\definecolor{myred}{HTML}{800606}
\definecolor{myblue}{HTML}{131D85}
\theoremstyle{plain}
\newtheorem{theorem}{Theorem}
\newtheorem{proposition}{Proposition}
\newcommand{\cD}{\mathcal{D}}
\newcommand{\cE}{\mathcal{E}}
\newcommand{\cL}{\mathcal{L}}
\newcommand{\cS}{\mathcal{S}}
\newcommand{\cU}{\mathcal{U}}
\newcommand{\bN}{\mathbb{N}}
\newcommand{\bR}{\mathbb{R}}
\newcommand{\bV}{\mathbb{V}}
\newcommand{\dV}[1][u]{\cU\left(\Omega;\bR^{d_v}\right)}
\newcommand{\dVt}[1][u]{\cU\left(\left[0,\infty\right)\times\Omega;\bR^{d_v}\right)}
\newcommand{\bx}{\mathbf{x}}
\newcommand{\by}{\mathbf{y}}
\newcommand{\bv}{\mathbf{v}}
\newcommand{\bb}{\mathbf{b}}
\newcommand{\bn}{\mathbf{n}}
\newcommand{\sgn}{\text{sgn}}
\DeclareMathOperator*{\argmin}{\arg\min}
\begin{document}

\begin{frontmatter}

%% Title, authors and addresses

\title{\textit{ReSDF}: Redistancing Implicit Surfaces using Neural Networks}

%% use the tnoteref command within \title for footnotes;
%% use the tnotetext command for the associated footnote;
%% use the fnref command within \author or \address for footnotes;
%% use the fntext command for the associated footnote;
%% use the corref command within \author for corresponding author footnotes;
%% use the cortext command for the associated footnote;
%% use the ead command for the email address,
%% and the form \ead[url] for the home page:
%%
%% \title{Title\tnoteref{label1}}
%% \tnotetext[label1]{}
%% \author{Name\corref{cor1}\fnref{label2}}
%% \ead{email address}
%% \ead[url]{home page}
%% \fntext[label2]{}
%% \cortext[cor1]{}
%% \address{Address\fnref{label3}}
%% \fntext[label3]{}

%% use optional labels to link authors explicitly to addresses:
%% \author[label1,label2]{<author name>}
%% \address[label1]{<address>}
%% \address[label2]{<address>}

\author[label1]{Yesom Park}
\ead{yeisom@snu.ac.kr}
\author[label1]{Chang hoon Song}
\ead{goldbach2@snu.ac.kr}
\author[label2]{Jooyoung Hahn}
\ead{jooyoung.hahn@stuba.sk}
\author[label1]{Myungjoo Kang\corref{cor1}}
\ead{mkang@snu.ac.kr}

\cortext[cor1]{corresponding author}
\address[label1]{Department of Mathematical Sciences, Seoul National University, Seoul, Republic of Korea}
\address[label2]{Department of Mathematics and Descriptive Geometry,  Faculty of Civil Engineering, Slovak University of Technology in Bratislava, Slovakia}
% \fntext[label2]{Department of Mathematics and Descriptive Geometry,  Faculty of Civil Engineering, Slovak University of Technology in Bratislava, Slovakia}

\begin{abstract}
This paper proposes a deep-learning-based method for recovering a signed distance function (SDF) of a given hypersurface represented by an implicit level set function. Using the flexibility of constructing a neural network, we use an augmented network by defining an auxiliary output to represent the gradient of the SDF. There are three advantages of the augmented network; (i) the target interface is accurately captured, (ii) the gradient has a unit norm, and (iii) two outputs are approximated by a single network. Moreover, unlike a conventional loss term which uses a residual of the eikonal equation, a novel training objective consisting of three loss terms is designed. The first loss function enforces a pointwise matching between two outputs of the augmented network. The second loss function leveraged by a geometric characteristic of the SDF imposes the shortest path obtained by the gradient. The third loss function regularizes a singularity of the SDF caused by discontinuities of the gradient. Numerical results across a wide range of complex and irregular interfaces in two and three-dimensional domains confirm the effectiveness and accuracy of the proposed method. We also compare the results of the proposed method with physics-informed neural networks approaches and the fast marching method.
\end{abstract}

\begin{keyword}
Signed distance function; Level set function; Reinitialization; Deep learning; Eikonal equation
%Science \sep Publication \sep Complicated
%% keywords here, in the form: keyword \sep keyword

%% MSC codes here, in the form: \MSC code \sep code
%% or \MSC[2008] code \sep code (2000 is the default)

\end{keyword}

\end{frontmatter}

%%
%% Start line numbering here if you want
%%
%\linenumbers

%% main text
\section{Introduction}
The signed distance function (SDF) to a hypersurface $\Gamma\subset \bR^n$, which is the distance to $\Gamma$ in the outer region and the negative of the distance to $\Gamma$ in the inner region, has been crucial in various fields, ranging from computational fluid dynamics \cite{olsson2005conservative, gibou2007level, roget2013wall} to image segmentation \cite{tsai2001model, alvino2007efficient, li2010distance}, 3D shape reconstruction from scattered point data \cite{taubin2012smooth, gropp2020implicit, sitzmann2020metasdf}, architectural geometry \cite{pottmann2010geodesic, novello2022exploring}, and robotic navigation \cite{kimmel1998multivalued, lee2016structured}. After being devised by Osher and Sethian \cite{osher1988fronts}, the level set method, which represents $\Gamma$ as the zero level set of a continuous function $\phi:\bR^n\rightarrow \bR$, provides a numerical and theoretical paradigm of evolving hypersurfaces. To facilitate geometric features such as the normal vector and mean curvature of the interface and to reduce numerical instability, the level set function is preferable to be neither too flat nor steep near its zero contours. Reinitializing it as the SDF has been a common numerical treatment in modelling the motion of dynamic interfaces \cite{sussman1999efficient, strain1999semi, kang2000boundary,  cho2022solving} and shape optimization \cite{osher2001level, allaire2002level, wang2007extended, van2013level}.

There have been several numerical methods to re-distance the given level set function $\phi$. One of the most prominent efforts is built on the fact that the SDF is a solution of partial differential equations (PDEs) \cite{peng1999pde, min2010reinitializing}. Fast marching methods (FMMs) \cite{sethian1996fast,kimmel1998computing,sethian2000fast,hassouna2007multistencils,yang2017highly} and fast sweeping methods \cite{zhao2005fast, qian2007fast, li2008second} recover the SDF as a viscosity solution to the eikonal equation. Sussman et al. \cite{sussman1994level} reformulated the eikonal equation by a pseudo-time-dependent nonlinear hyperbolic PDE. This approach is known to be more suitable than solving directly the eikonal equation in the case of evolving interfaces. Since the SDF \cite{sussman1994level} is obtained by the stationary solution, it is time-consuming and requires a large number of iterations depending on the CFL restriction. A more serious issue is that the zero level set fails to be maintained. Lee et al. \cite{lee2017revisiting} propose a fast method by using the Hopf-Lax formula of the Hamilton-Jacobi equation. In \cite{barles1993front}, it is known that the solution of the Hamilton-Jacobi equation in \cite{lee2017revisiting} is not exactly the SDF. Another approach \cite{belyaev2015variational} employs Varadhan's distance function \cite{varadhan1967behavior}. By the Hopf-Cole formula, it can be transformed into a regularized eikonal equation with an artificial viscosity term.

Inspired by the tremendous success of deep learning in diverse machine learning tasks, such as image classification \cite{simonyan2014very, nath2014survey, he2016deep, choi2019attention, dosovitskiy2020image} and density estimation \cite{kingma2013auto, goodfellow2020generative, song2020score, chen2018neural}, the use of neural networks for solving PDEs has begun to attract significant attention in recent years. Pioneering studies \cite{lee1990neural, lagaris1998artificial} incorporate physical principles into neural networks by directly constructing a loss function as a residual of PDEs and errors of boundary or initial conditions. Building upon these earlier works, Raissi et al. \cite{raissi2019physics} have revisited them by using modern computational tools and provided a framework named physics-informed neural networks (PINNs). The advantage of PINNs is that they can be readily transformed into various problems \cite{jin2021nsfnets, hu2022discontinuity, patel2022thermodynamically} and can treat the PDE in a fully mesh-free and time-continuous manner.
However, they suffer from a challenging optimization landscape \cite{wang2021understanding, krishnapriyan2021characterizing}, as it is difficult for them to learn many multi-scale or complex PDE systems \cite{fuks2020limitations, wang2022and}. Another line of work involves neural operators \cite{lu2019deeponet,li2020fourier,kovachki2021neural}, which unveil physical systems from data by learning implicit solution operator that maps boundary or initial conditions to solutions. They have shown promise in learning complex PDEs \cite{wen2022u, lu2022comprehensive}, but the requirement for large amounts of available data limits application to various of practical problems. In addition, hybrid methods \cite{fang2021high, qiu2021cell, lienen2022learning, karlbauer2022composing} that combine deep learning with well-grounded numerical methods have been studied.

\begin{figure*}[t]
    \centering
    \includegraphics[page=1,width=0.9\textwidth]{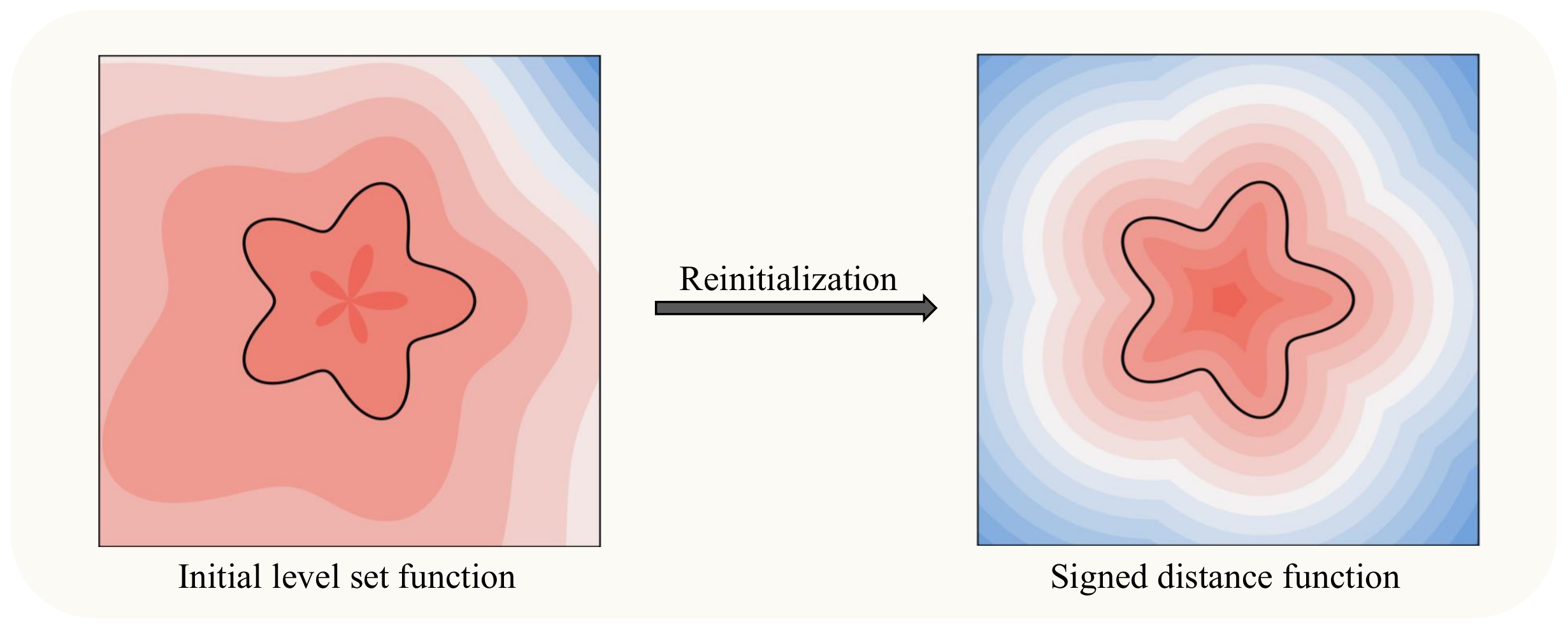}
    \caption{Concept of the level set reinitialization. The interface $\Gamma$ is presented by black solid lines. The left is an iso-contour plot of the given level set function $\phi$ and the right is the iso-contours of the SDF after the reinitialization of $\phi$.}
    \label{fig:concept}
\end{figure*}

In this paper, we propose a novel method, named \textit{ReSDF}, for reconstructing a SDF from a given implicit level set function $\phi$ whose zero level set is an interface $\Gamma$; see Figure \ref{fig:concept}. By exploiting flexibility of network design and optimization objectives, we compute the SDF through two major ideas. Firstly, inspired by the variable-splitting scheme \cite{wang2008new, goldstein2009split}, we introduce an augmented neural network that parametrizes the gradient of the SDF as an auxiliary variable while keeping the number of network parameters. The network is designed so that the approximating SDF accurately distinguishes the interface from the interior and the outer regions and the gradient has a unit norm. Secondly, we propose a training objective consisting of three loss terms. The first loss function comes from the splitting method and it enforces matching vectors between the gradient of the estimated SDF and the auxiliary gradient. The second loss function imposes a geometric fact that the gradient of the SDF defines the shortest path to the interface. The third loss function is devised to alleviate the nonuniqueness of the gradient caused by multiple shortest paths at a singular point of the SDF. We also provide a theoretical validation of the proposed objectives for reinitializing a level set function. Numerical results confirm that proposed loss functions in conjunction with the designed neural network significantly improve the accuracy compared to the existing PINN approach. The \textit{ReSDF} also accurately approximates the SDF for the cases of complex and irregular interfaces without tuning sensitive hyper-parameters. The capability of the model is also tested to estimate the distance function on three-dimensional space without increment in the number of parameters of the network.

The rest of the paper is organized as follows. In Section 2, we present the formulation of the problem and a brief discussion about prior works. In Section 3, we introduce the proposed ReSDF by describing the augmented network and the objective function in detail.
Numerical experiments are presented in Section 4 to demonstrate the effectiveness and accuracy of ReSDF, followed by some concluding remarks given in Section 5.

%%%%%%%%%%%%%%%%%%%%%%%%%%%%%%%%%%%%%%%%%%%%%%%%%%%%%%%%%%%%%%

\section{Previous Works} \label{sec:pre_work}

Let $\Omega\subset \bR^n$ be a domain and $\Gamma\subset \Omega$ be a compact hypersurface implicitly represented by a zero level set of a continuous level set function $\phi:\Omega\rightarrow \bR$. The hypersurface $\Gamma$ divides $\Omega$ into two disjoint open subsets: the outer region $\Omega^+=\left\{\bx\in\Omega \mid \phi\left(\bx\right)>0\right\}$ and the inner region $\Omega^-=\left\{\bx\in\Omega \mid \phi\left(\bx\right)<0\right\}$ satisfying $\Omega \setminus \Gamma =\Omega^+ \sqcup \Omega^-$. From the given function $\phi$, the goal is to find the signed distance function (SDF) $u:\Omega\rightarrow \bR$ that satisfies 
\begin{equation}
u\left(\bx\right)=\begin{cases}
d\left(\bx,\Gamma\right) & \text{in }\Omega^{+}\\
0 & \text{on }\Gamma\\
-d\left(\bx,\Gamma\right) & \text{in }\Omega^{-},
\end{cases}
\end{equation}
where $d\left(\bx,\Gamma\right)=\underset{\by\in\Gamma}{\min}\parallel \bx-\by\parallel$ denotes the standard Euclidean distance function to $\Gamma$. It is a unique viscosity solution to the eikonal equation \cite{crandall1984two}
\begin{align}
\parallel \nabla u \parallel  &= 1, \label{eq:eikonal} \\
\sgn\left(u\right)  &=\sgn\left(\phi\right),  \label{eq:sign_cond}
\end{align}
where $\phi$ is the given level set function and $\sgn$ is the signum function, which takes either $1$, $0$, or $-1$ for points in $\Omega^+$, $\Gamma$, or $\Omega^-$, respectively.

The variational approaches to approximate the distance function~\eqref{eq:eikonal} in \cite{li2005level, xin2012global, alblas2023going} are to minimize the energy functional related to the eikonal equation:
\begin{equation} \label{eq:energy_eikonal}
    \cE\left(u\right)=\int_\Omega \left(\left\Vert \nabla u\right\Vert - 1\right)^2
\end{equation}
in conjunction with the Dirichlet boundary condition $u\left(\Gamma\right)=0$ as a constraint relaxed by a penalty term:
\begin{equation}\label{eq:eik_variational}
\underset{u}{\min} \ \cE\left(u\right) + \int_\Gamma \left\vert u \right\vert.
\end{equation}
Several efforts have also been made to improve the ill-posedness or convergence of the variational problem \eqref{eq:energy_eikonal}, including penality methods introducing external energy functionals for shifting the zero level set toward the target interface \cite{li2010distance} and for preserving the shape of the free interface \cite{basting2013minimization}, and effective splitting schemes \cite{belyaev2015variational, belyaev2020admm} using alternating direction method of multipliers \cite{boyd2011distributed}.

As neural PDE surrogates have proliferated as an impactful area of research, several efforts have been made to reconstruct the SDF using neural networks. Prior studies mostly resort to the steady eikonal equation to find the SDF. Lichtenstein et al. \cite{lichtenstein2019deep} propose a hybrid method that integrates neural networks into the FMM \cite{sethian1996fast}. They replace the local numerical solver in the FMM with a neural network trained from data. The accuracy of the numerical solution is improved by leveraging the expressive power of neural networks. Since the approach requires a wealth of training data with true distance values, it might be difficult to be used in application where lots of true distances are hard to obtain. The performance relies on the amount of available data and would be unfavourable outside of the data on which the network is trained. Another approach directly uses a neural network to parametrize the SDF. Gropp et al. \cite{gropp2020implicit} adopt the PINN approach to learn the SDF from an unorganized cloud of points. They directly use a neural network to parametrize the SDF of the given point cloud by minimizing the eikonal-embedded loss function.

Recent works \cite{fayolle2021signed, bin2021pinneik} leverage a framework of PINN \cite{raissi2019physics}. Similar to the variational approaches \eqref{eq:energy_eikonal}, they convert the problem of solving the eikonal equation \eqref{eq:eikonal} into an optimization problem in which the loss function embeds the knowledge of \eqref{eq:eikonal}:
\begin{equation}\label{eq:pinn_eikonal}
\cL_{Eik}\left(\theta\right)=\frac{1}{\mid \cD \mid}\sum_{\bx\in\cD}\Bigl( \| \nabla u_\theta\left(\bx\right) \| - 1 \Bigl)^2 + \lambda \cL_{R}\left(\theta\right),
\end{equation}
where $u_\theta$ is a neural network parametrized by $\theta$, which is an approximator of the solution to \eqref{eq:eikonal}. The soft penalty term $\cL_{R}$ enforces an additional constraint on the solution such as weight normalization or boundary conditions. The expectation is taken with respect to a collection $\cD$ of scattered collocation points usually chosen by uniform random sampling. The objective of the residual of the eikonal equation characterizes the deviation of $u_\theta$ from the SDF. The trained network $u_\theta\left(\bx\right)$ serves as a suitable approximation of the solution. Fayolle \cite{fayolle2021signed} also suggests an alternative PINN-based approach that relies on $p$-Poisson distances \cite{manfredilimits}. As discussed in the numerical section, we compare the proposed method with the results of the PINN-based approach for irregular and complex interfaces and check the robustness of using various initial level set functions $\phi$. 
%%%%%%%%%%%%%%%%%%%%%%%%%%%%%%%%%%%%%%%%%%%%%%%%%%%%%%%%%%%%%%

\section{Proposed Method} \label{sec:method}
In this section, we propose a learning-based approach to recover the SDF (\textit{ReSDF}) of a given hypersurface implicitly represented by a level set function. In order to increase the expressiveness of the network we use an augmented network that parameterizes the gradient of SDF as an auxiliary output while keeping the number of parameters. Moreoever, novel objectives are designed to exploit a global property and alleviate a singularity of the SDF by harnessing the geometric properties of the SDF.

 \subsection{Augmented Network Representation} \label{sec:net}
 
 \begin{figure*}
    \centering
    \includegraphics[page=1,width=0.97\textwidth]{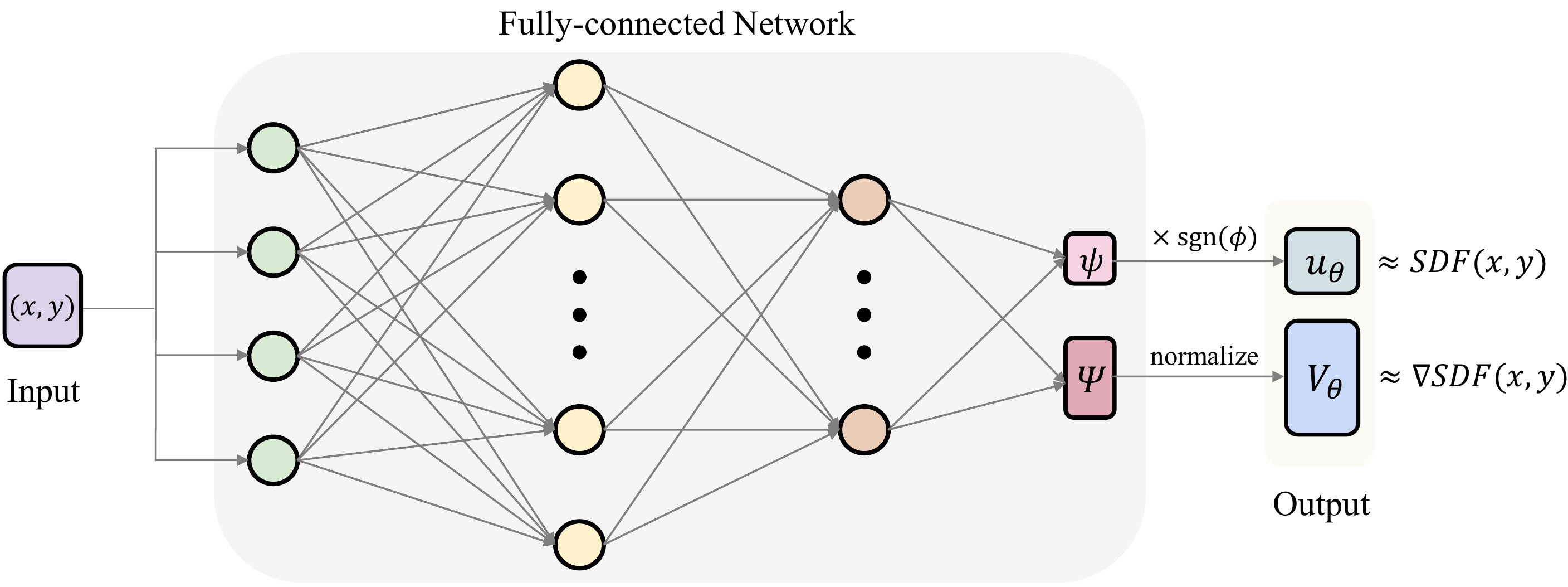}
    \caption{The \textit{ReSDF} model architecture is visualized. The input $\left(x,y\right)$ passes through the shared shallow fully-connected network on the shaded region. The \textit{ReSDF} parametrizes $u_\theta=\sgn\left(\phi\right)\psi$ as an ansatz of the SDF together with the auxiliary output $V_\theta=\Psi / \mid \Psi\mid$ which approximates the gradient of the SDF.}
    \label{fig:network}
\end{figure*}

In this section, inspired by variable-splitting methods \cite{wang2008new, goldstein2009split} in optimization, an augmented network structure is considered to separately parametrize the gradient of the SDF as an auxiliary variable. The augmented network has two outputs that satisfy the following characteristics: (i) the primary output $u_\theta$ approximates the SDF and it automatically vanishes on the interface as well as satisfies the sign condition \eqref{eq:sign_cond} and (ii) the gradient field represented by the auxiliary output $V_\theta$ has a unit norm; see Figure \ref{fig:network}.

We parameterize a single network $N_\theta :\bR^n\rightarrow\bR \times \bR^{n}$ to output $\psi_\theta\left(\bx\right)\in \bR$ together with an auxiliary value $\Psi_{\theta}\left(\bx\right)\in\bR^n$
\begin{equation} \label{eq:AugNet}
N_\theta\left(\bx\right)=\Bigl( \psi_\theta\left(\bx\right), \Psi_{\theta}\left(\bx\right) \Bigl).
\end{equation}
because it is more efficient to approximate the SDF and its gradient using a single common network rather than learning with two individual networks. The first scalar function $\psi_\theta$ and the other vector-valued component $\Psi_\theta$ are employed to parametrize $u_\theta$ and $V_\theta$, respectively, through a  designed architecture in Figure \ref{fig:network}. The neural perceptron $\tilde{\Psi}_\theta$ is defined using a multi-layer fully-connected neural network:
\[
\tilde{\Psi}_\theta\left(\bx\right)= W\left(f_L\circ \cdots \circ f_0\left(\bx\right)\right) + \bb,\ \bx\in\bR^n,
\]
where $L\in\bN$ is a given depth, $W\in\bR^{n+1\times d_L}$ is a weight of the output layer, $\bb\in \bR^{n+1}$ is a final bias vector and the perceptron (also known as the hidden layer) $f_{\ell}:\bR^{d_{\ell-1}} \rightarrow \bR^{d_{\ell}}$ is defined by 
\[
f_{\ell}\left(\by\right)=\sigma\left(W_\ell \by + \bb_\ell\right),\ \by\in\bR^{d_{\ell-1}},\ \text{for all } \ell=0,\dots,L,
\]
for $W_\ell\in \bR^{d_{\ell}\times d_{\ell-1}}$, $\bb_\ell\in\bR^{d_\ell}$, and a non-linear activation function $\sigma$. The dimensions $d_\ell$ of the hidden layers are also called by the width of the network. As the input to the network is a point $\bx$ in $\Omega$, the input dimension of the input layer $f_0$ is $d_0=n$.
The output layer recovers the $(n+1)$-dimensional output values using the matrix product between the output of the final hidden layer $f_L$ and $W$ in addition to a bias vector $\bb$. A shorthand notation $\theta$ is used for all parameters in the weights $\left\{W, W_0,\cdots, W_L\right\}$ and biases $\left\{\bb, \bb_0,\cdots ,\bb_L\right\}$.
Given the current parameter configuration, the parameters $\theta$ are successively adapted by minimizing an assigned loss function explained in Section \ref{sec:objective}.

\paragraph{\textbf{Representation of SDF}}
We aim to adjust the network output $\psi_\theta$ so that the approximated SDF satisfies the sign condition \eqref{eq:sign_cond}. Thanks to the level set representation, the sign of the SDF is easily obtained. Similar to \cite{fayolle2021signed}, we treat the condition on the sign of the SDF as a hard constraint by parameterizing the primary output as
\begin{equation}\label{eq:net_u}
u\left(\bx\right)\approx u_{\theta}\left(\bx\right)\coloneqq \cS\left(\phi\left(\bx\right)\right)\left|\psi_{\theta}\left(\bx\right)\right|,
\end{equation}
where the quantity $\cS\left(\phi\left(\bx\right)\right)$ is a smoothed sign function of $\phi\left(\bx\right)$:
\[
\cS\left(y\right) = \gamma\tanh\left(\beta y\right),\ y\in\bR
\]
with a scaling factor $\gamma = 1.4\approx 1/\tanh 1$ and a smoothing parameter $\beta = 70$ are fixed in all examples. Clearly, the designed artificial neural network as an ansatz of the SDF automatically brings the sign condition, $\sgn\left(u_\theta\right)=\sgn\left(\phi\right)$. In particular, $u_\theta$ vanishes on the target interface $\Gamma$. Therefore, the zero-level set of the approximated SDF can accurately preserve the interface without smearing out.

\paragraph{\textbf{Approximation of the gradient field}}
In contrast to the existing approaches, which treat the unit norm property of the gradient as a soft constraint, we design the augmented output
\[
\nabla u\left(\bx\right) \approx V_\theta\left(\bx\right) : \bR^n\rightarrow S^{n-1}
\]
to automatically satisfy the unit norm constraint.
To this end, we define a neural network that lies in the unit sphere $S^{n-1}$ by normalizing the auxiliary output $\Psi$:
\begin{equation}\label{eq:netV}
V_\theta\left(\bx\right) \coloneqq \frac{\Psi_\theta\left(\bx\right)}{\mid\Psi_\theta\left(\bx\right)\mid}.
\end{equation}
In other words, $\Psi_\theta$ learns only the direction of the gradient of SDF and it is automatically adjusted to the unit length by normalization.

To sum up, the SDF and its gradient are estimated from the output of the augmented network \eqref{eq:AugNet} as 
\begin{align}
u\left(\bx\right) & \approx u_{\theta}\left(\bx\right)\coloneqq\cS\left(\phi\left(\bx\right)\right)\left|\psi_{\theta}\left(\bx\right)\right|, \\
\nabla u\left(\bx\right) & \approx V_\theta\left(\bx\right) \coloneqq \frac{\Psi_\theta\left(\bx\right)}{\mid\Psi_\theta\left(\bx\right)\mid}.
\end{align}
The resultant $u_\theta$ vanishes on $\Gamma$ and $V_\theta$ has unit length. Moreover, it is worth emphasizing that $u_\theta$ and $V_\theta$ share the same weights and biases from the input layer till the last hidden layer. This enables a reduction in the number of parameters and faster training than using two separate networks. 

\subsection{Design of loss functions}\label{sec:objective}
A formulation of the loss function is another key ingredient to achieve effective learning using two outputs of the augmented networks \eqref{eq:AugNet}. In this section, we explain the following objective function consisting of three loss terms:
\begin{equation}\label{eq:total}
\cL(u, V) = 
\int_\Omega \left\Vert \nabla u\left(\bx\right) -V\left(\bx\right)\right\Vert^2 
+ \int_\Omega \left\vert \phi\left( \bx-u\left(\bx\right)V\left(\bx\right) \right)\right\vert ^2
+ \int_\Omega \left\Vert V\left(\bx\right)- V\left( \bx-\eta u\left(\bx\right)V\left(\bx\right) \right)\right\Vert ^2,
\end{equation}
where $\eta > 0$ is a fixed constant. The main characteristics are summarized:
\begin{itemize}
 \item  The first term enforces the vector matching between the gradient of the primary output and the auxiliary output.
 \item The second term imposes a global property using the fact that the SDF and its gradient determine the shortest path to the interface.
 \item The third term alleviates a singularity where the SDF is not differentiable.
\end{itemize}
In \ref{appen:analysis}, it is proved that the SDF is a minimizer of the functional $\cL (u, V)$. 
The Euler-Lagrange equation for the functional $\cL (u, V)$ is nonlinear and it is not straightforward to find a minimizer by conventional methods. It highlights one of the main advantages of the deep learning approach to minimize such a complex loss function.

\paragraph{\textbf{Gradient matching objective}} The gradient matching (GM) objective directly enforces to reduce a gap between the gradient of the primary output $u_\theta$ and the unit-norm auxiliary output $V_\theta$ in \eqref{eq:AugNet}:
\begin{equation}\label{eq:loss1}
   \cL_{\text{GM}}\left(\theta\right)=\frac{1}{\mid \cD \mid}\sum_{\bx\in\cD} \| \nabla u_\theta\left(\bx\right) - V_\theta\left(\bx\right)\|^2.
\end{equation}
The gradient of $u_\theta$ in \eqref{eq:loss1} by auto-differentiation library (\texttt{autograd}) \cite{paszke2017automatic} calculates the exact derivatives of the networks. In the view of the variable-splitting method, the variational problem \eqref{eq:eik_variational} is reformulated by
\[
\underset{u,\ V}{\min}\int_\Gamma \left\vert u \right\vert + \int_\Omega \left(\left\Vert V \right\Vert - 1\right)^2,\ \text{subject to } \nabla u=V.
\]
By regularizing the constraint as a penalty term, an unconstrained problem is presented:
\begin{equation}\label{eq:HQS}
\underset{u,\ V}{\min}\int_\Gamma \left\vert u \right\vert + \int_\Omega \left(\left\Vert V \right\Vert - 1\right)^2 + \lambda\int_\Omega \left\Vert \nabla u - V\right\Vert^2,
\end{equation}
where $\lambda>0$ is a regularization parameter. Since $u_\theta$ automatically vanishes on $\Gamma$ in the proposed network structure in Section \ref{sec:net}, the first loss term in \eqref{eq:HQS} is no longer needed. The network structure in which the $V_\theta$ is designed to have a unit norm allows us to exclude the second term in \eqref{eq:HQS}. This leads us to consider $\cL_{\text{GM}}$ \eqref{eq:loss1}. Since the existing PINN methods are used to learn the eikonal equation with a single network output, the network must learn the direction of the gradient while matching its norm to be one. In the matching loss function \eqref{eq:loss1}, $\Psi_\theta$ learns the direction of the gradient of the SDF and $\nabla u_\theta$ is matched pointwise to the normalized vector $V_\theta$ of $\Psi_\theta$.

\paragraph{\textbf{Shortest path objective}}
The shortest path (SP) objective explains how the gradient of SDF makes the shortest path to the interface $\Gamma$:
\begin{equation}\label{eq:loss2}
   \cL_{\text{SP}}\left(\theta\right)=\frac{1}{\mid \cD \mid}\sum_{\bx\in\cD}\left\vert \phi\left( \bx-u_\theta\left(\bx\right)V_\theta\left(\bx\right) \right)\right\vert ^2.
\end{equation}
The loss function $\cL_{\text{SP}}$ specifies a relations between the interface $\Gamma$ and the points away from $\Gamma$ by using the gradient of the SDF. The following proposition delivers a justification of using \eqref{eq:loss2} in the view of geometric property of the SDF.
\begin{proposition}\label{prop:SDF} \cite{dugundji1966topology,federer2014geometric}
Let $\Gamma\subset \Omega$ be a compact hypersurface of a domain $\Omega\subset \bR^n$. Suppose $u:\Omega\rightarrow\bR$ is the signed distance function to $\Gamma$. Then, $u$ is differentiable except on a set of zero measure. Moreover, for any point $\bx\in\Omega$ where $u\left(\bx\right)$ is differentiable, it satisfies
\begin{equation}\label{eq:condition1}
\bx_\Gamma = \bx-u\left(\bx\right)\nabla u\left(\bx\right)\in\Gamma,
\end{equation}
where $\bx_\Gamma = \argmin_{\mathbf{y} \in \Gamma} d(\bx, \Gamma)$.
\end{proposition}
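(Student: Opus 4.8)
The statement has two independent parts: the almost-everywhere differentiability of $u$, and the closest-point identity at points of differentiability. The plan is to reduce both to elementary properties of the Euclidean distance function $d(\slot,\Gamma)$, since $u=\sgn(\phi)\,d(\slot,\Gamma)$ and the two assertions are local away from $\Gamma$.

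First I would record that $u$ is $1$-Lipschitz on $\Omega$. Within each of $\Omega^+$ and $\Omega^-$ this is immediate from the reverse triangle inequality $|d(\bx,\Gamma)-d(\by,\Gamma)|\le \|\bx-\by\|$; for a pair $\bx\in\Omega^+$, $\by\in\Omega^-$ one inserts a crossing point $\bz\in\Gamma$ of the segment $[\bx,\by]$ (which exists because $\phi$ is continuous) and estimates $|u(\bx)-u(\by)| = d(\bx,\Gamma)+d(\by,\Gamma)\le \|\bx-\bz\|+\|\bz-\by\| = \|\bx-\by\|$. With $u$ Lipschitz on $\Omega$, Rademacher's theorem yields differentiability at every point outside a Lebesgue-null set, which settles the first claim.

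For the identity, fix a point $\bx$ at which $u$ is differentiable; by symmetry I may assume $\bx\in\Omega^+$ (on $\Gamma$ the formula is trivial since $u(\bx)=0$, and $\Omega^-$ is handled by the same computation with the sign reversed). Compactness of $\Gamma$ guarantees a nearest point $\by^\ast\in\Gamma$ with $u(\bx)=\|\bx-\by^\ast\|>0$; set $\bv=(\bx-\by^\ast)/\|\bx-\by^\ast\|$. Since $\Omega^+$ is open, $\bx+t\bv\in\Omega^+$ for all small $|t|$, and the definition of $d$ as a minimum over $\Gamma$ gives $u(\bx+t\bv)\le \|\bx+t\bv-\by^\ast\| = u(\bx)+t$. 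Dividing by $t$ and letting $t\to 0^+$ and $t\to 0^-$ separately produces $\nabla u(\bx)\cdot\bv\le 1$ and $\nabla u(\bx)\cdot\bv\ge 1$, hence $\nabla u(\bx)\cdot\bv = 1$.

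To finish, I would combine this with the Lipschitz bound $\|\nabla u(\bx)\|\le 1$: since $\bv$ is a unit vector and $\nabla u(\bx)\cdot\bv=1$, the equality case of Cauchy--Schwarz forces $\nabla u(\bx)=\bv=(\bx-\by^\ast)/u(\bx)$. Rearranging gives $\by^\ast = \bx-u(\bx)\nabla u(\bx)$, which is the asserted identity and, as a by-product, shows the nearest point is unique (it is pinned down by the gradient). The only delicate point is the two-sided directional-derivative estimate: one must keep $\bx+t\bv$ on the correct side of $\Gamma$ so that $u=d$ there, and verify $\|\bx+t\bv-\by^\ast\| = u(\bx)+t$ rather than its absolute value for small negative $t$, which holds precisely because $u(\bx)>0$. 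Everything else follows directly from compactness, the Lipschitz property, and Cauchy--Schwarz.
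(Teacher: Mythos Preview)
Your proof is correct and self-contained. The paper itself does not supply a proof of this proposition; it is stated with citations to Dugundji and Federer and then used as a black box to motivate the shortest-path loss $\cL_{\text{SP}}$. Your argument---Lipschitz continuity plus Rademacher for the first part, and the one-sided directional-derivative squeeze combined with the equality case of Cauchy--Schwarz for the second---is the standard elementary route and fills in exactly what the paper delegates to the references. The only minor remark is that the crossing-point step in your Lipschitz argument tacitly assumes the segment $[\bx,\by]$ stays in $\Omega$; this is automatic if $\Omega$ is convex (as in all of the paper's examples) but would otherwise need a local version, which is harmless since Lipschitz is a local property and Rademacher only needs it locally.
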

The property \eqref{eq:condition1} describes the shortest path of a point $\bx$ to the target interface. The shortest path explains that a point $\bx\in\Omega$ came from a point on the interface $\bx_\Gamma = \argmin_{\mathbf{y} \in \Gamma} d(\bx, \Gamma)$, that is, the closest point to $\bx$ on $\Gamma$. Note that $\cL_{\text{GM}}$ \eqref{eq:loss1} matches a local relation between the gradient of $u_\theta$ and $V_\theta$ pointwisely. On the other hand, the shortest path loss function $\cL_{\text{SP}}$ dictates non-local relation between points and directly enforces $u_\theta$ to satisfy the geometric property along with $V_\theta$. With a help of a given function $\phi$ whose zero level set represents $\Gamma$, we have
\begin{equation} \label{eq:back_to_interface}
    \bx-u\left(\bx\right)\nabla u\left(\bx\right)\in\Gamma \iff \phi\left( \bx-u\left(\bx\right)\nabla u\left(\bx\right) \right) = 0,
\end{equation}
then $\left\vert\phi\left( \bx-u_\theta\left(\bx\right)\nabla u_\theta\left(\bx\right) \right)\right\vert$ in the loss function \eqref{eq:loss2} is a reasonable choice and train it to approach zero. Note that this loss function requires the derivative of $u_\theta$ with respect to the spatial variable $\bx$ and the computed gradient is multiplied by $u_\theta$ again. It is a challenging optimization because the gradient calculation is deep and the loss landscape is complex. However, as we have $V_\theta$ as an approximation of the gradient of the SDF, we replace $\nabla u_\theta$ with $V_\theta$ in \eqref{eq:back_to_interface}. By replacing $\nabla u_\theta$ with $V_\theta$, the chain rule for computing the gradients of the objective \eqref{eq:loss2} is simpler than using $\nabla u_\theta$.

\paragraph{\textbf{Regularizing singularity objective}}
The regularizing singularity (RS) objective resolves a singularity of the SDF:
\begin{equation}\label{eq:loss3}
\cL_{\text{RS}}\left(\theta\right)=\frac{1}{\mid \cD \mid}\sum_{\bx\in\cD}\left\Vert V_\theta\left(\bx\right)- V_{\theta}\left( \bx-\eta u_\theta\left(\bx\right)V_\theta\left(\bx\right) \right)\right\Vert ^2,
\end{equation}
where $\eta > 0$. At a singular point $\bx \in \Omega$, since there are multiple closest points $\bx_\Gamma$ \eqref{eq:condition1} on the interface, the gradient of the SDF at $\bx$ is not unique. In \eqref{eq:loss3}, we detour the singularity of the SDF by defining $V_\theta\left(\bx\right)$ at a singular point $\bx$ as a reliable gradient value near the interface $V_\theta\left(\bx - t\nabla u_\theta\left(\bx\right)\right)$ for some $t$. The idea is supported by the following proposition.
\begin{proposition}\label{prop:HJB}
Let $\Gamma\subset \Omega$ be a compact hypersurface of a domain $\Omega\subset \bR^n$ and  $u:\Omega\rightarrow\bR$ the signed distance function to $\Gamma$. For any point $\bx\in \Omega$ where $u$ is differentiable, the gradient of $u$ is constant along the ray $s\left(t\right)$ emanating from $\bx$ in direction $\nabla u\left(\bx\right)$:
\begin{align*}
    s\left(t\right)=\bx-t \sgn \left(u\left(\bx\right)\right)\nabla u\left(\bx\right),\ \forall\ t\in\left[0,\left\vert u\left(\bx\right)\right\vert\right).
\end{align*}
\end{proposition}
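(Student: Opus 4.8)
The plan is to show that as we slide from $\bx$ toward the interface along the ray $s(t)$, the nearest point on $\Gamma$ never changes, which immediately forces the gradient to stay constant. First I would dispose of the degenerate cases: if $u(\bx)=0$ then $\bx\in\Gamma$ and the interval $[0,|u(\bx)|)$ is empty, while the case $u(\bx)<0$ reduces to $u(\bx)>0$ by flipping signs, so I assume $u(\bx)>0$ and hence $\sgn(u(\bx))=1$. Since $u$ is differentiable at $\bx$, the nearest point $\bx_\Gamma=\argmin_{\by\in\Gamma}\|\bx-\by\|$ is unique, and Proposition~\ref{prop:SDF} combined with the eikonal identity $\|\nabla u(\bx)\|=1$ gives $\bx-\bx_\Gamma=u(\bx)\nabla u(\bx)$ and $\|\bx-\bx_\Gamma\|=u(\bx)=d(\bx,\Gamma)$; in particular $\nabla u(\bx)=(\bx-\bx_\Gamma)/\|\bx-\bx_\Gamma\|$.

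Next I would rewrite the ray as a convex combination, $s(t)=\bx-t\nabla u(\bx)=(1-t/u(\bx))\bx+(t/u(\bx))\bx_\Gamma$, so that $s(t)$ lies on the segment joining $\bx$ to $\bx_\Gamma$ and $\|\bx-s(t)\|=t$. The key step, and the crux of the argument, is to prove that for each $t\in[0,u(\bx))$ the point $\bx_\Gamma$ remains the \emph{unique} nearest point of $s(t)$ on $\Gamma$, with $d(s(t),\Gamma)=u(\bx)-t$. The upper bound is immediate, since $\|s(t)-\bx_\Gamma\|=(1-t/u(\bx))\|\bx-\bx_\Gamma\|=u(\bx)-t$. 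For the lower bound and uniqueness, suppose some $\by\in\Gamma$ satisfies $\|s(t)-\by\|\le u(\bx)-t$; the triangle inequality then yields $\|\bx-\by\|\le\|\bx-s(t)\|+\|s(t)-\by\|\le t+(u(\bx)-t)=u(\bx)=d(\bx,\Gamma)$, which forces equality throughout, so $\by$ is also a nearest point of $\bx$ and therefore $\by=\bx_\Gamma$ by uniqueness at $\bx$.

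Finally, uniqueness of the nearest point of $s(t)$ guarantees that $u$ is differentiable at $s(t)$, so applying Proposition~\ref{prop:SDF} together with $\|\nabla u\|=1$ at $s(t)$ (in place of $\bx$) gives
\[
\nabla u(s(t))=\frac{s(t)-\bx_\Gamma}{\|s(t)-\bx_\Gamma\|}=\frac{(1-t/u(\bx))(\bx-\bx_\Gamma)}{u(\bx)-t}=\frac{\bx-\bx_\Gamma}{u(\bx)}=\nabla u(\bx),
\]
which is exactly the claim. The main obstacle is the middle step: everything rests on the triangle-inequality argument that a closer competitor for $s(t)$ on $\Gamma$ would produce a closer competitor for $\bx$, contradicting the minimality of $\bx_\Gamma$. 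The strict endpoint $t<u(\bx)$ is essential here, since at $t=u(\bx)$ the ray meets $\Gamma$ at $\bx_\Gamma$, where $u$ has a kink and is generally not differentiable. I would also emphasize that the uniqueness of the nearest point of $s(t)$ is precisely what licenses invoking the differentiability and the gradient formula of Proposition~\ref{prop:SDF} at the interior points of the ray.
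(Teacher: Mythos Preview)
Your argument is correct and proceeds along a genuinely different line from the paper's. The paper argues analytically: it sets $g_{\bx}(t)=u(\bx-t\nabla u(\bx))$, observes that $g_{\bx}(0)=u(\bx)$ and $g_{\bx}(u(\bx))=0$ by Proposition~\ref{prop:SDF}, and that $|g_{\bx}'(t)|=|\nabla u(\bx)\cdot\nabla u(s(t))|\le 1$ by Cauchy--Schwarz and the eikonal equation. Since $g_{\bx}$ drops by exactly $u(\bx)$ over an interval of length $u(\bx)$ with slope bounded by $1$, equality must hold in Cauchy--Schwarz, forcing $\nabla u(s(t))$ to be parallel to $\nabla u(\bx)$ and hence equal to it.

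Your route is instead purely geometric: you show directly, via the triangle inequality, that $\bx_\Gamma$ remains the \emph{unique} nearest point of every $s(t)$, and then read off the gradient from the nearest-point formula. This has the advantage of making the differentiability of $u$ along the open ray explicit rather than tacit: you invoke the classical fact that uniqueness of the foot point implies differentiability of the distance function, which the paper's proof uses implicitly when it writes $\nabla u(s(t))$ without comment. Note, however, that this implication is not part of Proposition~\ref{prop:SDF} as stated (that proposition only gives the forward direction), so you should cite it separately (e.g.\ Federer's unique-footpoint theorem). The paper's argument is shorter and avoids the nearest-point machinery, but at the cost of leaving the differentiability of $u$ at interior ray points unaddressed; your version trades brevity for a more self-contained justification of that point.
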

The proof is provided in \ref{appen:prop2}. In practice, as in $\cL_{\text{SP}}$, we replace $\nabla u_\theta$ with $V_\theta$ and we set $\eta = 0.99$ so that the point gets close to the target interface having reliable gradient value.
The loss function $\cL_{\text{RS}}$ enables accurate and stable training of $V_\theta$ and provides an appropriate approximation of SDF for various irregular interfaces.

To sum up, the proposed \textit{ReSDF} optimizes the augmented network through the objective
\begin{equation}\label{eq:total_loss}   
\cL_{\text{total}}\left(\theta\right) =\cL_{\text{GM}}\left(\theta\right) + \cL_{\text{SP}}\left(\theta\right) + \cL_{\text{RS}}\left(\theta\right).
\end{equation}
It can be extended to the weighted sum of each loss term where the positive regularization parameters balance the role of each. We may impose a larger weight on the larger component to accelerate its convergence. A more provable alternative to hand-tuned weights is the use of adaptive regularization parameter methods \cite{leung1999adaptive, wang2021understanding, xiang2022self}. However, in contrast to most PINN methods, which are sensitive to the regularization parameter, 
we experimentally confirm that the objective \eqref{eq:total_loss} is transferable across a wide variety of interfaces $\Gamma$ and level set functions $\phi$.

\subsection{Numerical Considerations}
There are a few numerical considerations to clarify the implementation of the proposed method. In order to minimize biased training of the neural network proportional to the values of $\phi$, we consider the following normalization of $\phi$
\[
\phi_{\text{normalized}}\left(\bx\right) =\frac{\phi\left(\bx\right)}{\sqrt{\max_\Omega\left(\phi\right)\max_\Omega\left(-\phi\right)}}, 
\]
where the maximum is computed over points in $\Omega$. The values of $\phi$ over the entire computational domain $\Omega$ are balanced in the objective $\cL_{\text{SP}}$ and the normalization alleviates the imbalance of $\phi$ and enables the model to recover the SDF well from the strongly distorted level set function $\phi$. To remove a singularity caused by the absolute value function in \eqref{eq:net_u}, we use a smooth approximation. 
For a point $x_0$ at which $\cS'\left(x_0\right)=0.6 \max_x \cS '\left(x\right) $, we define a quadratic approximation
\[
ABS_\infty \left(x\right)\coloneqq \begin{cases}
    \alpha x^2, & \text{if } \left\vert x\right\vert\leq x_0\\
    \left\vert x\right\vert + q, & \text{if } \left\vert x\right\vert> x_0,
\end{cases}
\]
where $\alpha$ is chosen such that $\left(\cS\cdot ABS_\infty\right)'\left(x_0\right)=1$, and $q$ is taken so that $ABS_\infty$ is continuous at $x_0$.
Moreover, we adopt the leaky rectified linear unit (LeakyReLU) activation function:
\[
\text{LeakyReLU}\left(x\right)=\begin{cases}x & \text{if } x\geq 0, \\ 0.01 x & \text{otherwise}, \end{cases}
\]
which is possibly a good representation for a non-smooth function. 

The second term of the loss functions $\cL_{\text{RS}}$ has a complex form in which the network is composited within the network. It causes practical difficulties in optimization when the loss function has a stacked network. In the implementation, we optimize $\cL_{\text{RS}}$ as follows: 
\[ \cL_{\text{RS}}\left(\theta\right)=\frac{1}{\mid \cD \mid}\sum_{\bx\in\cD}\left\Vert V_\theta\left(\bx\right)- V_{\hat{\theta}}\left( \bx-\eta u_\theta\left(\bx\right)V_\theta\left(\bx\right) \right)\right\Vert ^2,
\]
where we do not update the network $V_{\hat{\theta}}$ by disabling the gradient calculation of $V_{\hat{\theta}}$. Then, the computational graph is no longer associated with the parameters of $V_{\hat{\theta}}$ and it is regarded as a fixed function in the loss function.

The SDF is Lipschitz continuous with the Lipschitz constant $1$. In the multilayer perceptron structure approximating $u_\theta$, the activation function and weights $W_\ell$ determine the Lipschitz condition of the approximated SDF. Since we are using LeakyReLU whose Lipschitz conatant is 1 as an activation function, we can obtain the desired condition by adjusting the weights. The commonly used method to enforce the Lipschitz condition is \textit{weight clipping} \cite{arjovsky2017wasserstein}. It clamps the weights $W_\ell$ to a bounded box $W_\ell \in \left[-M, M\right]^{d_{\ell}\times d_{\ell-1}}$ after each gradient update. In the implementation, the clipping parameter $M=0.1$ is used.

%%%%%%%%%%%%%%%%%%%%%%%%%%%%%%%%%%%%%%%%%%%%%%%%%%%%%%%%%%%%%%

\section{Numerical Results}
In this section, we present numerical results of \textit{ReSDF} on several problems to validate the effectiveness and accuracy for re-distancing the given level set functions. For quantitative measurements, the accuracy of the trained model is measured by the difference with the exact solution $u$ and exact gradient $\bn$ using the following norms:
\begin{equation}
\label{eq:error_whole}
\begin{alignedat}{4}
&\left\Vert u_\theta - u\right\Vert _{L^2}  && =\frac{1}{\left\vert\Omega\right\vert}\left(\int_{\Omega}\left\vert u_\theta\left(\bx\right)-u\left(\bx\right) \right\vert ^2d\bx\right)^{1/2}, \quad
&& \left\Vert u_\theta - u\right\Vert_{L^\infty} &&=\underset{ \bx\in\Omega}{\max}\left\vert  u_\theta\left(\bx\right)-u\left(\bx\right) \right\vert, \\
&\left\Vert V_\theta - \bn\right\Vert _{L^2}  && =\frac{1}{\left\vert\Omega\right\vert}\left(\int_{\Omega}\left\vert V_\theta\left(\bx\right)-\bn\left(\bx\right) \right\vert ^2ds\right)^{1/2},  \quad
&&\left\Vert V_\theta - \bn\right\Vert_{L^\infty} && =\underset{ \bx\in\Omega}{\max}\left\vert  V_\theta\left(\bx\right)-\bn\left(\bx\right) \right\vert,
\end{alignedat}
\end{equation}
where $\left\vert\Omega\right\vert$ is the volume of the computational domain $\Omega$. Since the errors on the interface $\Gamma$ are crucial for the cases of evolving the interface, we also check the errors, $\left\Vert V_\theta - \bn\right\Vert _{L^2}^{\Gamma}$ and $\left\Vert V_\theta - \bn\right\Vert _{L^\infty}^{\Gamma}$, which are only measured on the interface $\Gamma$ for a few examples. For qualitative comparison, the results of \textit{ReSDF} are compared with the existing PINN approach and the first \cite{sethian1996fast} and second-order \cite{sethian1999level} fast marching method (FMM), where implementation builds on an open-source code \footnote{\url{https://github.com/scikit-fmm/scikit-fmm}}.  The PINN method based on \cite{fayolle2021signed} uses the residual of the eikonal equation \eqref{eq:pinn_eikonal} as the loss function. As suggested in \cite{fayolle2021signed}, 
% \textcolor{red}{please check the following statements are mentioned in \cite{fayolle2021signed}}, 
a fully connected network with depth $8$ and width $512$ with skip connection from the input and the fourth hidden layer are used in which all weights and biases are initialized by the geometric initialization scheme \cite{atzmon2020sal}. We optimize the network by Adam optimizer \cite{kingma2014adam} with learning rate of $10^{-4}$. We also comply with all other experimental configurations as provided in \cite{fayolle2021signed}. Note that the results of the PINN approach can be improved because the performance relies on specific geometric network parameter initialization \cite{atzmon2020sal} and is sensitive to hyper-parameter selection.

For all examples of testing \textit{ReSDF}, 
% \textcolor{red}{Are the following statements only for ReSDF or also for PINN?}, 
the Adam optimizer is applied with a learning rate of $10^{-3}$, which decayed by $0.9$ if the loss function does not improve after $30$ evaluations. We evaluate the models per $100$ epochs and trained them until the learning rate decayed by $40$ times. In all numerical experiments, a single NVIDIA RTX 3090 GPU is used.

\begin{figure}
	\begin{center}
		\begin{tabular}{ccc}
			\includegraphics[height=0.35\textwidth]{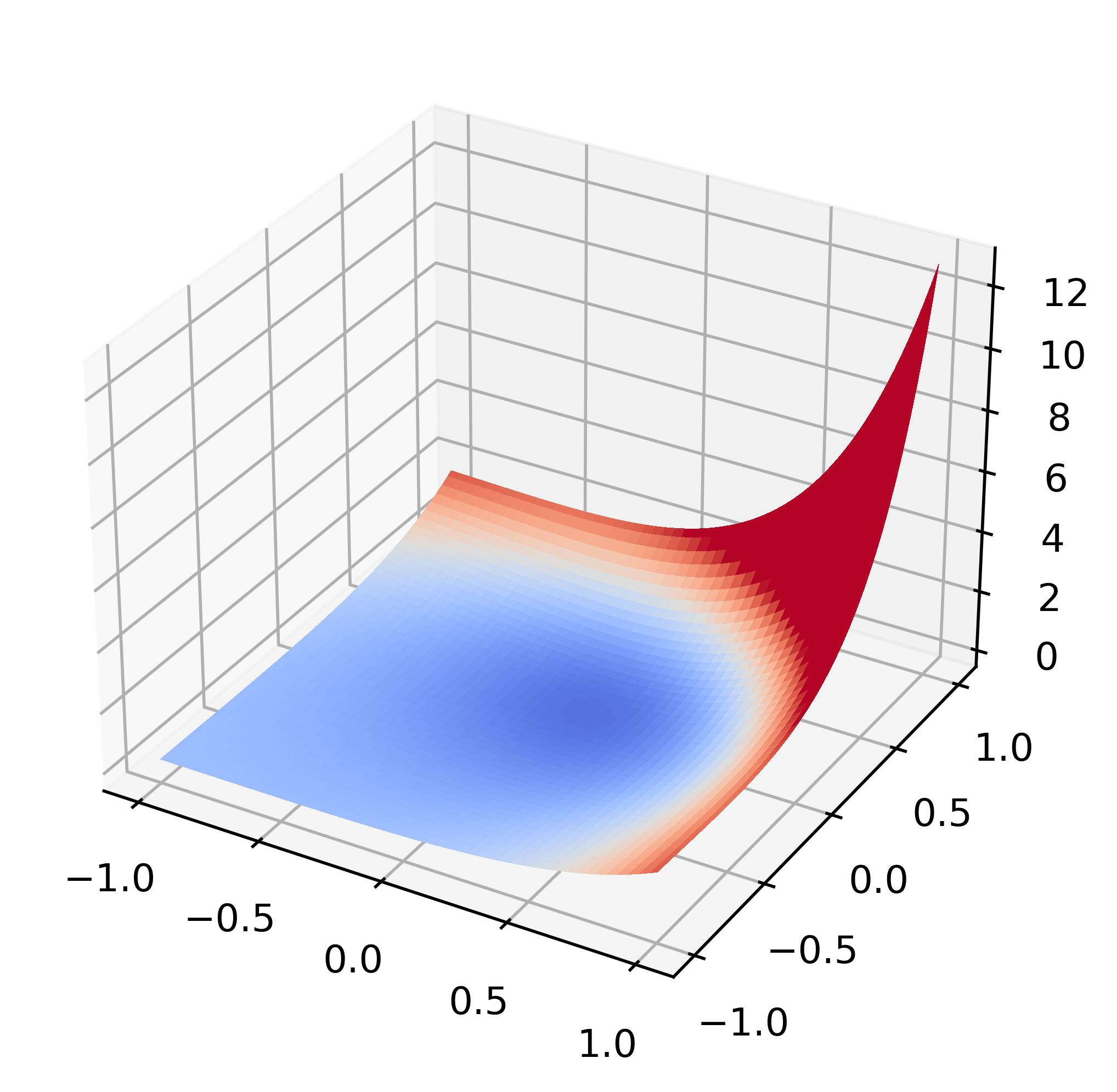} & 
			\includegraphics[height=0.35\textwidth]{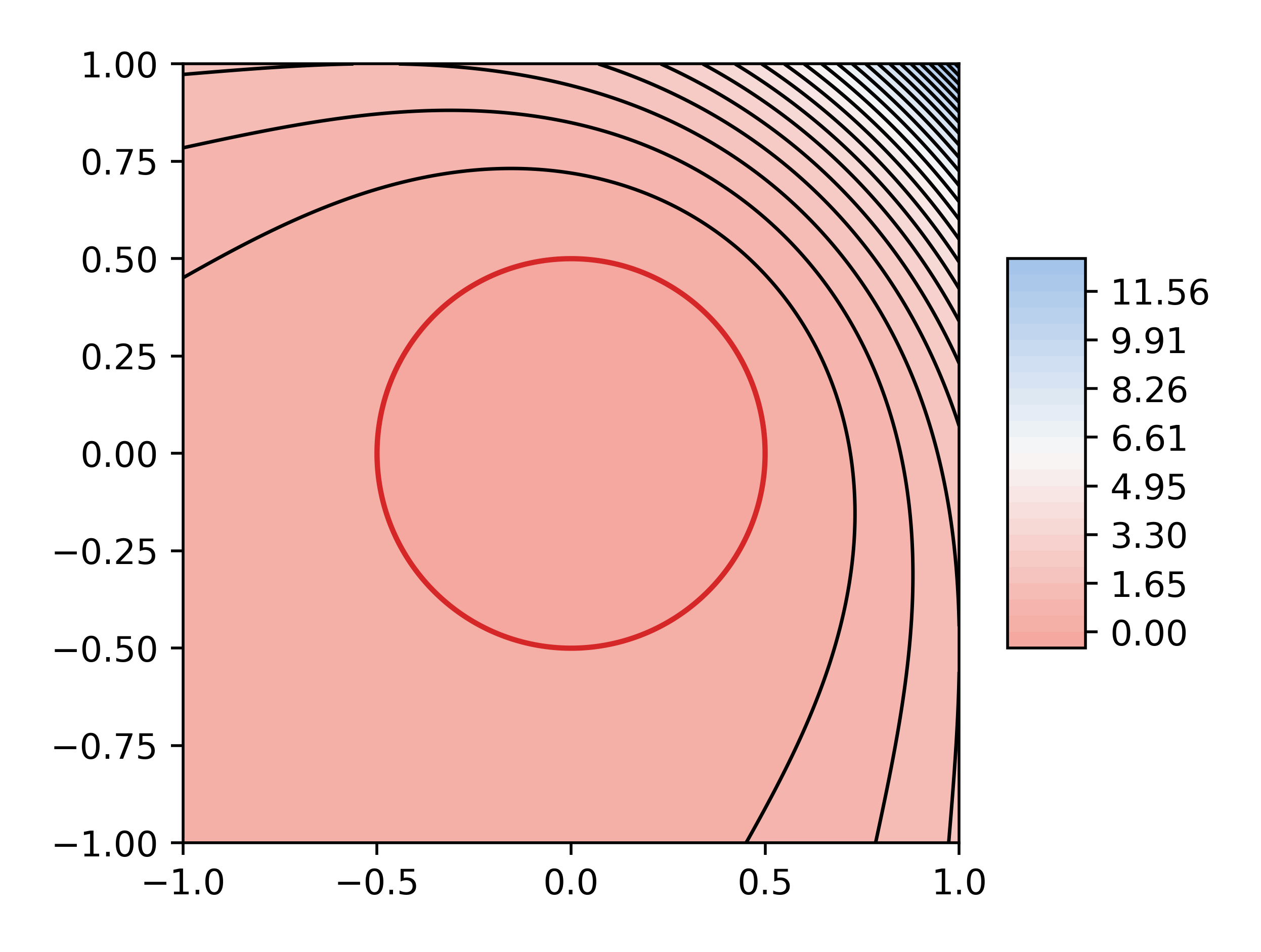} \\
			\multicolumn{2}{c}{(a) $\phi_1$} \\
			\includegraphics[height=0.35\textwidth]{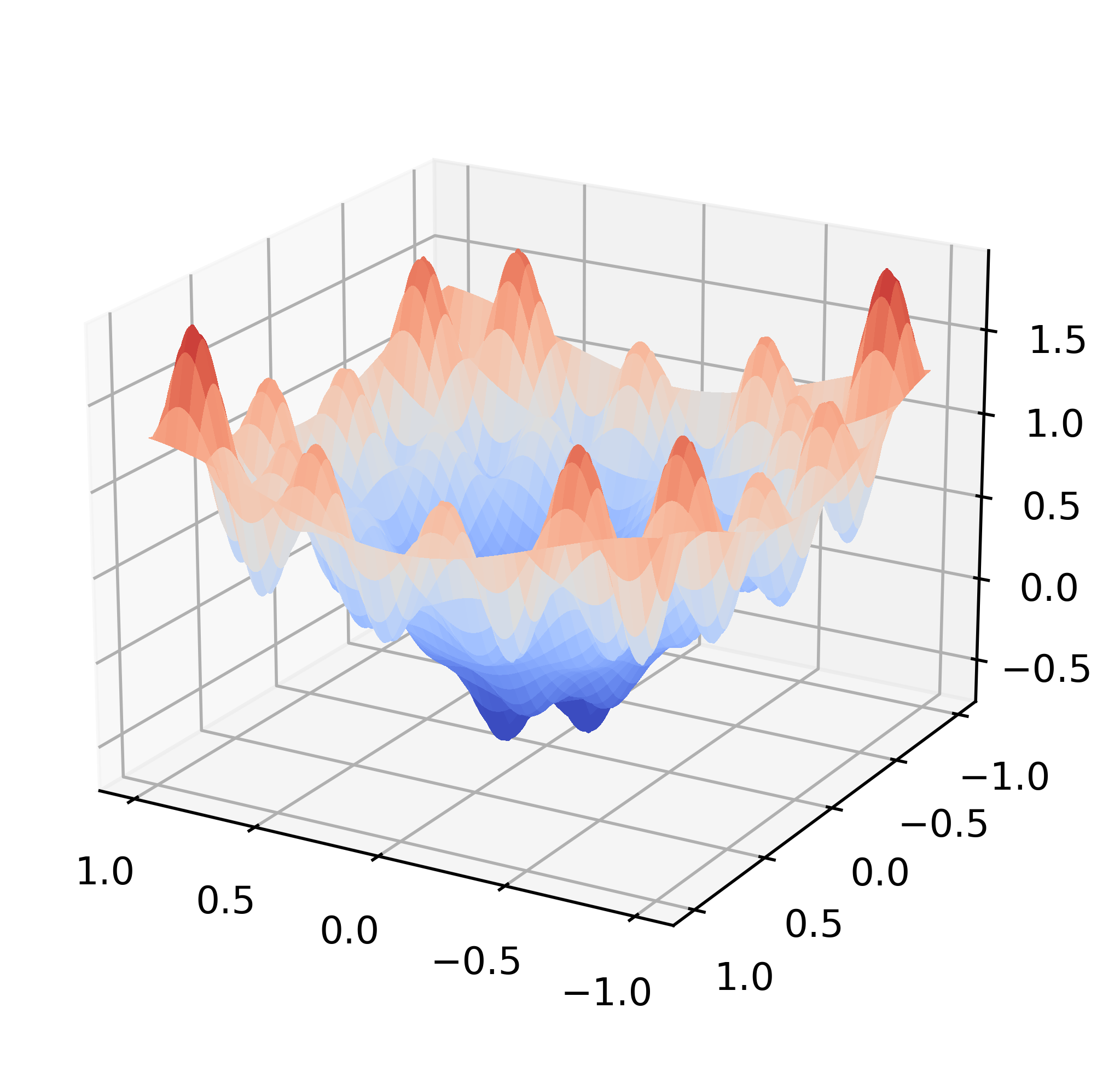} &
			\includegraphics[height=0.35\textwidth]{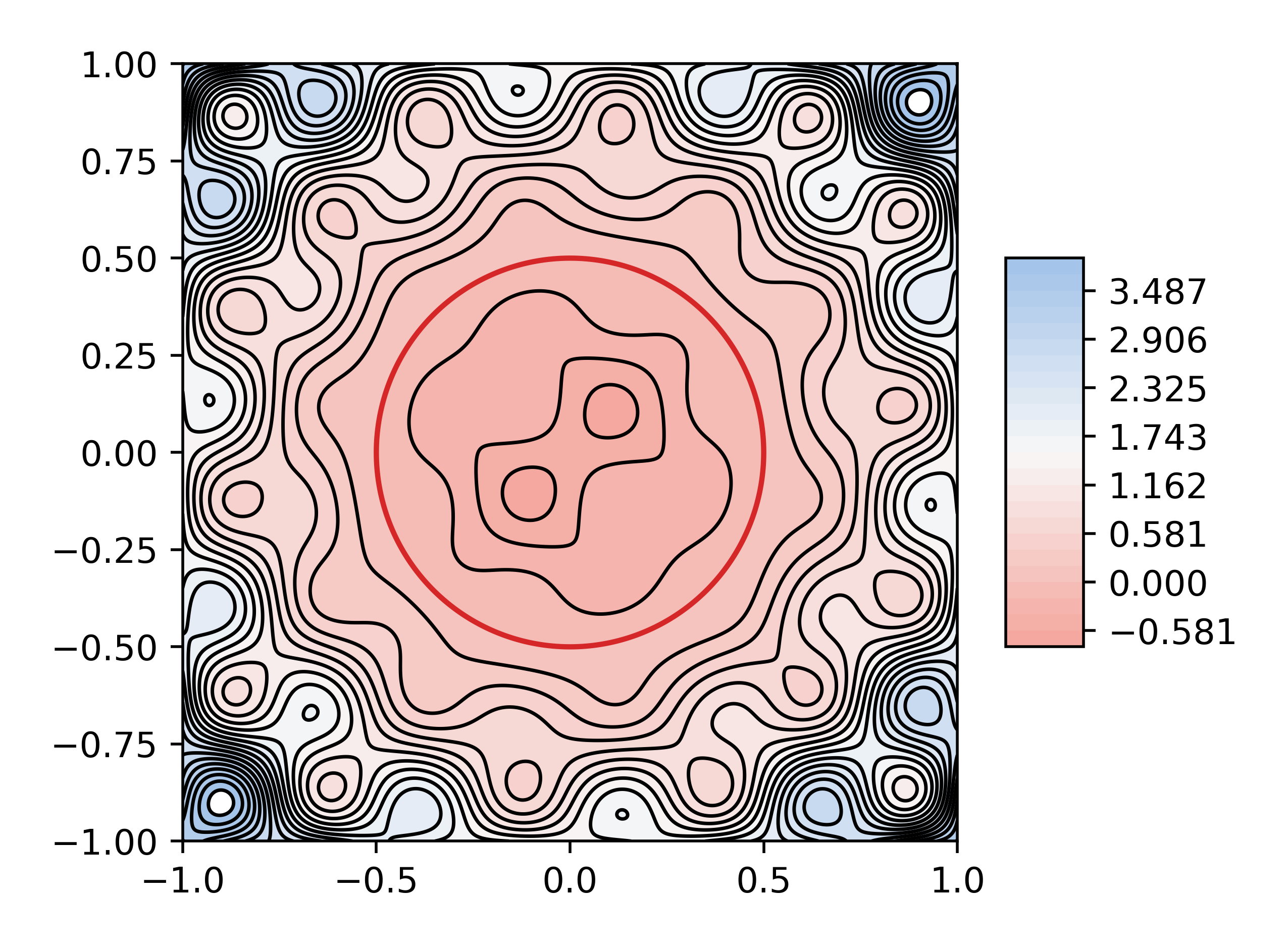} \\
			\multicolumn{2}{c}{(a) $\phi_2$} \\
			\includegraphics[height=0.35\textwidth]{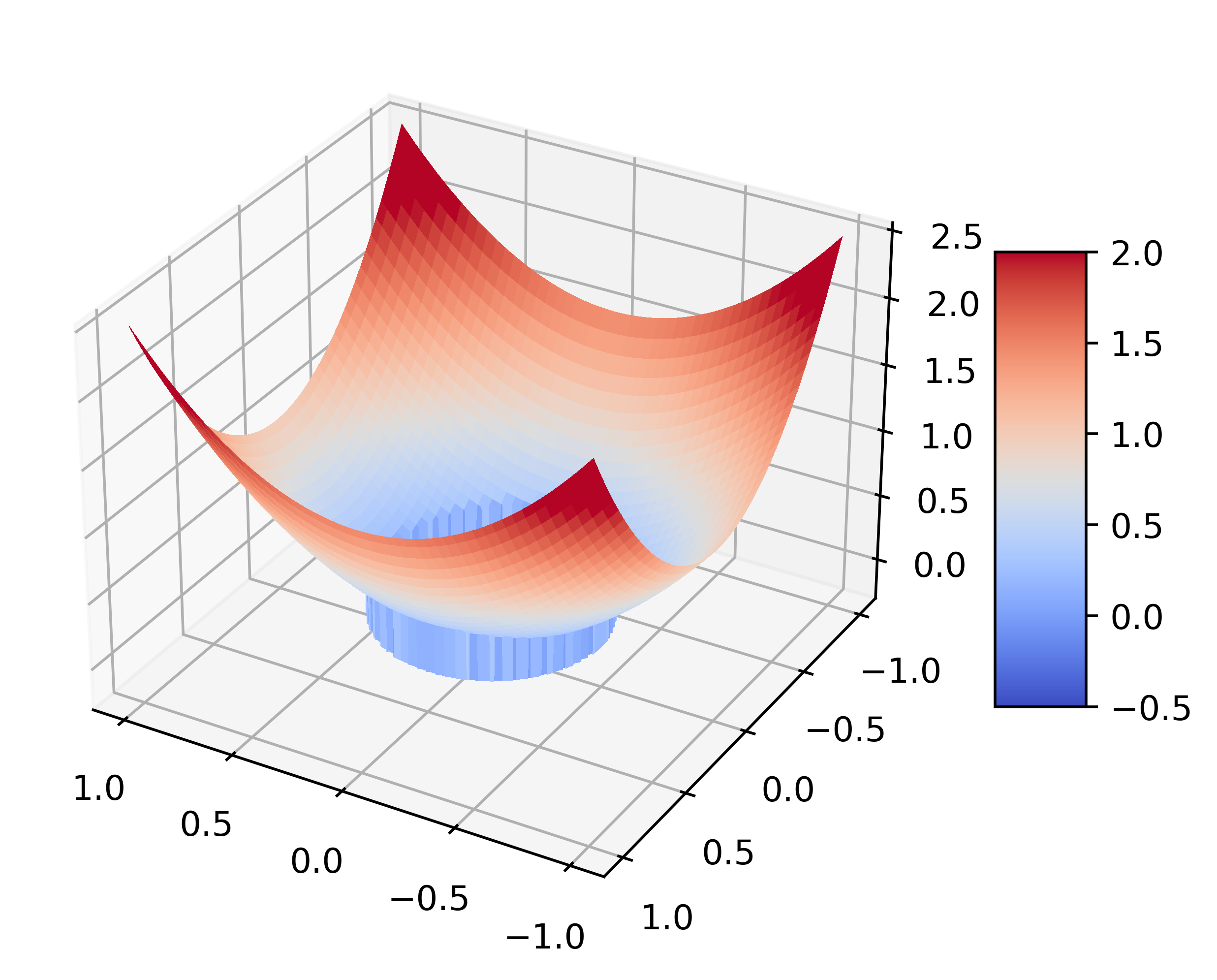} &
			\includegraphics[height=0.35\textwidth]{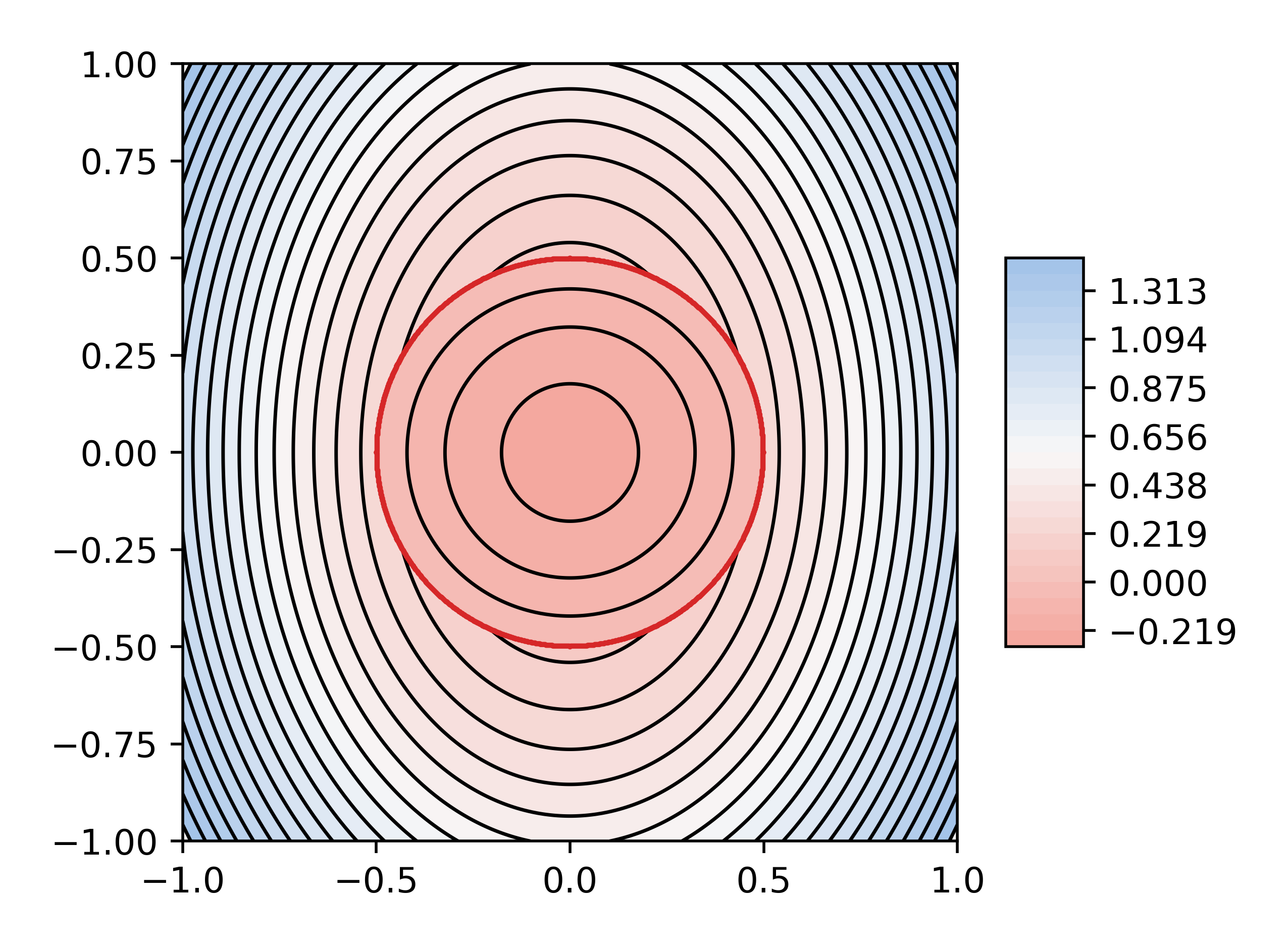} \\
			\multicolumn{2}{c}{(c) $\phi_3$}
		\end{tabular}
	\end{center}
	\caption{The graph (left) and iso-contours (right) of level set functions, a stiff shape $\phi_1$ \eqref{eq:circ_phi_exp}, an oscillatory shape $\phi_2$ \eqref{eq:circ_phi_oscill}, and a discontinuous shape $\phi_3$ \eqref{eq:circ_phi_jump} are presented.}\label{fig:circ_phi} 
\end{figure}

The computational domains of all examples are $\Omega^{\text{3d}} = [-1.1]^3$ or $\Omega = \Omega^{\text{3d}} \cap \left(\mathbb{R}^2\times\{0\}\right)$. The collocation points evenly distributed on the domain $\Omega^{\text{3D}}$ are presented by
\begin{align}\label{eq:collpts}
    \Omega_n^{\text{3d}} = \left\{ \left( -1 + ih, -1 + jh, -1 + kh\right) : i, j, k \in \{ 0, \ldots, n-1 \}, \: h = \frac{2}{n-1} \right\}.
\end{align}
Accordingly, $\Omega_n =\Omega_n^{\text{3d}} \cap \left(\mathbb{R}^2\times\{0\}\right)$. All interfaces and given level set functions used in numerical examples are listed below. In case of the closed form of the exact SDF is not known for given interfaces, we approximate it by creating a finite number of points, around 7000, on the interface and finding the minimum Euclidean distance from them.
\begin{enumerate}
    \item A circle centered at the origin with the radius $r=\frac{1}{2}$ is used in \nameref{subsec:circle}. The explicit form of the exact SDF is an equation of the cone:
    \begin{align}\label{eq:cone}
        u\left(x,y\right)=\sqrt{ x^2+ y ^2}-r.
    \end{align}
    Three level set functions are considered in \nameref{subsec:circle}:
    \begin{itemize} 
        \item An unbalanced function that is relatively flat and stiff on $\Omega^-$ and $\Omega^+$, respectively; see Figure \ref{fig:circ_phi}-(a):
            \begin{equation}\label{eq:circ_phi_exp}
                \phi_1\left(x,y\right)=e^{x+y}u(x,y).
            \end{equation}
        \item A rapidly oscillating non-monotone function; see Figure \ref{fig:circ_phi}-(b):
            \begin{equation}\label{eq:circ_phi_jump}
            \phi_2\left(x,y\right)=\begin{cases}
            x^2+\frac{3}{2}y^2 & \text{ if } u(x,y) > 0,  \\
            (u(x,y) - r_0)^2-r_0^2 & \text{ otherwise }.
            \end{cases}
            \end{equation}  
        \item A discontinuous function with a jump on the interface; see Figure \ref{fig:circ_phi}-(c):
            \begin{equation}\label{eq:circ_phi_oscill}
            \phi_3\left(x,y\right)=\left(1.2\sin\left(4\pi x\right)\sin\left(4\pi y\right)+2\right)e^{-\left(x^2+y^2\right)/2}u(x,y).
            \end{equation}
    \end{itemize}
    \item A square centered at the origin with the length of the side $r = 1$ is used in \nameref{subsec:square}. The level set function whose zero level is the square is given; see the coutours in Figure \ref{fig:rectangle}-(a):
    \begin{align}\label{eq:sq_phi}
        \phi_4\left(x,y\right)=\frac{1}{2}\max\left\{ \mid x\mid-\frac{r}{2},\mid y\mid-\frac{r}{2}\right\}.
    \end{align}
    \item Flower-shaped interfaces with five-fold symmetry represented by the zero level set of the level set function with the polar coordinates $\left(r,\theta\right)$ are used in \nameref{subsec:irr}; see the iso-contours in Figures \ref{fig:phi_irr}-(a) and (b):
    \begin{align} \label{eq:flower}
        \phi_5^{\alpha}\left(x,y\right)=\left(0.5\sin\left(6\pi x\right)\sin\left(6\pi y\right) + 1\right)\left(5r - 2 - \alpha\cos\left(5\theta\right)\right),
    \end{align}
    where $\alpha = 0.5$ and $1$.
    \item A dumbbell-shaped interface represented by the zero level set of the level set function is used in \nameref{subsec:irr}; see the iso-contours in Figure \ref{fig:phi_irr}-(c):
    \begin{align} \label{eq:dumbbell}
        \phi_6\left(x,y\right)=10x^4\left(2x^2-1\right)+y^2-\frac{1}{10}.
    \end{align}
    \item A heart-shaped interface represented by the zero level set of the level set function is used in \nameref{subsec:irr}; see the coutours in Figure \ref{fig:phi_irr}-(d):
    \begin{align} \label{eq:heart}
        \phi_7\left(x,y\right)=\left(\cos\left(x,y\right)+2\left(x+y\right)^2+\frac{3}{2}\right)\left(2.2\left(y+0.2-x^{\frac{2}{3}}\right)^2+1.7x^2-0.6\right).
    \end{align}
    \item Multiple circular interfaces represented by the zero level set of minimum of the level set functions $u_j\left(x,y\right)=(x - x_j) ^2+( y - y_j)^2 - r_j^2$ are used in \nameref{subsec:mul_intf}:
    \begin{align}
        \phi_8\left(x,y\right) &= e^{x+y} \min_{x,y} \left\{u_j(x,y) : j = 1, 2\right\}, \label{eq:two_cir} \\
        \phi_9\left(x,y\right) &= e^{x+y} \min_{x,y} \left\{u_j(x,y) : j = 3, 4, 5\right\}, \label{eq:three_cir}
    \end{align}
    where $(x_1, y_1, r_1) = (-0.2,0,0.3)$, $(x_2, y_2,r_2) = (-0.2,0,0.3)$, $(x_3, y_3,r_3) = (-0.4,0.3,0.45)$, $(x_4, y_4,r_4) = (0.5,0.3,0.3)$, and $(x_5, y_5,r_5) = (0.3,-0.5,0.4)$; see the iso-contours in Figures \ref{fig:multcircle}-(a) and (c).
    \item A sphere centered at the origin with the radius $r=\frac{1}{2}$ is represented by the zero level sest of the level set function used in \nameref{subsec:3D}; see the coutours on $z=0$ in Figure \ref{fig:3d}-(a):
    \begin{align} \label{eq:3d_sphere}
    \phi_{10}\left(x,y,z\right) = \left(\left(x-1\right)^2+\left(y-1\right)^2+\left(z+1\right)^2+0.1\right)\left(x^2+y^2+z^2-r^2\right).
    \end{align}
    Multiple ellipsoidal interfaces represented by the zero level set of minimum of the level set functions $v_j(x,y,z) = 10 (x - x_j)^2 + 5 (y - y_j)^2 + (z - z_j)^2 - 1$ are used in \nameref{subsec:3D}; see the iso-contours on $z=0$ in Figure \ref{fig:3d}-(c):
    \begin{align} \label{eq:3d_two_ellip}
    \phi_{11}\left(x,y,z\right) = \min_{x,y,z} \left\{v_j(x,y,z):j=1,2\right\},
    \end{align}
    where $(x_1,y_1,z_1)=(-0.2,0,0)$ and $(x_2,y_2,z_2)=(0.2,0,0)$.
\end{enumerate}

\subsection*{Example 1}\label{subsec:circle}

\begin{table}
    \centering
      \setlength\tabcolsep{4pt}
    \caption{The $L^2$ and $L^\infty$ errors between exact and predicted SDF of \textit{ReSDF} with $\phi_1$ \eqref{eq:circ_phi_exp} are listed. The network depth is fixed to $L=4$.} \label{tab:acc_u}
    % \vspace{2pt}
    \scalebox{0.84}{
    \begin{tabular}{ccccccccc}
    \toprule
    \multirow{2}{*}{\diagbox[innerwidth=\textwidth*1/7]{$\mid\cD\mid$}{width} }
     & \multicolumn{2}{c}{32} & \multicolumn{2}{c}{64} & \multicolumn{2}{c}{128} & \multicolumn{2}{c}{256} \\ 
    \cmidrule(lr){2-3} \cmidrule(lr){4-5} \cmidrule(lr){6-7} \cmidrule(lr){8-9}
      & $\left\Vert u_\theta - u\right\Vert_{2}$ & $\left\Vert u_\theta - u\right\Vert_{\infty}$  & $\left\Vert u_\theta - u\right\Vert_{2}$ & $\left\Vert u_\theta - u\right\Vert_{\infty}$  & $\left\Vert u_\theta - u\right\Vert_{2}$ & $\left\Vert u_\theta - u\right\Vert_{\infty}$  & $\left\Vert u_\theta - u\right\Vert_{2}$ & $\left\Vert u_\theta - u\right\Vert_{\infty}$ \\
    \midrule
    $\Omega_6$  & $1.25\cdot 10^{-3}$ & $1.66\cdot 10^{-2}$ & $4.81\cdot 10^{-4}$ & $9.33\cdot 10^{-3}$ & $1.43\cdot 10^{-4}$  & $4.51\cdot 10^{-3}$ & $1.60\cdot 10^{-4}$ & $4.41\cdot 10^{-3}$\\
    $\Omega_7$  & $2.38\cdot 10^{-3}$ & $3.51\cdot 10^{-2}$ & $1.84\cdot 10^{-4}$ & $2.05\cdot 10^{-3}$ & $1.01\cdot 10^{-4}$  & $1.81\cdot 10^{-3}$ & $8.70\cdot 10^{-5}$ & $2.44\cdot 10^{-3}$ \\
    $\Omega_8$ & $1.50\cdot 10^{-3}$ & $1.87\cdot 10^{-2}$ & $5.65\cdot 10^{-4}$ & $9.97\cdot 10^{-3}$ & $8.15\cdot 10^{-5}$ & $1.23\cdot 10^{-3}$ & $6.90\cdot 10^{-5}$ & $1.96\cdot 10^{-3}$  \\
    \bottomrule
  \end{tabular}}
\end{table} 

\begin{table}
    \centering
      \setlength\tabcolsep{4pt}
    \caption{The $L^2$ and $L^\infty$ errors between exact gradient and the gradient of the predicted SDF of \textit{ReSDF} with $\phi_1$ \eqref{eq:circ_phi_exp} on the interface $\Gamma$ are listed. The network depth is fixed to $L=4$.} \label{tab:acc_V}
    % \vspace{2pt}
    \scalebox{0.84}{
    \begin{tabular}{ccccccccc}
    \toprule
    \multirow{2}{*}{\diagbox[innerwidth=\textwidth*1/7]{$\mid\cD\mid$}{width} }
     & \multicolumn{2}{c}{32} & \multicolumn{2}{c}{64} & \multicolumn{2}{c}{128} & \multicolumn{2}{c}{256} \\ 
    \cmidrule(lr){2-3} \cmidrule(lr){4-5} \cmidrule(lr){6-7} \cmidrule(lr){8-9}
      & $\left\Vert V_\theta -\bn\right\Vert_{2}^{\Gamma}$ & $\left\Vert V_\theta -\bn\right\Vert_{\infty}^{\Gamma}$ & $\left\Vert V_\theta -\bn\right\Vert_{2}^{\Gamma}$ & $\left\Vert V_\theta -\bn\right\Vert_{\infty}^{\Gamma}$ & $\left\Vert V_\theta -\bn\right\Vert_{2}^{\Gamma}$ & $\left\Vert V_\theta -\bn\right\Vert_{\infty}^{\Gamma}$ & $\left\Vert V_\theta -\bn\right\Vert_{2}^{\Gamma}$ & $\left\Vert V_\theta -\bn\right\Vert_{\infty}^{\Gamma}$ \\
    \midrule
    $\Omega_6$  & $4.40\cdot 10^{-3}$ & $2.01\cdot 10^{-2}$ & $4.85\cdot 10^{-3}$ & $2.97\cdot 10^{-2}$ & $2.41\cdot 10^{-3}$ & $1.10\cdot 10^{-2}$ & $6.10\cdot 10^{-3}$ & $3.08\cdot 10^{-2}$ \\
    $\Omega_7$ & $4.93\cdot 10^{-3}$ & $2.51\cdot 10^{-2}$ & $1.76\cdot 10^{-3}$ & $7.86\cdot 10^{-3}$ & $1.14\cdot 10^{-3}$ & $5.43\cdot 10^{-3}$ & $4.10\cdot 10^{-3}$ & $2.15\cdot 10^{-2}$ \\
    $\Omega_8$ & $3.71\cdot 10^{-3}$ & $2.00\cdot 10^{-2}$ & $3.50\cdot 10^{-3}$ & $1.99\cdot 10^{-2}$ & $1.35\cdot 10^{-3}$ & $7.64\cdot 10^{-3}$ & $1.59\cdot 10^{-3}$ & $8.64\cdot 10^{-3}$  \\
    \bottomrule
  \end{tabular}}
\end{table} 

In this example, we would like to numerically check an accuracy and robustness of the proposed model using an exact SDF \eqref{eq:cone}. The accuracy of the learned solution is investigated with respect to different network sizes and batch sizes $\mid\cD\mid$ of the training data points. Numerical tests of \textit{ReSDF} are performed by using the level set function $\phi_1$ defined in \eqref{eq:circ_phi_exp} with network widths $2^m$, $m=5,\ldots,8$ and collocation points $\Omega_n$ with $n=6,\ldots,8$. In Table \ref{tab:acc_u}, the $L^2$ and $L^\infty$ errors \eqref{eq:error_whole} between the exact SDF and learned $u_\theta$ over the entire computational domain are presented. The accuracy of the obtained $V_\theta$ measured by $L^2$ and $L^\infty$ errors \eqref{eq:error_whole} on the interface between $V_\theta$ and the exact outward normal vector $\bn$ is summarized in Table \ref{tab:acc_V}. Note that only the number of collocation points and the width are changed and the other experimental settings are kept the same. The results show that the errors do not alter significantly with different batch sizes of training collocation points. Even with a coarse grid, we still obtain accurate predictions. We can also observe that the resulting errors decrease as the network width size increases.
This is a general phenomenon in which the expressive power increases as the size of the network increases.

\begin{table}
    \centering
    \setlength\tabcolsep{8pt}
    \caption{Comparison of the accuracy of the learned SDFs for the circular interface obtained by PINN and \textit{ReSDF} for three initial level set functions $\phi_i$; a stiff shape $\phi_1$ \eqref{eq:circ_phi_exp}, an oscillatory shape $\phi_2$ \eqref{eq:circ_phi_oscill}, and a discontinuous shape $\phi_3$ \eqref{eq:circ_phi_jump}.} \label{tab:acc_circ_phis}
    \vspace{2pt}
    \scalebox{0.9}{
    \begin{tabular}{cccccccccc}
    \toprule
    \multirow{2}{*}{Model} & \multirow{2}{*}{\# Params}  & \multicolumn{2}{c}{$\phi_1$} & \multicolumn{2}{c}{$\phi_2$} & \multicolumn{2}{c}{$\phi_3$} \\ 
    \cmidrule(lr){3-4} \cmidrule(lr){5-6} \cmidrule(lr){7-8}
     & & $\left\Vert u_\theta - u\right\Vert_{2}$ & $\left\Vert u_\theta - u\right\Vert_{\infty}$  & $\left\Vert u_\theta - u\right\Vert_{2}$ & $\left\Vert u_\theta - u\right\Vert_{\infty}$  & $\left\Vert u_\theta - u\right\Vert_{2}$ & $\left\Vert u_\theta - u\right\Vert_{\infty}$ \\
    \midrule
    \textit{ReSDF} & 17,027 & $1.84 \cdot 10^{-4}$ & $2.05 \cdot 10^{-3}$ & $1.10\cdot 10^{-4}$ & $2.03\cdot 10^{-3}$ & $4.31\cdot 10^{-3}$  & $1.25\cdot 10^{-2}$ \\
    PINN & 1,840,641 & $2.74\cdot 10^{-1}$ &$7.99\cdot 10^{-1}$ & $8.32\cdot 10^{-1}$ & $2.26\cdot 10^{0}$ & $5.22\cdot 10^{0}$  & $19.92\cdot 10^{0}$ \\
    \bottomrule
  \end{tabular}}
\end{table} 

\begin{figure}
	\begin{center}
		\begin{tabular}{cc}
			\includegraphics[height=0.35\textwidth]{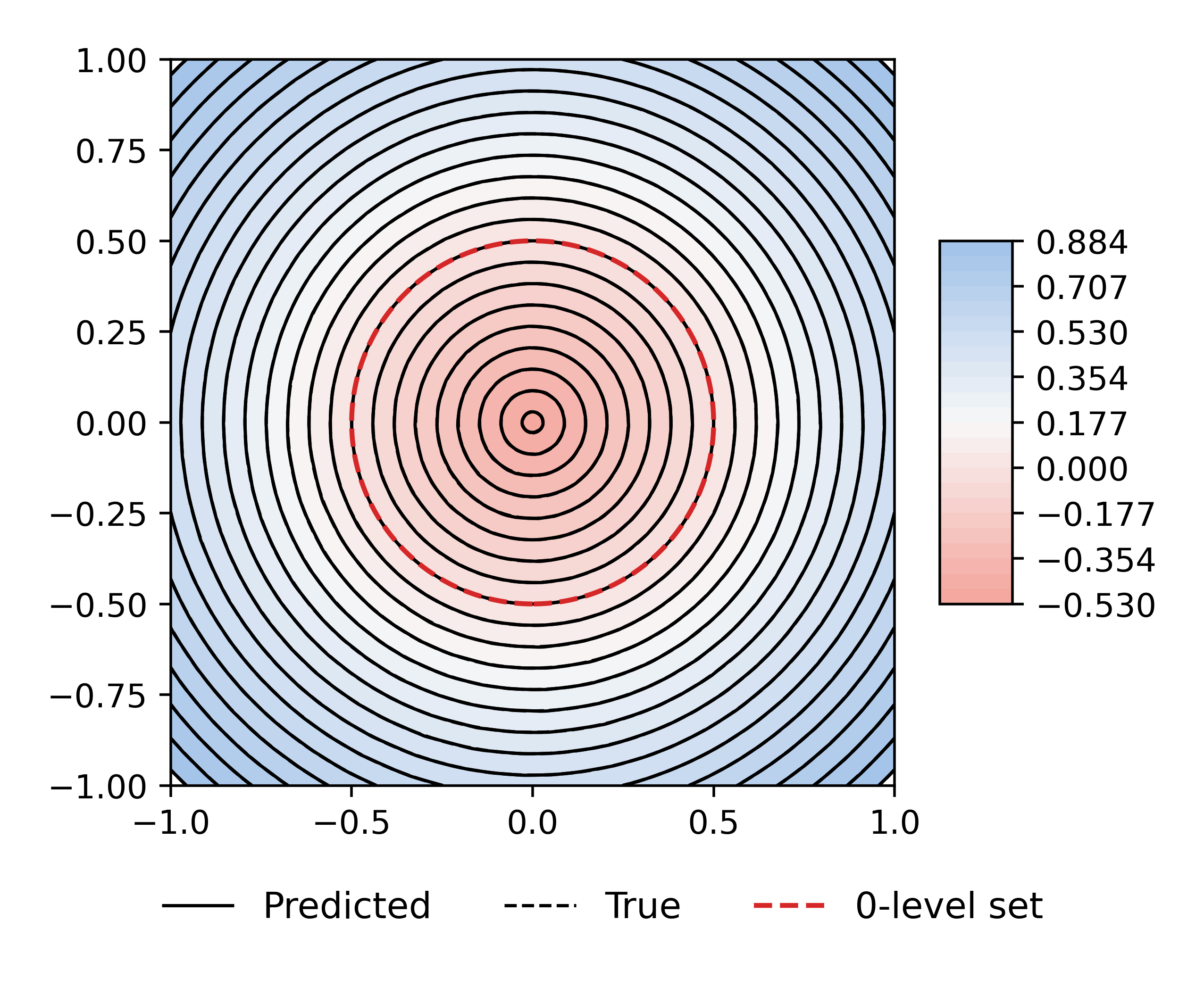} & 
			\includegraphics[height=0.35\textwidth]{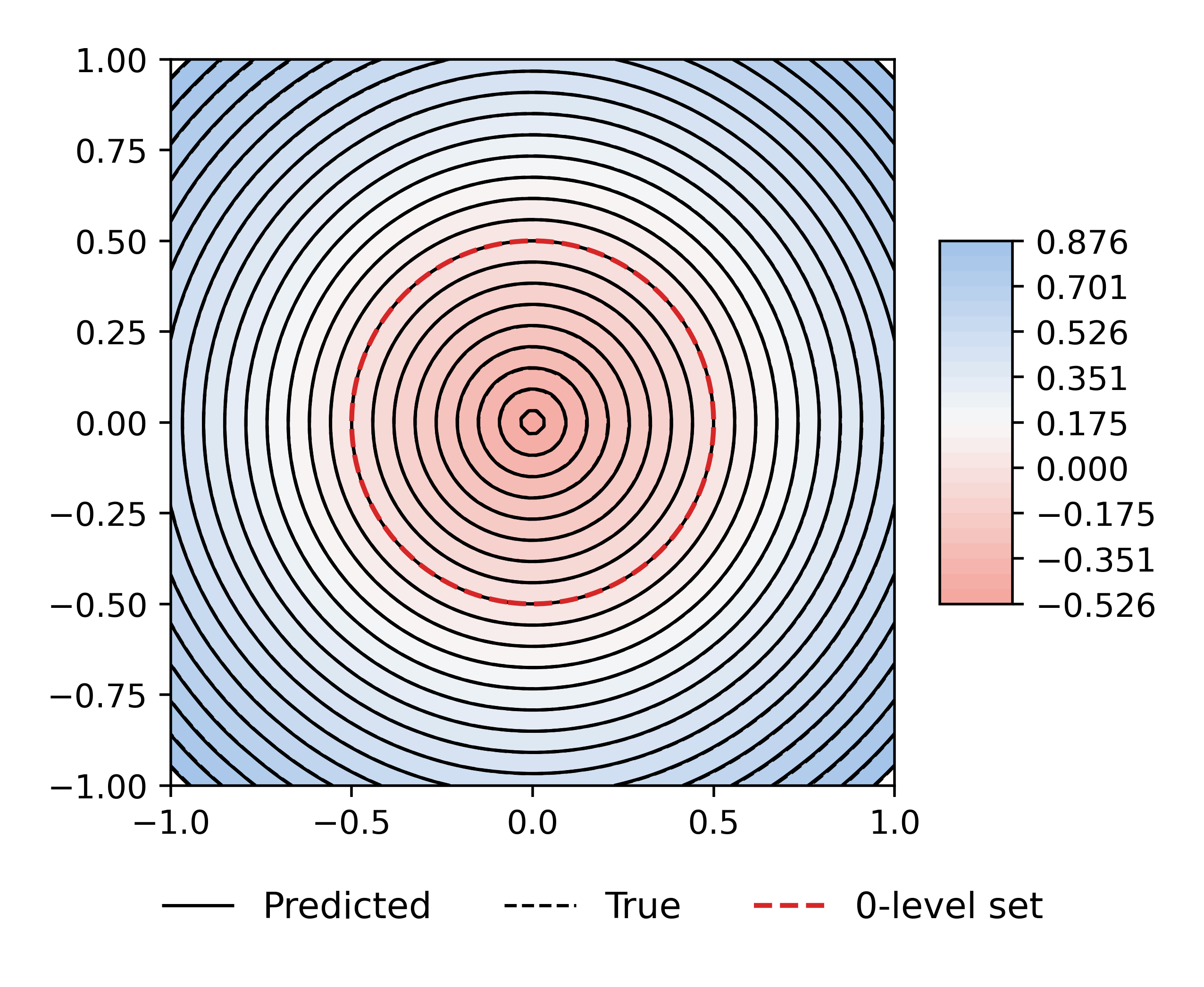} \\
			(a1) \textit{ReSDF} with $\phi_1$ & (b1) PINN with $\phi_1$ \\
			\includegraphics[height=0.35\textwidth]{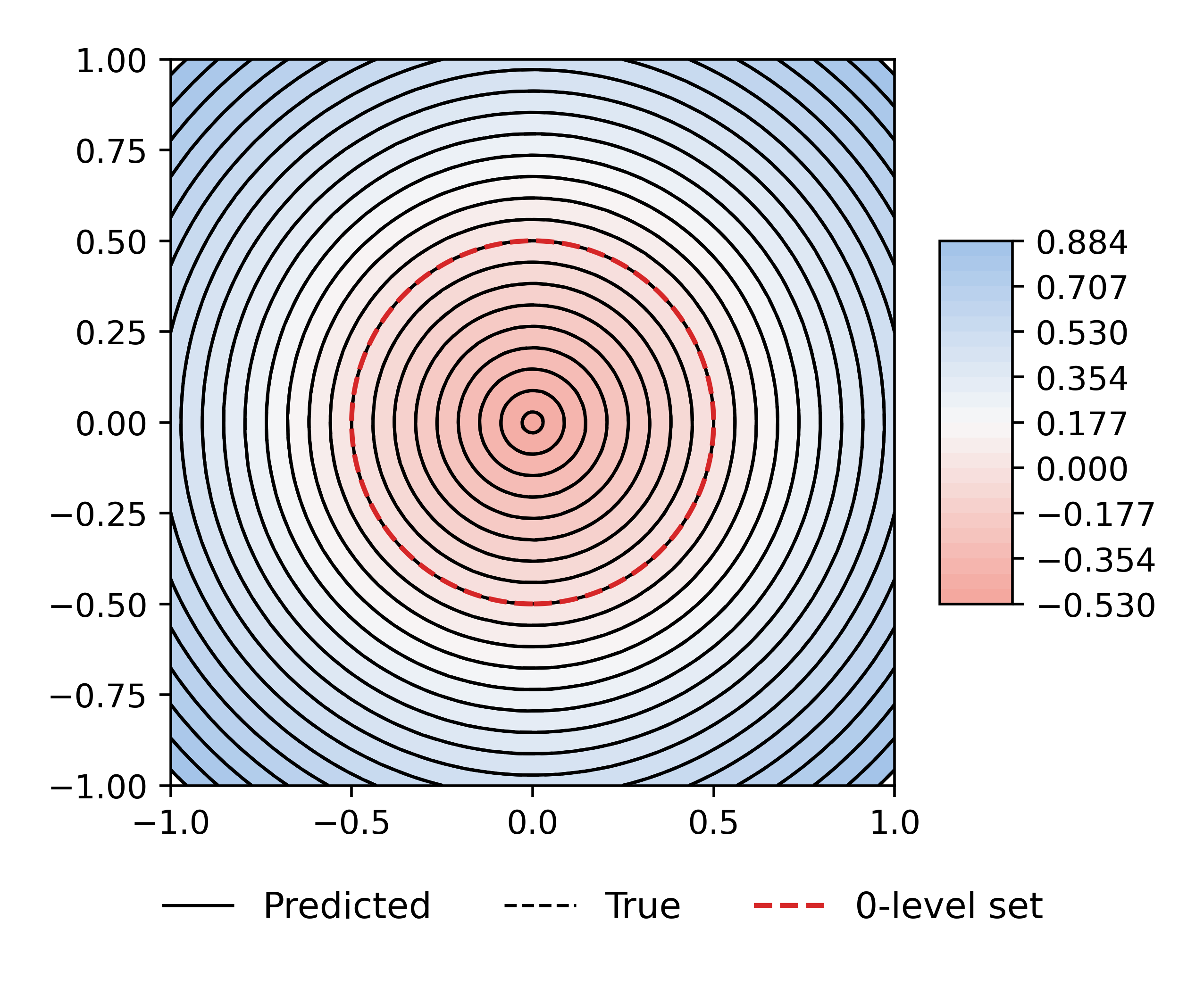} &
			\includegraphics[height=0.35\textwidth]{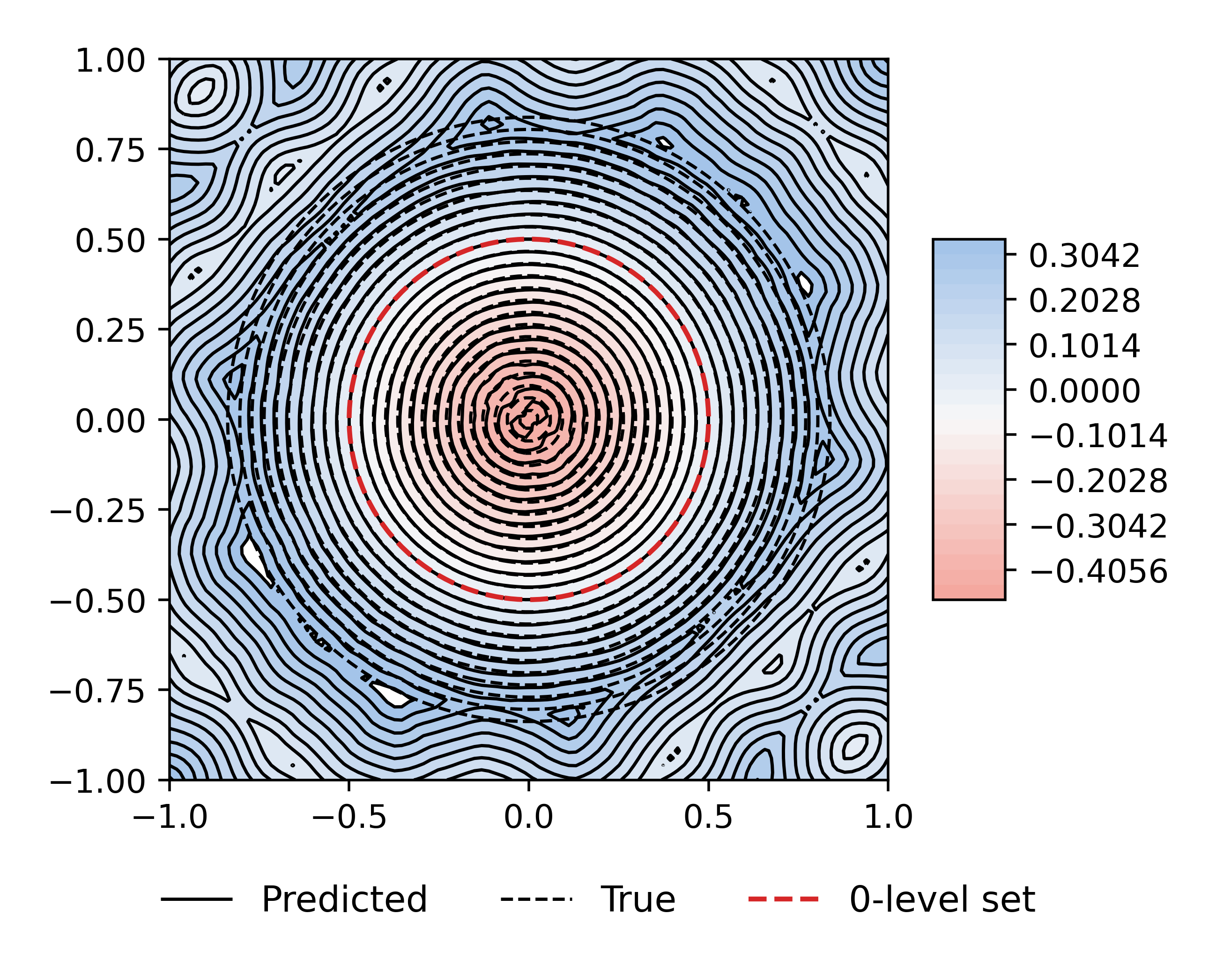} \\
			(a2) \textit{ReSDF} with $\phi_2$ & (b2) PINN with $\phi_1$ \\
			\includegraphics[height=0.35\textwidth]{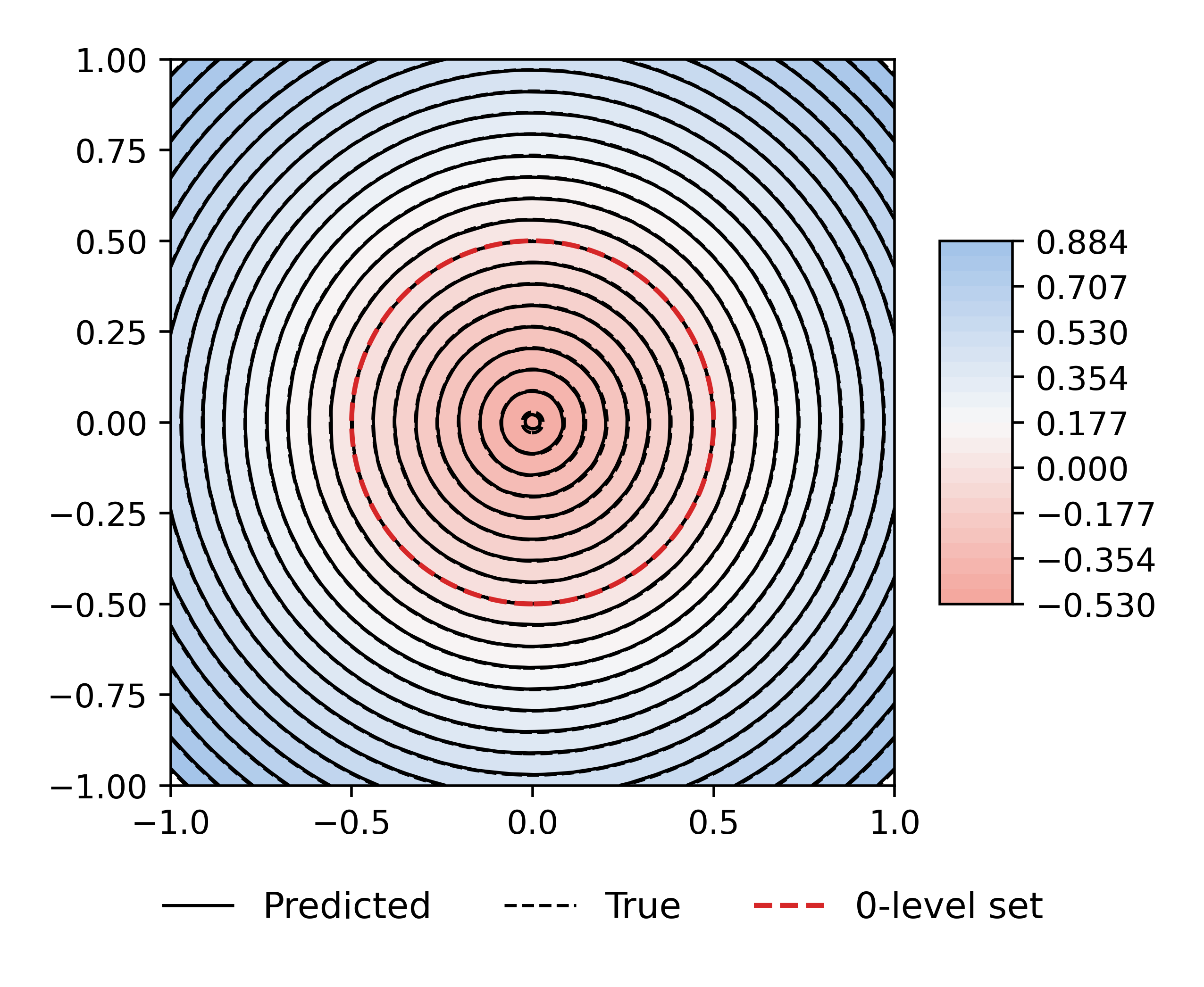} & 
			\includegraphics[height=0.35\textwidth]{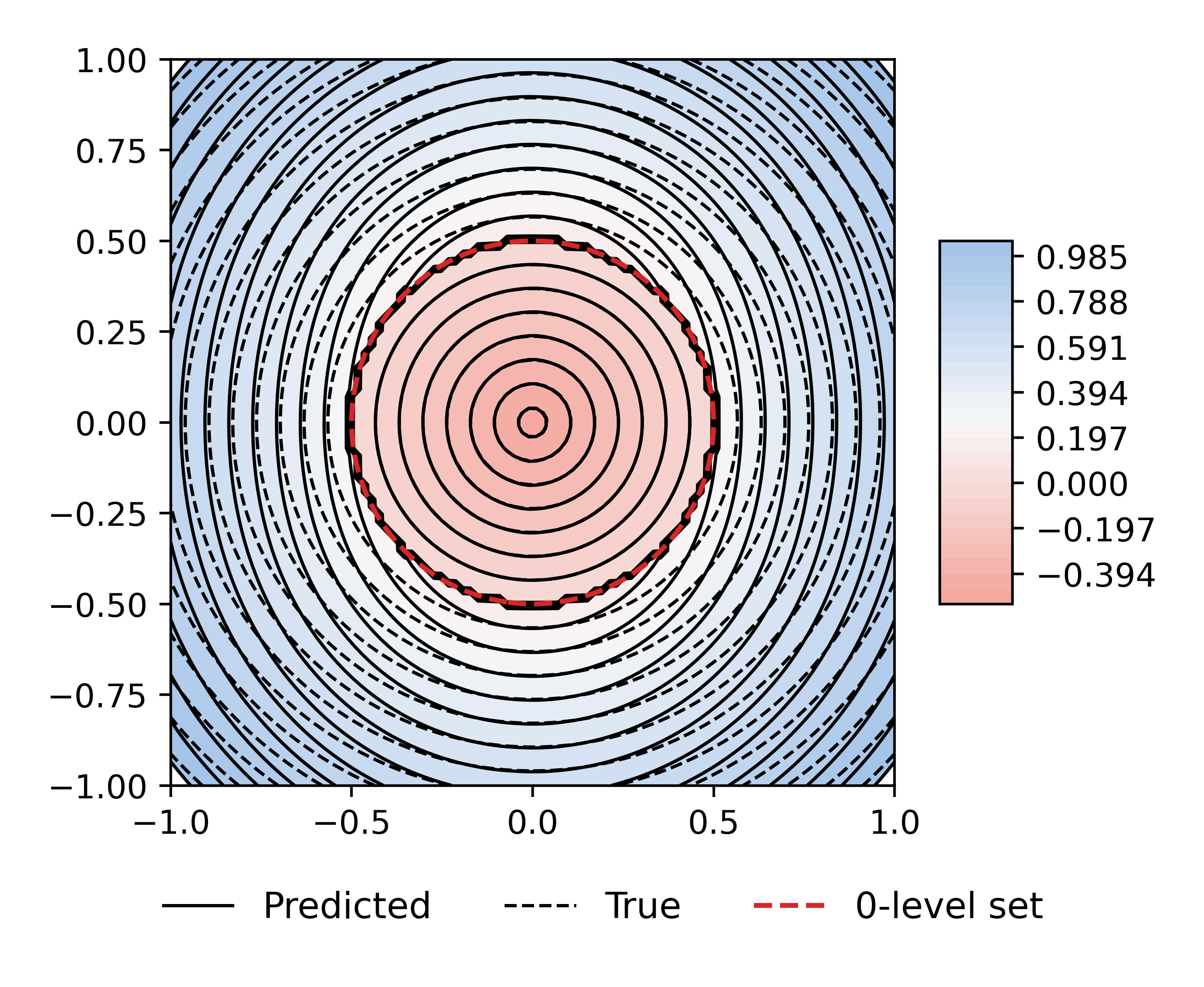} \\
			(a3) \textit{ReSDF} with $\phi_3$ & (b3) PINN with $\phi_3$
		\end{tabular}
	\end{center}
	\caption{The iso-contours of the results of \textit{ReSDF} (left) and PINN approach (right) from three level set functions, a stiff shape $\phi_1$ \eqref{eq:circ_phi_exp}, an oscillatory shape $\phi_2$ \eqref{eq:circ_phi_oscill}, and a discontinuous shape $\phi_3$ \eqref{eq:circ_phi_jump} are presented with the exact solution. The red solid curve is the zero level set of mentioned level set functions.}\label{fig:circle} 
\end{figure}

The robustness of changing the level set functions is examined by using diverse initial level set functions; a stiff shape $\phi_1$ \eqref{eq:circ_phi_exp}, an oscillatory shape $\phi_2$ \eqref{eq:circ_phi_oscill}, and a discontinuous shape $\phi_3$ \eqref{eq:circ_phi_jump}. In this case, the model is trained with a fixed width of size $64$ on collocation points $\Omega_7$. The results are compared with that of the PINN approach. The accuracy of two models is listed in Table \ref{tab:acc_circ_phis}. The results of \textit{ReSDF} have a smaller error than the results of the PINN approach by a factor of $10^{-3}$ and $10^{-2}$ in the $L^2$ and $L^\infty$ norms, respectively. In order to present visual differences, we compare the predictions of the trained \textit{ReSDF} and PINN against the exact SDF in Figure \ref{fig:circle}. It shows a similar result with $\phi_1$. However, the PINN approach fails to predict SDF with $\phi_2$ and $\phi_3$.
% Since \textit{ReSDF} use $\phi$ in the shortest path objective, the error increases for the discontinuous $\phi$, but still has a small value with the order of $10^{-3}$.

\subsection*{Example 2}\label{subsec:square}

\begin{figure}
	\begin{center}
		\begin{tabular}{cc}
			\includegraphics[height=0.35\textwidth]{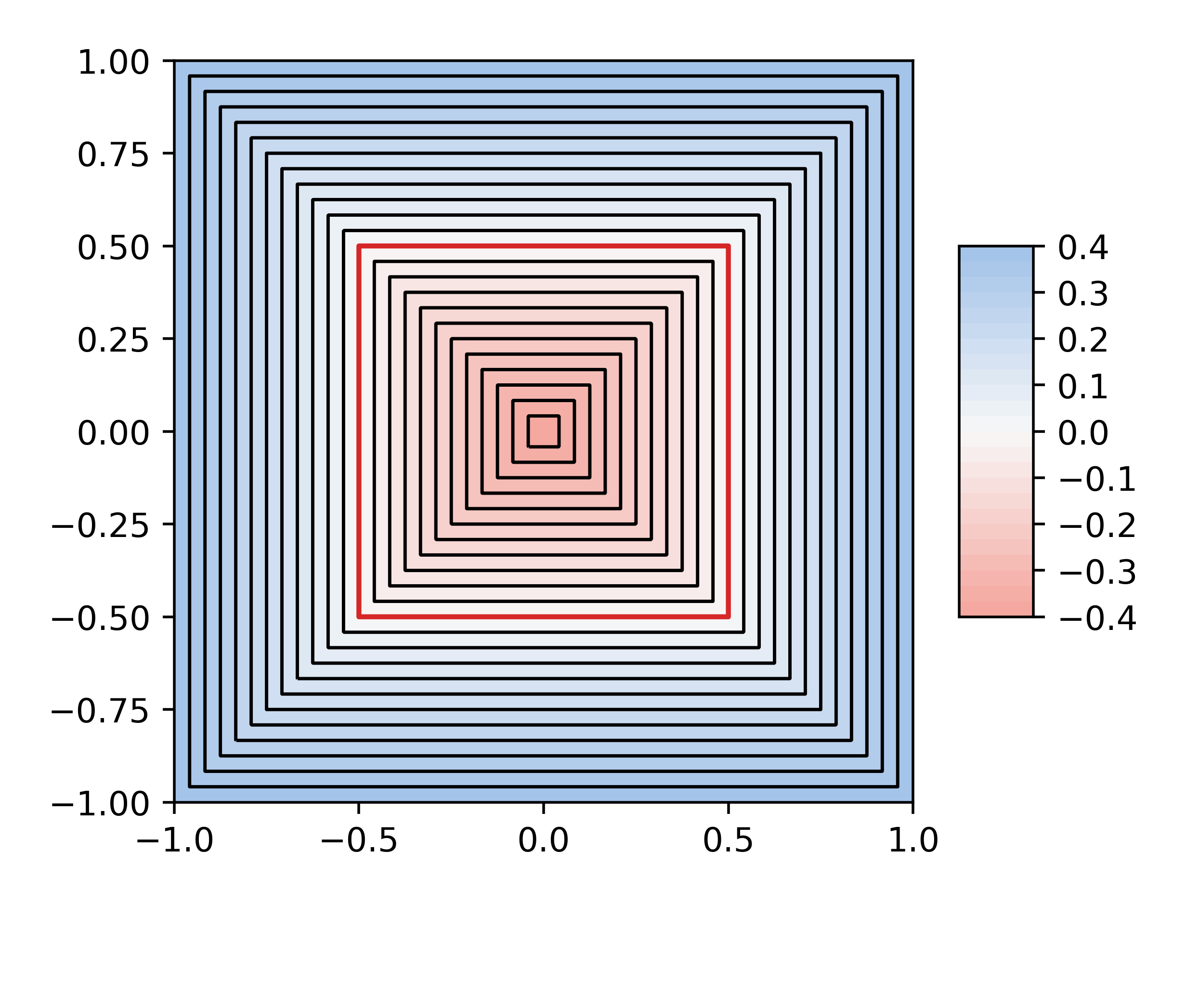} &
			\includegraphics[height=0.35\textwidth]{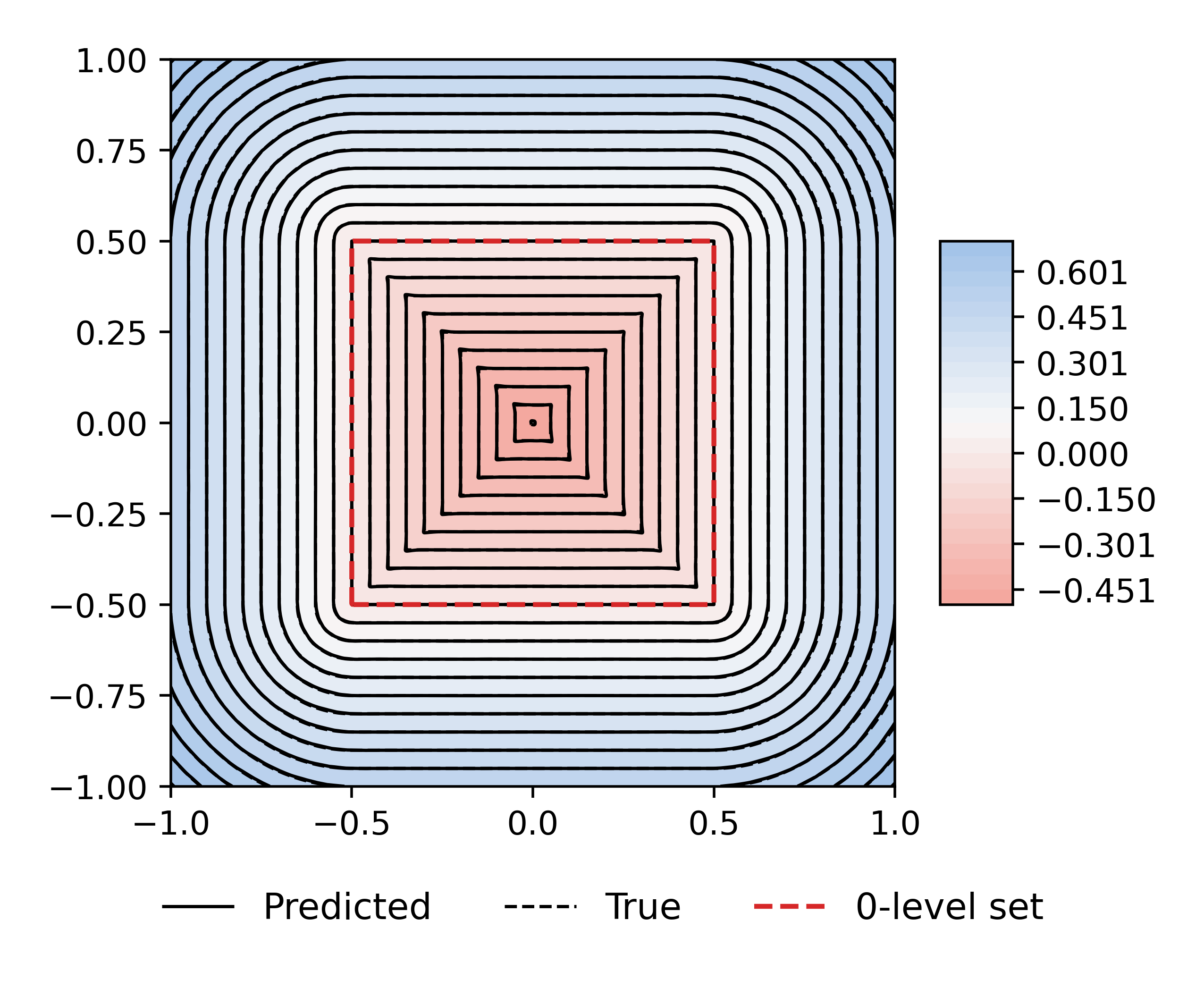} \\
			(a) $\phi_4$ & (b) \textit{ReSDF} with $\phi_4$
		\end{tabular}
	\end{center}
	\caption{The iso-contours of the level set function $\phi_4$ \eqref{eq:sq_phi} (left) and the results of \textit{ReSDF} with $\phi_4$ (right) are presented with the exact solution. The red solid curve is the zero level set of $\phi_4$.} \label{fig:rectangle}
\end{figure}

\begin{table}
    \centering
    \setlength\tabcolsep{8pt}
    \caption{The $L^2$ and $L^\infty$ errors between exact solution and predicted SDF of \textit{ReSDF} of \textit{ReSDF} with $\phi_4$ \eqref{eq:sq_phi} or the second order FMM ($\text{FMM}^2$).
    Errors between the gradient of the exact solution $\bn$ and the predicted gradient are also reported.} \label{tab:sq}
    \vspace{2pt}
    \scalebox{0.95}{
    \begin{tabular}{cccccc}
    \toprule 
    Method & $\cD$ & $\left\Vert u_\theta - u\right\Vert_{2}$ & $\left\Vert u_\theta - u\right\Vert_{\infty}$ & $\left\Vert V_\theta -\bn\right\Vert_{2}$ & $\left\Vert V_\theta -\bn\right\Vert_{\infty}$ \\
    \midrule
   \textit{ReSDF} & $\Omega_6$ & $2.7\cdot10^{-4}$ &  $1.0\cdot10^{-2}$ & $3.6\cdot10^{-2}$ &  $1.0\cdot10^{0}$  \\
   $\text{FMM}^2$ & $\Omega_6$ & $5.1\cdot10^{-4}$ & $6.3\cdot10^{-3}$ & $5.7\cdot10^{-2}$ & $7.1\cdot10^{-2}$ \\
   $\text{FMM}^2$ & $\Omega_8$ & $1.1\cdot10^{-4}$ & $1.5\cdot10^{-3}$ & $2.8\cdot10^{-2}$ & $7.1\cdot10^{-2}$ \\
    \bottomrule
  \end{tabular}}
\end{table}  

For a square-shaped interface, we would like to compare the result of \textit{ReSDF} with the level set function $\phi_4$ \eqref{eq:sq_phi} with the $\text{FMM}^2$. The network is a $4$-layer fully connected network with width $128$ which is trained on the collocation points $\Omega_6$. In Figure \ref{fig:rectangle}, the iso-contours of the level set function $\phi_4$ is presented and the exact solution and the result of \textit{ReSDF} are shown. As shown in Figure \ref{fig:rectangle}, the exact SDF contains discontinuities of the gradient along the diagonal lines of the zero level set and the sharp corners become rounded in the outer region. The singularities arising from these sharp corners make it difficult to approximate the exact SDF. The numerical result depicted in Figure \ref{fig:rectangle} confirms that the proposed model approximates both the smooth outer region and the sharp inner points. In order to see how good numerical solution it is, we compare the results with the second order FMM ($\text{FMM}^2$) in Table~\ref{tab:sq}. The $L^2$ errors of the SDF and its gradient predicted by \textit{ReSDF} on $\Omega_6$ are between the $L^2$ errors of the result of $\text{FMM}^2$ on $\Omega_6$ and $\Omega_8$. It means that the accuracy of \textit{ReSDF} on a small number of collocation points is comparable to $\text{FMM}^2$ on a large number of collocation points and it is consistent with previous findings in \nameref{subsec:circle}. 
% \textcolor{red}{Hopefully, you can obtain a fantastic result on the quadtree-based adaptive grid approach. Then, you can also test the example with $\phi_4$, $phi_8$, and $phi_9$ to merge this section into Example 4.}

% \begin{figure}
% 	\begin{center}
% 		\begin{tabular}{cc}
% 			\includegraphics[height=0.35\textwidth]{} &
% 			\includegraphics[height=0.35\textwidth]{} \\
% 			\textit{ReSDF} & Second-order FMM
% 		\end{tabular}
% 	\end{center}
% 	\caption{\textbf{Square}: Pointwise errors of the approximated gradient by \textit{ReSDF} on $\Omega_6$ grid (left) and the second-order FMM on $\Omega_8$ grid (right). } \label{fig:rectangle_heatmap}
% \end{figure}

\subsection*{Example 3}\label{subsec:irr}

\begin{figure}
	\begin{center}
		\begin{tabular}{cc}
		    \includegraphics[height=0.35\textwidth]{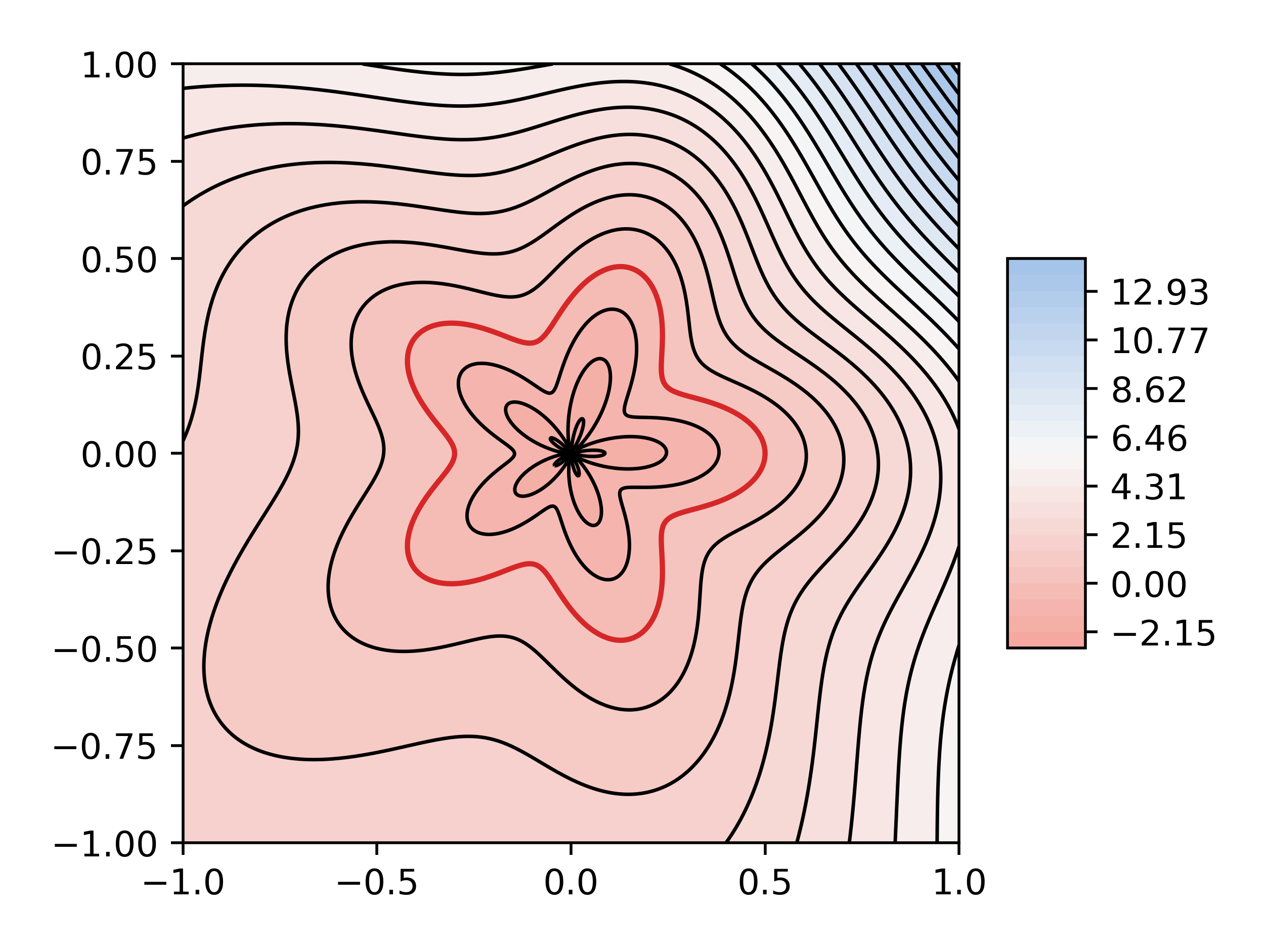} &
		    \includegraphics[height=0.35\textwidth]{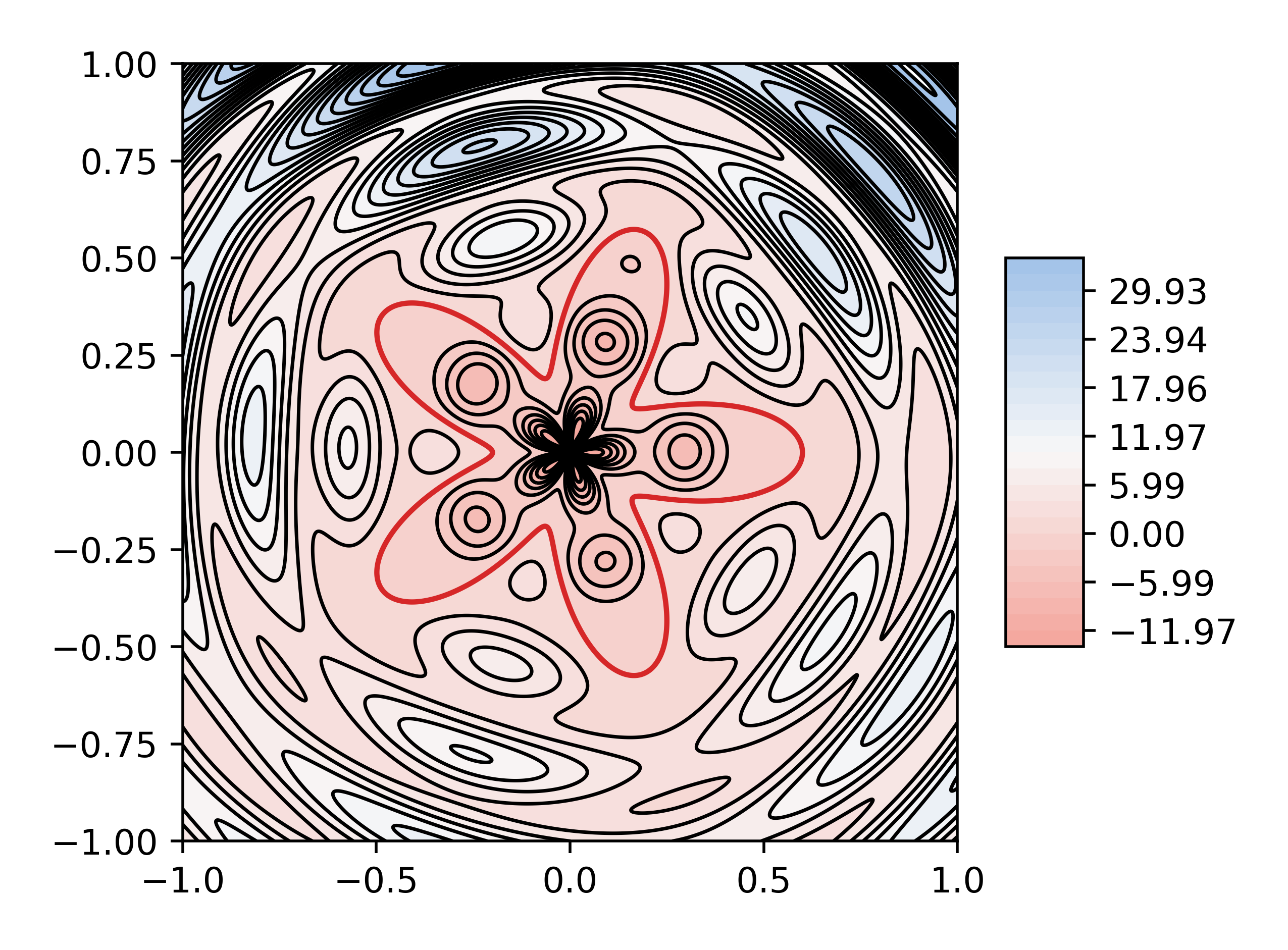} \\
		    (a) $\phi_5^{0.5}$ & (b) $\phi_5^{1}$ \\
		    \includegraphics[height=0.35\textwidth]{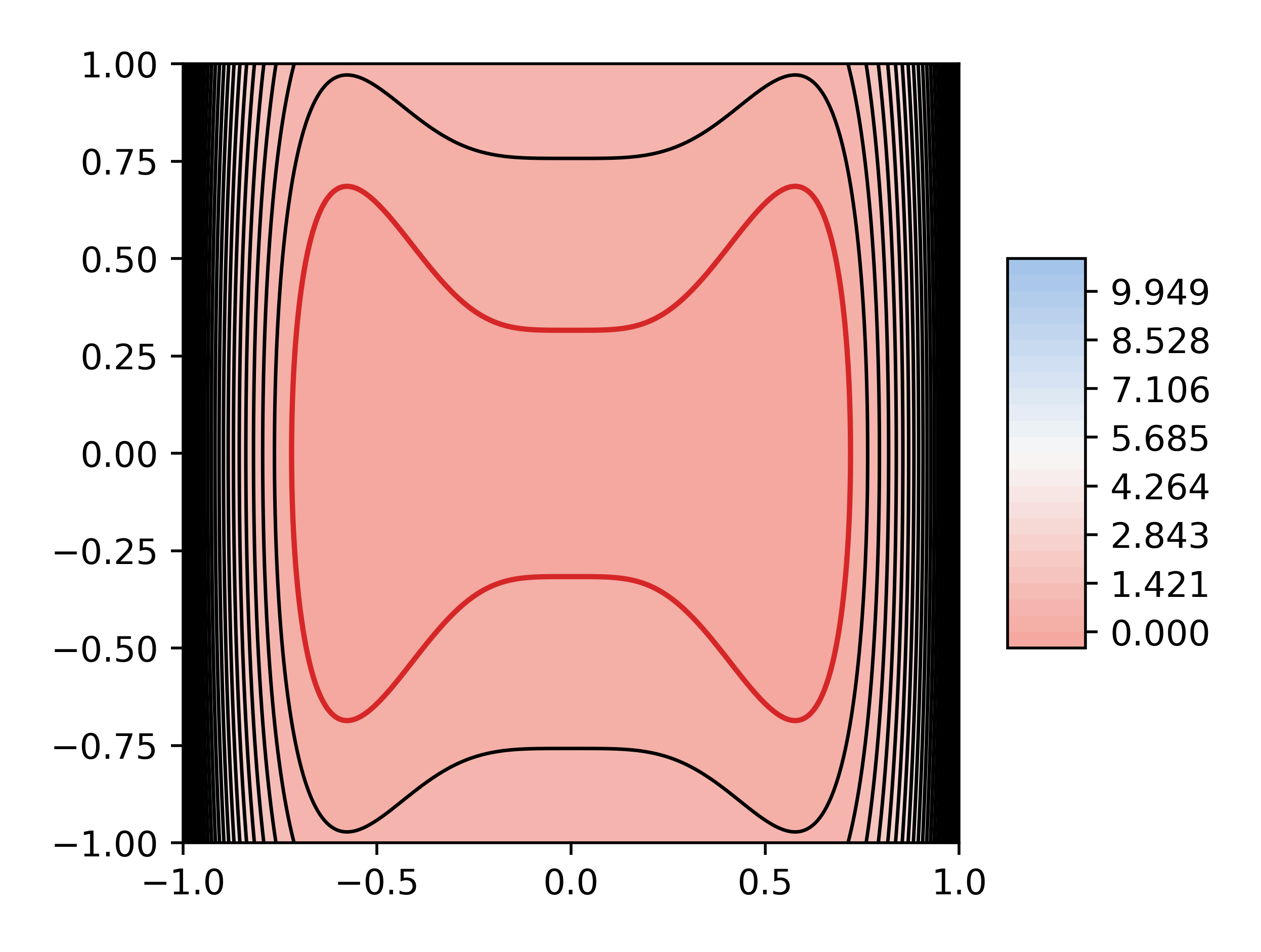} & 
		    \includegraphics[height=0.35\textwidth]{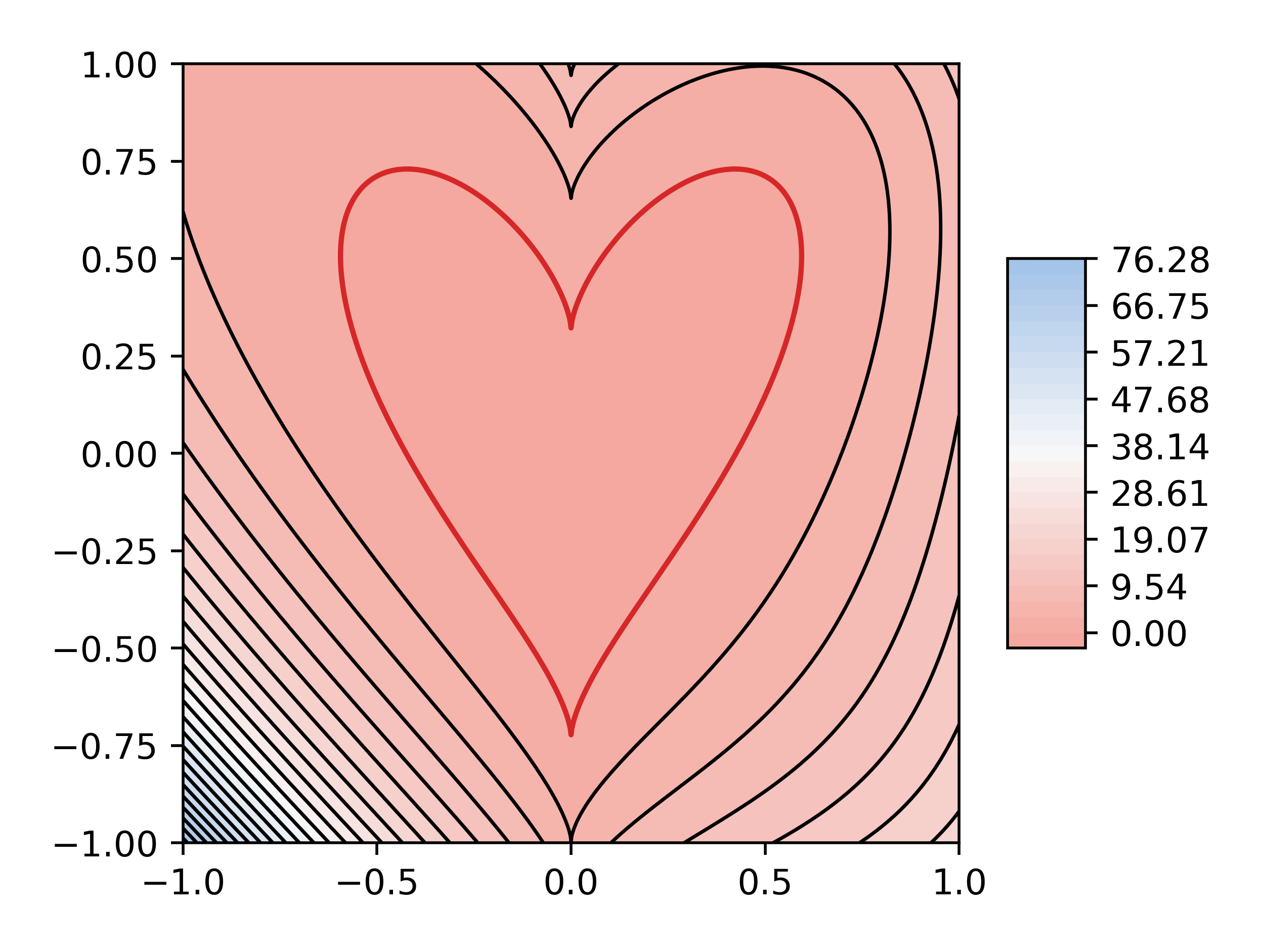} \\
		    (c) $\phi_6$ & (d) $\phi_7$
		\end{tabular}
	\end{center}
	\caption{The iso-contours of the level set functions, $\phi_\alpha^{0.5}$ \eqref{eq:flower} with $\alpha = 0.5$ and $1$, $\phi_6$ \eqref{eq:dumbbell}, and $\phi_7$ \eqref{eq:heart} are presented. } \label{fig:phi_irr}
\end{figure}

To demonstrate that the proposed model can be applied to various interfaces, the level set functions, $\phi_5^{\alpha}$ \eqref{eq:flower} with $\alpha = 0.5$ and $1$, $\phi_6$ \eqref{eq:dumbbell}, and $\phi_7$ \eqref{eq:heart}, are used to be reinitialized; see in Figure \ref{fig:phi_irr}. Due to the rapid change of the gradient field of the level set functions $\phi_5^{1}$ and $\phi_6$, the clipping parameter $M=5$ is used to improve the representability of $V_\theta$. Other parameters are consistent with other cases. 

\begin{figure}
	\begin{center}
		\begin{tabular}{cc}
		    \includegraphics[height=0.35\textwidth]{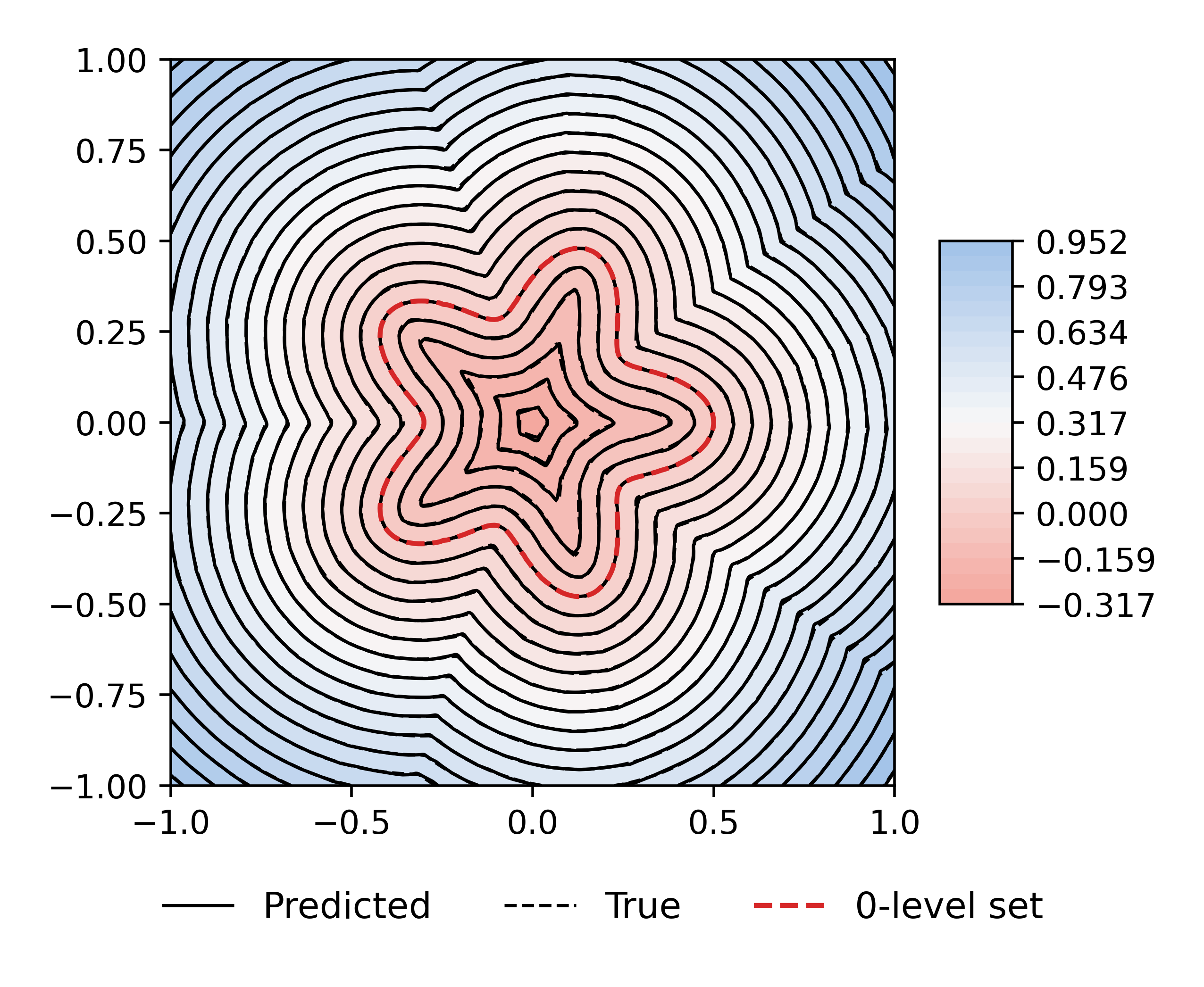} &
		    \includegraphics[height=0.35\textwidth]{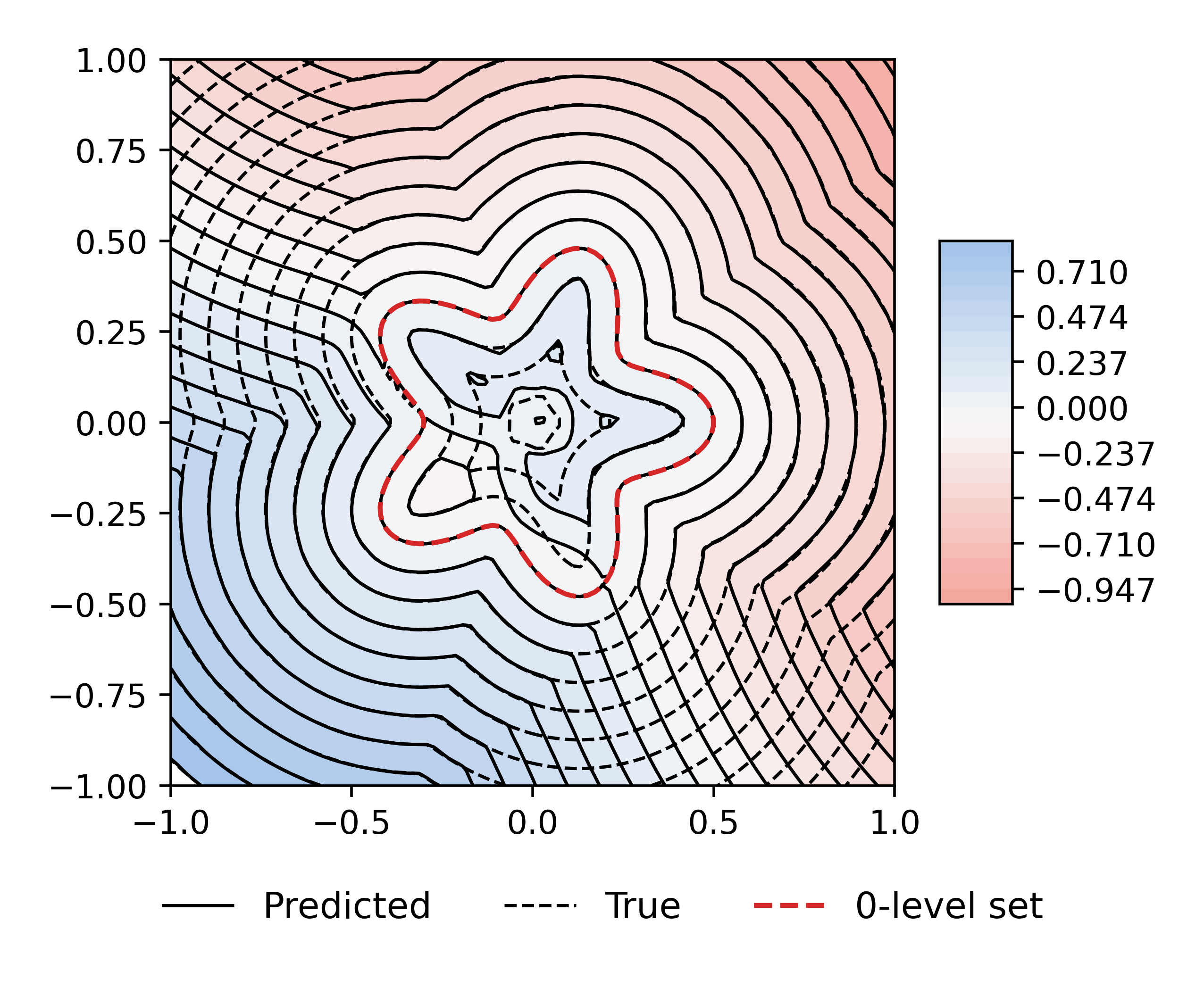} \\
		    (a1) \textit{ReSDF} with $\phi_5^{0.5}$ & (b1) PINN with $\phi_5^{0.5}$ \\
		    \includegraphics[height=0.35\textwidth]{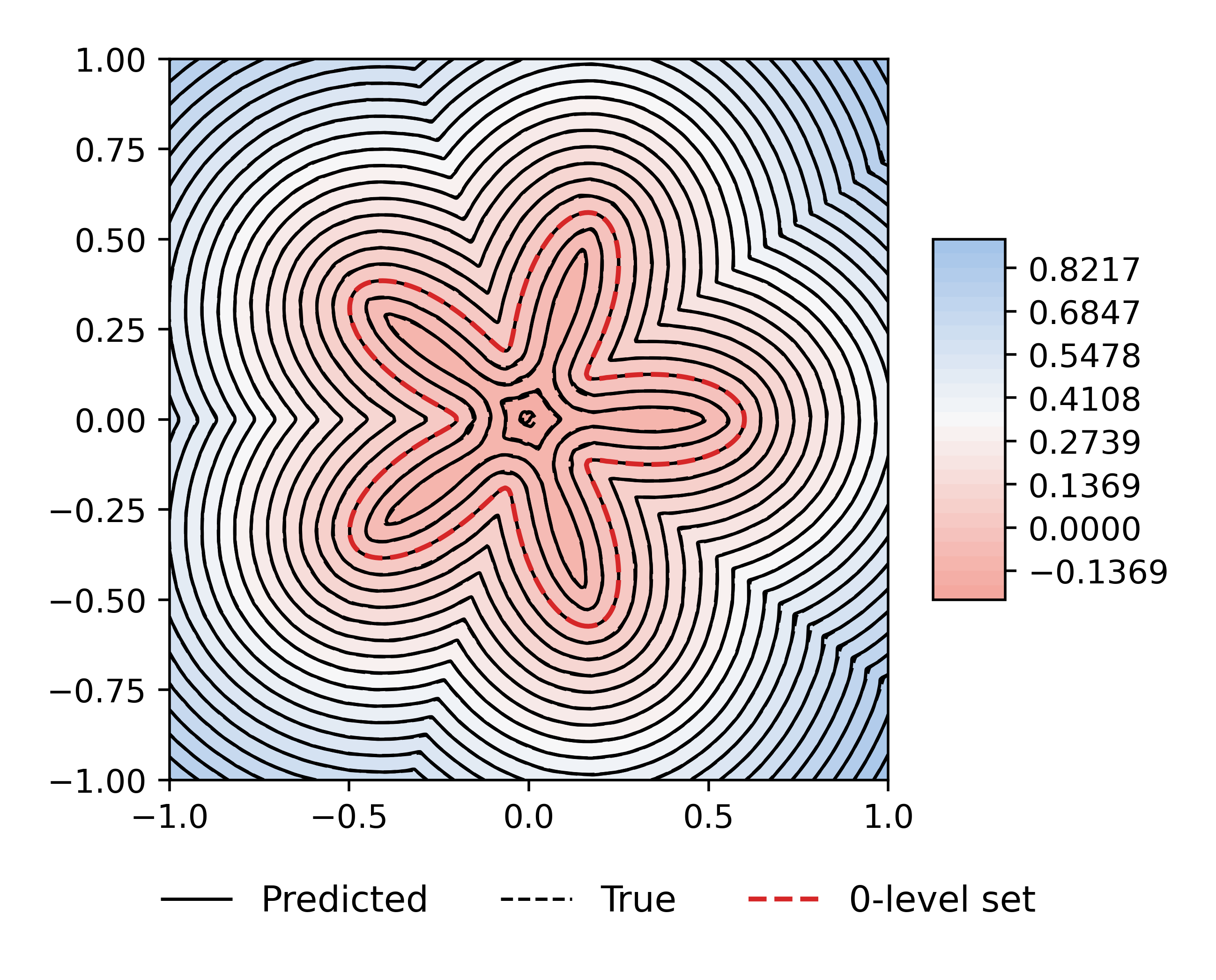} & 
		    \includegraphics[height=0.35\textwidth]{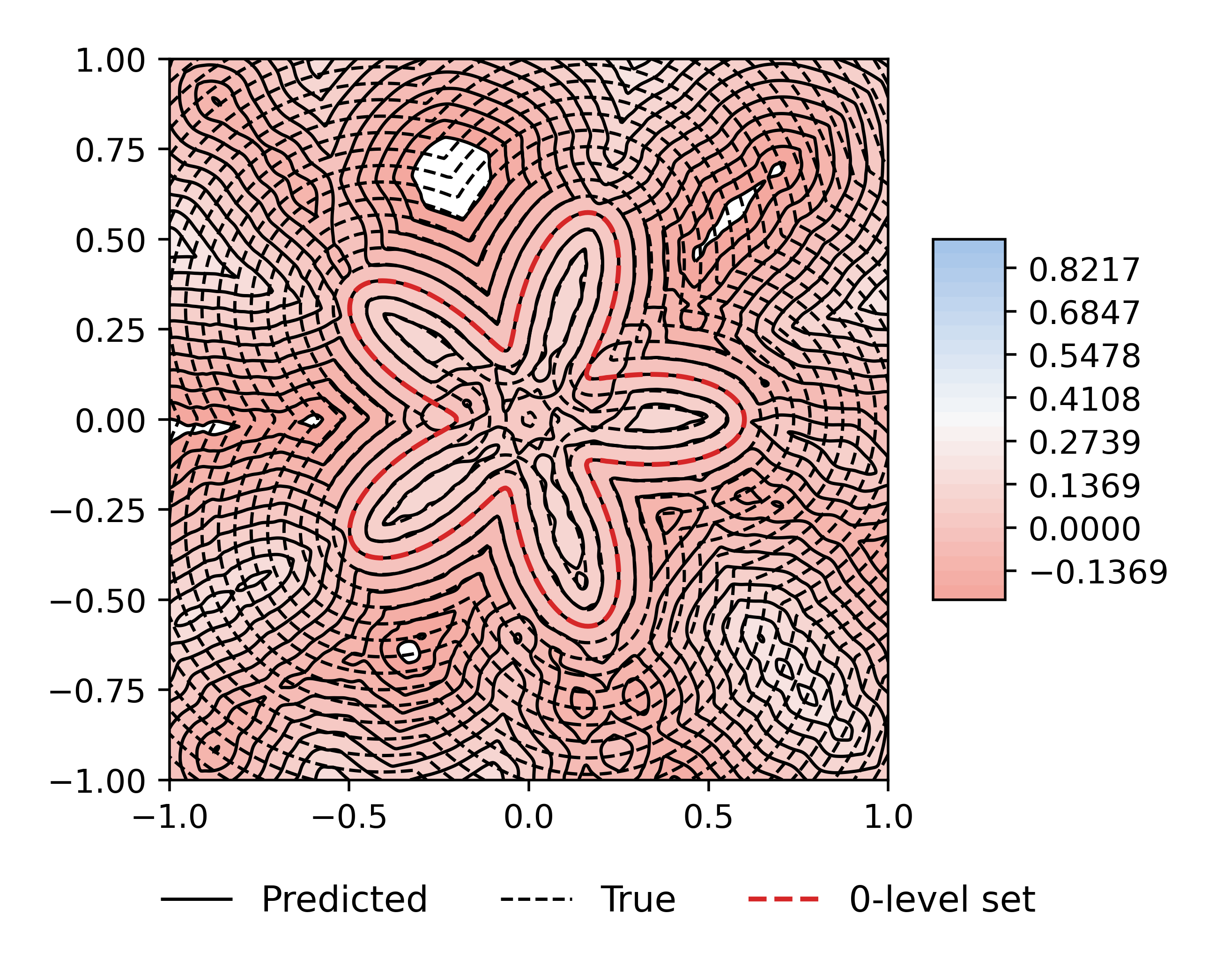} \\
		    (a2) \textit{ReSDF} with $\phi_5^{1}$ & (b2) PINN with $\phi_5^{1}$ \\
		    \includegraphics[height=0.35\textwidth]{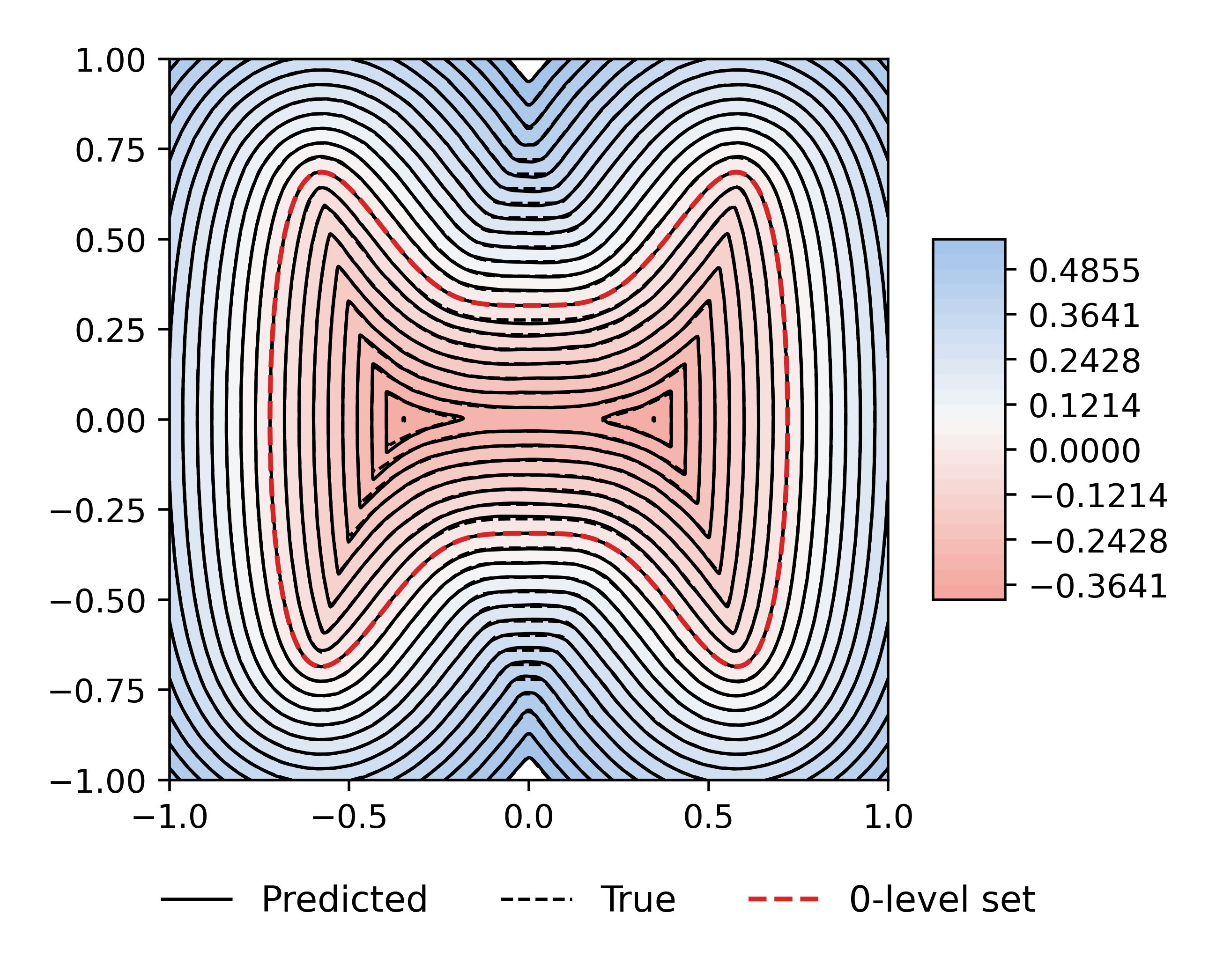} & 
		    \includegraphics[height=0.35\textwidth]{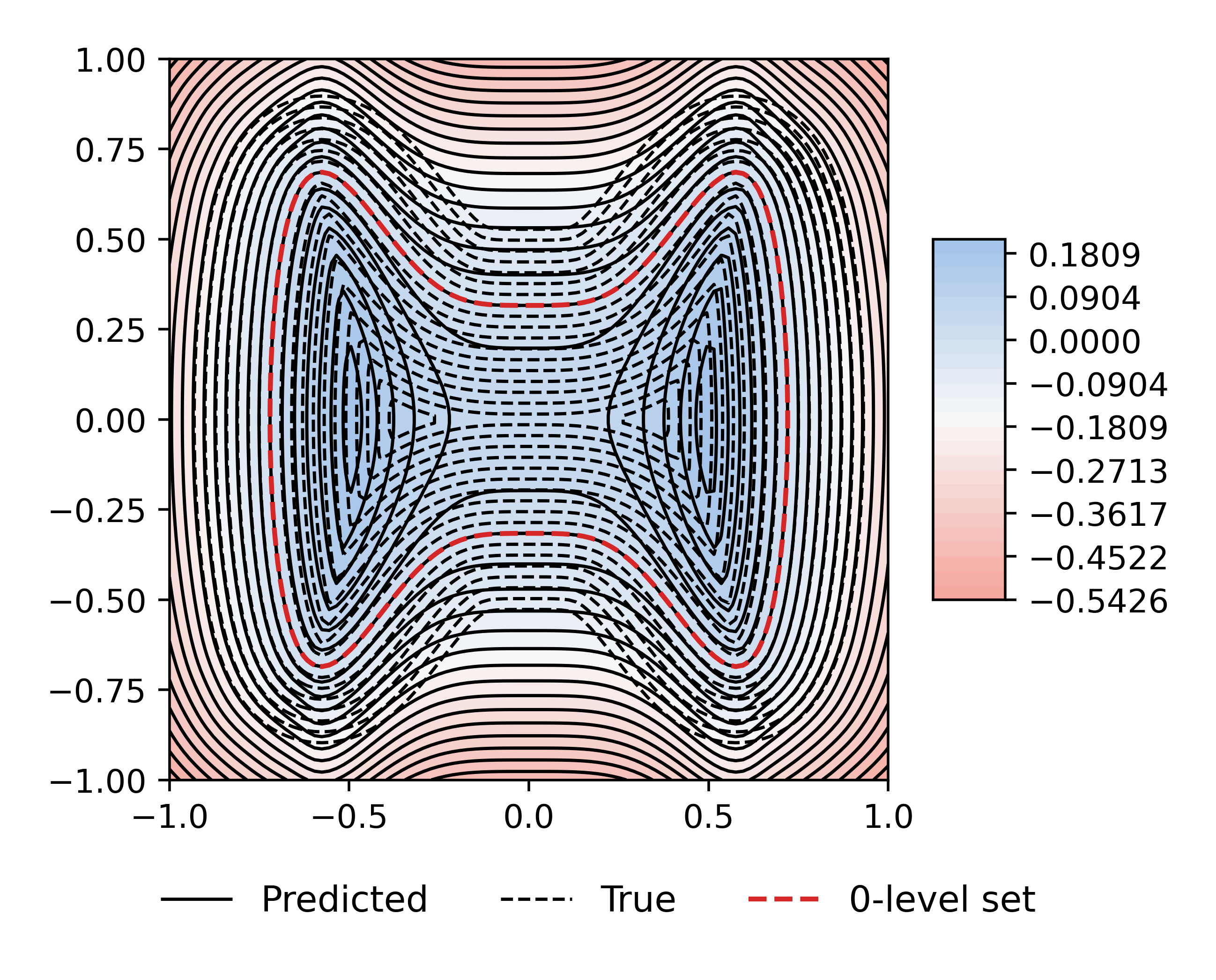} \\
		    (a3) \textit{ReSDF} with $\phi_6$ & PINN with $\phi_6$
		\end{tabular}
	\end{center}
	\caption{The iso-contours of the results of \textit{ReSDF} (left) and PINN approach (right) from three level set functions, $\phi_5^{\alpha}$ \eqref{eq:flower} with $\alpha = 0.5$ and $1$ and $\phi_6$ \eqref{eq:dumbbell} are presented with the approximated exact solution. The red solid curve is the zero level set of mentioned level set functions.} \label{fig:comp_irr}
\end{figure}

\begin{figure}
	\begin{center}
		\begin{tabular}{cc}
			\includegraphics[height=0.35\textwidth]{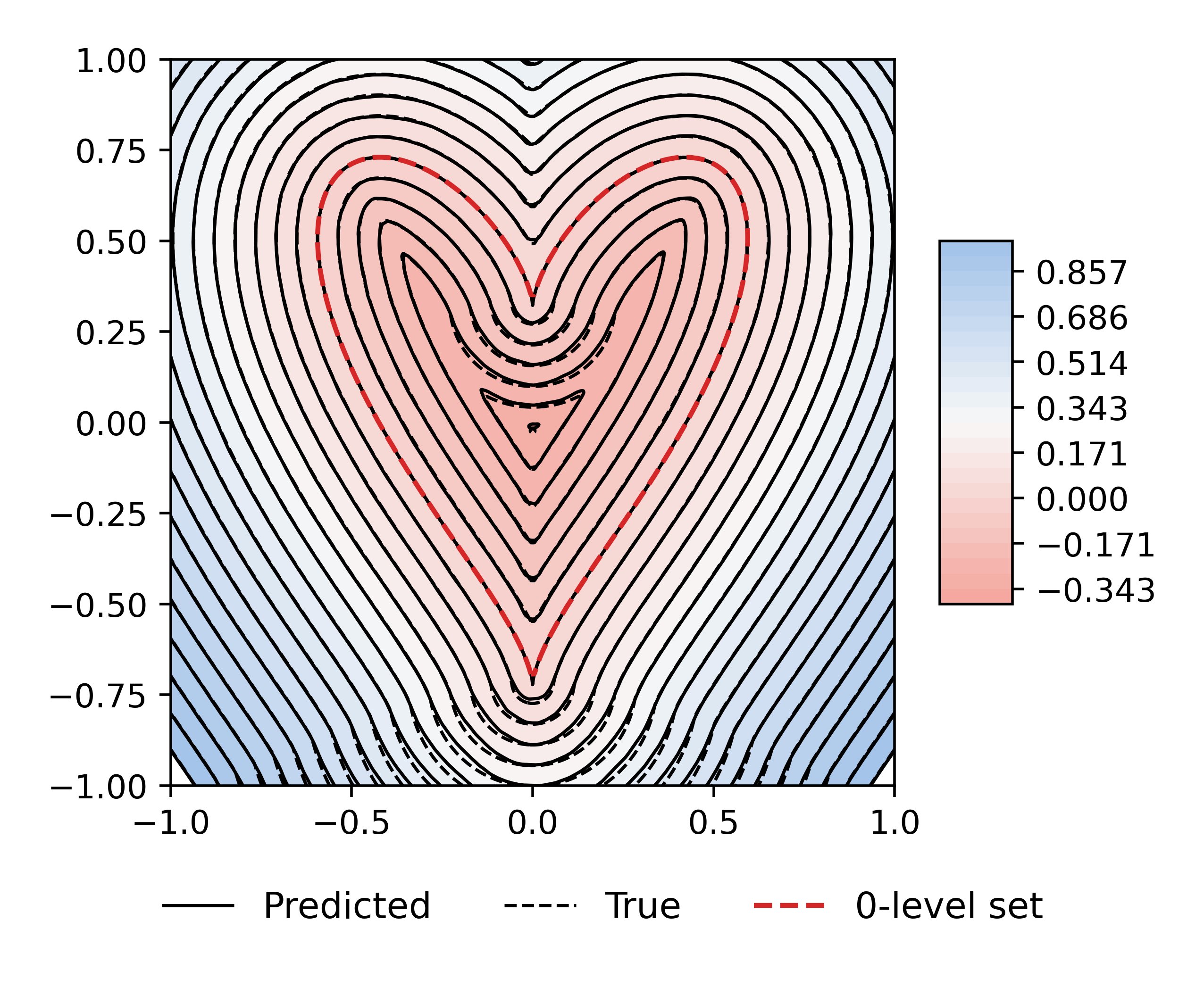} & 
			\includegraphics[height=0.35\textwidth]{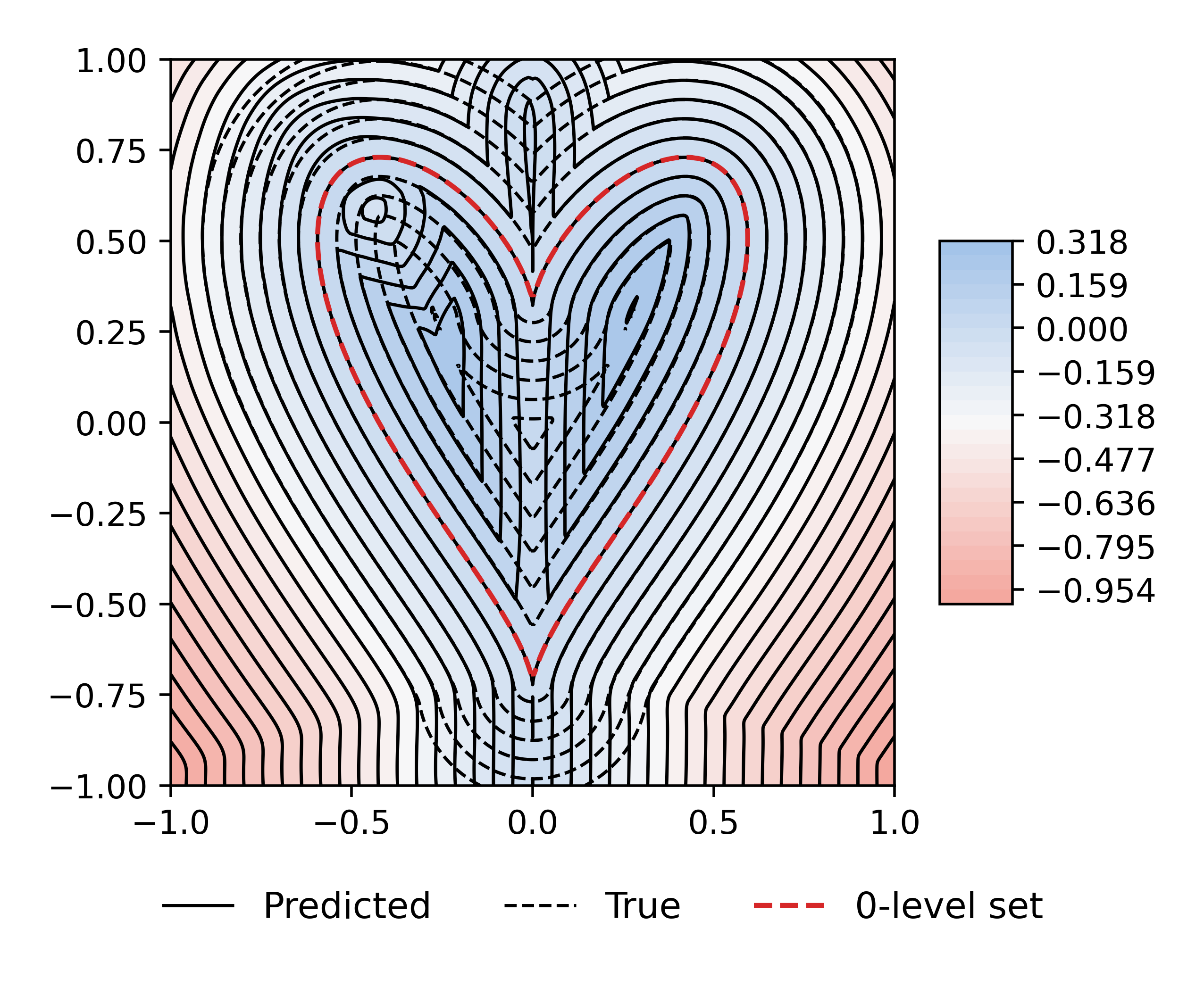} \\
			\textit{ReSDF} on $\Omega_6$ & PINN on $\Omega_7$\\
			\includegraphics[height=0.35\textwidth]{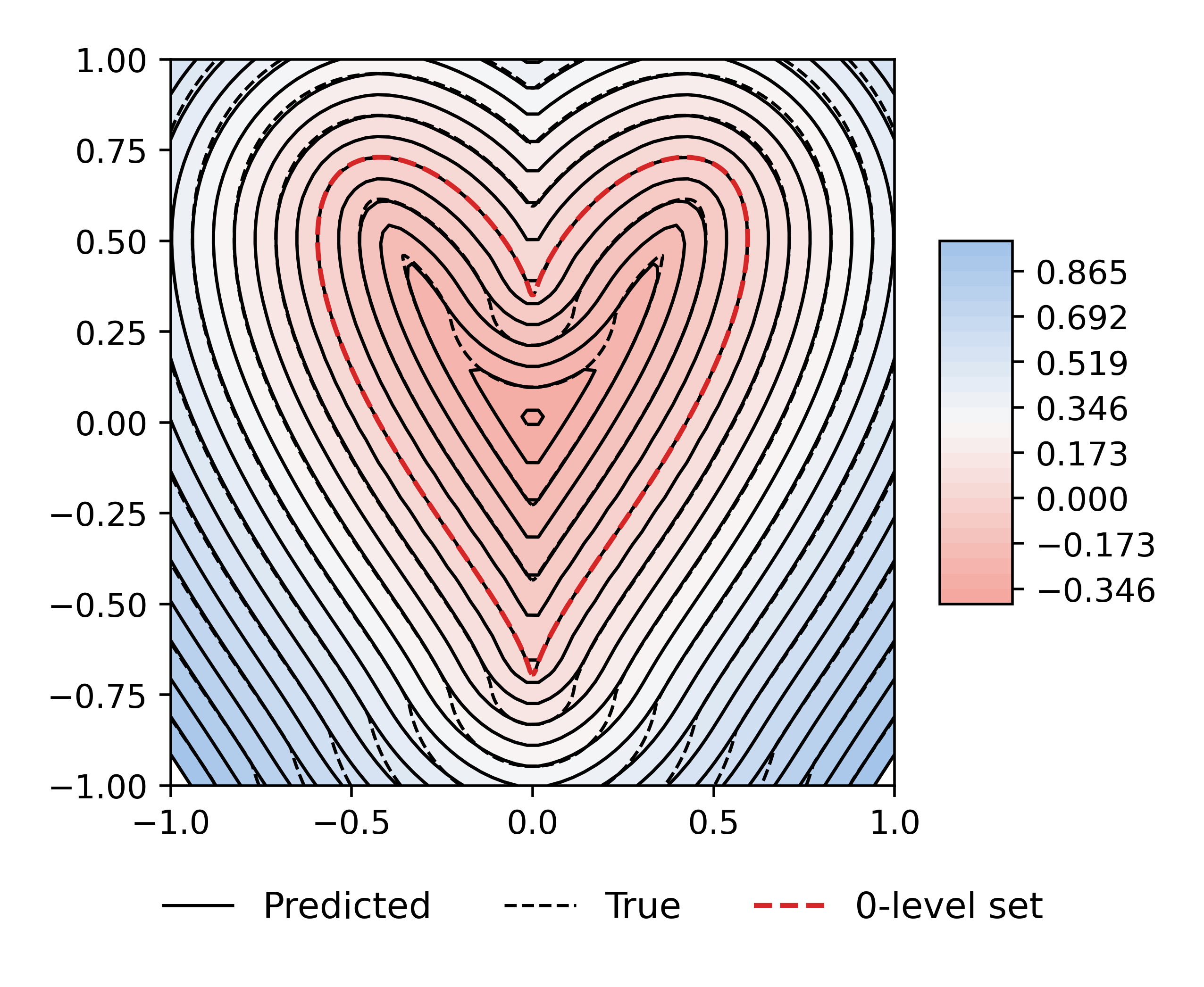} & 
			\includegraphics[height=0.35\textwidth]{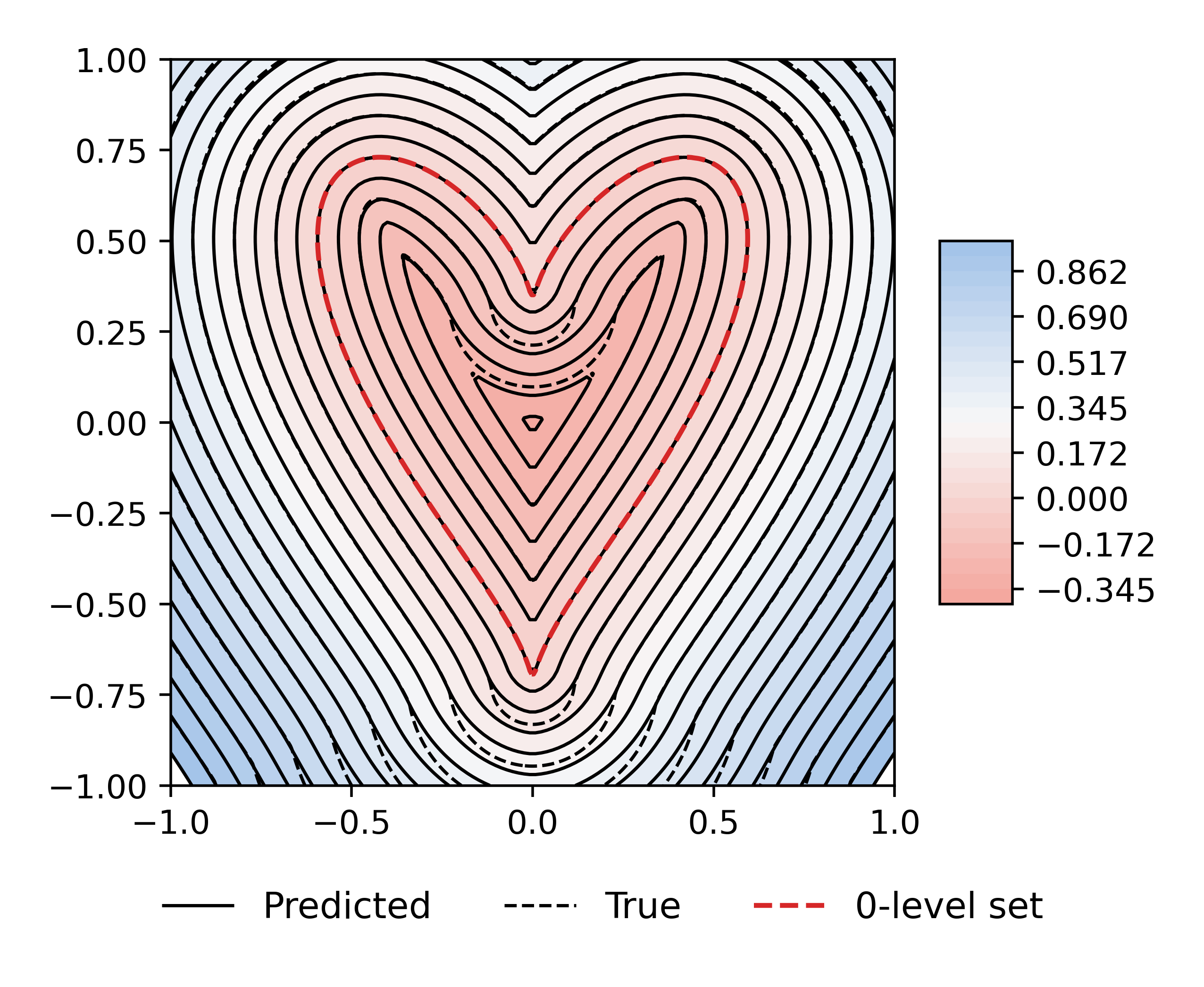} \\
			FMM on $\Omega_6$ & FMM on $\Omega_7$ \\
			\includegraphics[height=0.35\textwidth]{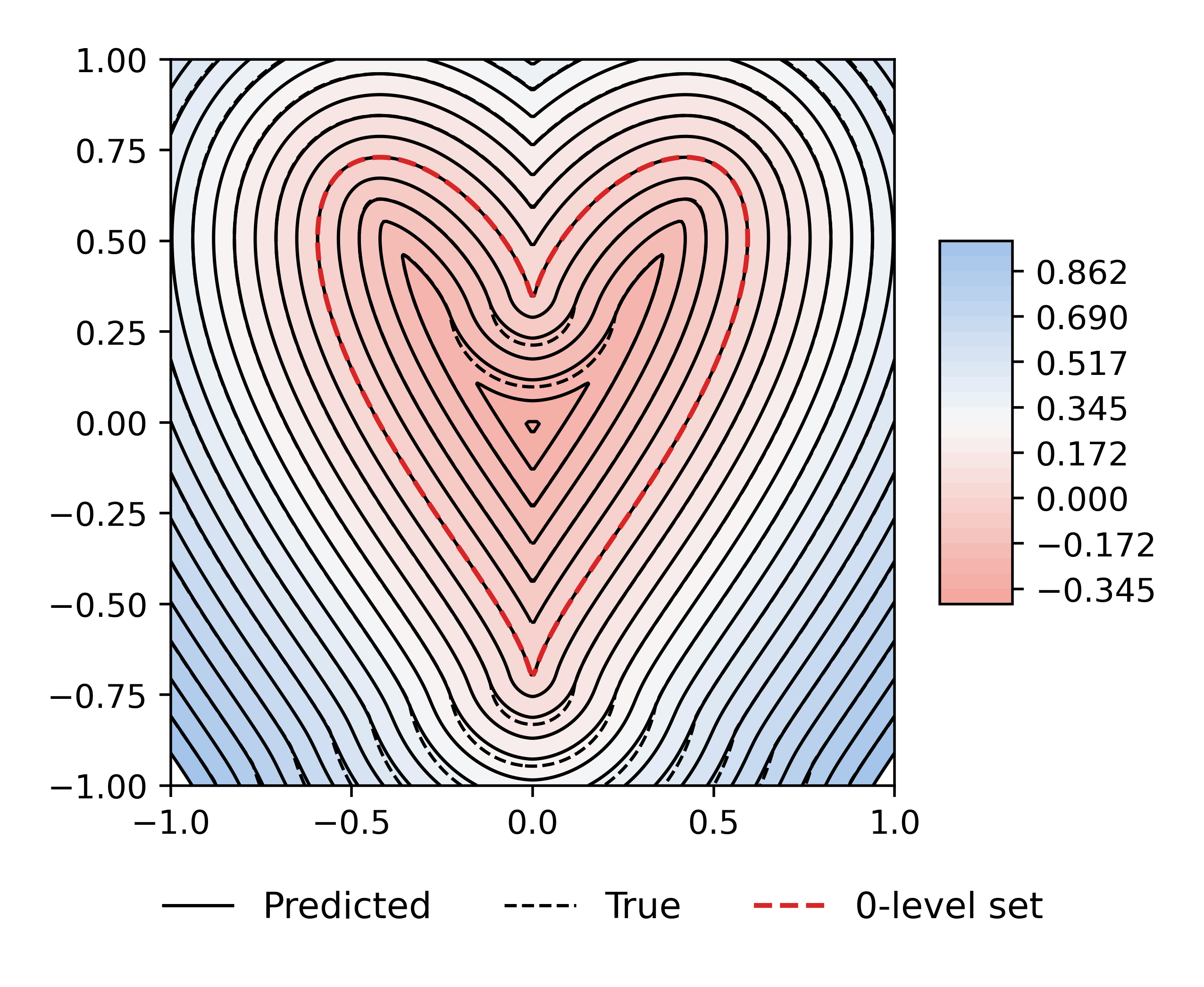} &
			\includegraphics[height=0.35\textwidth]{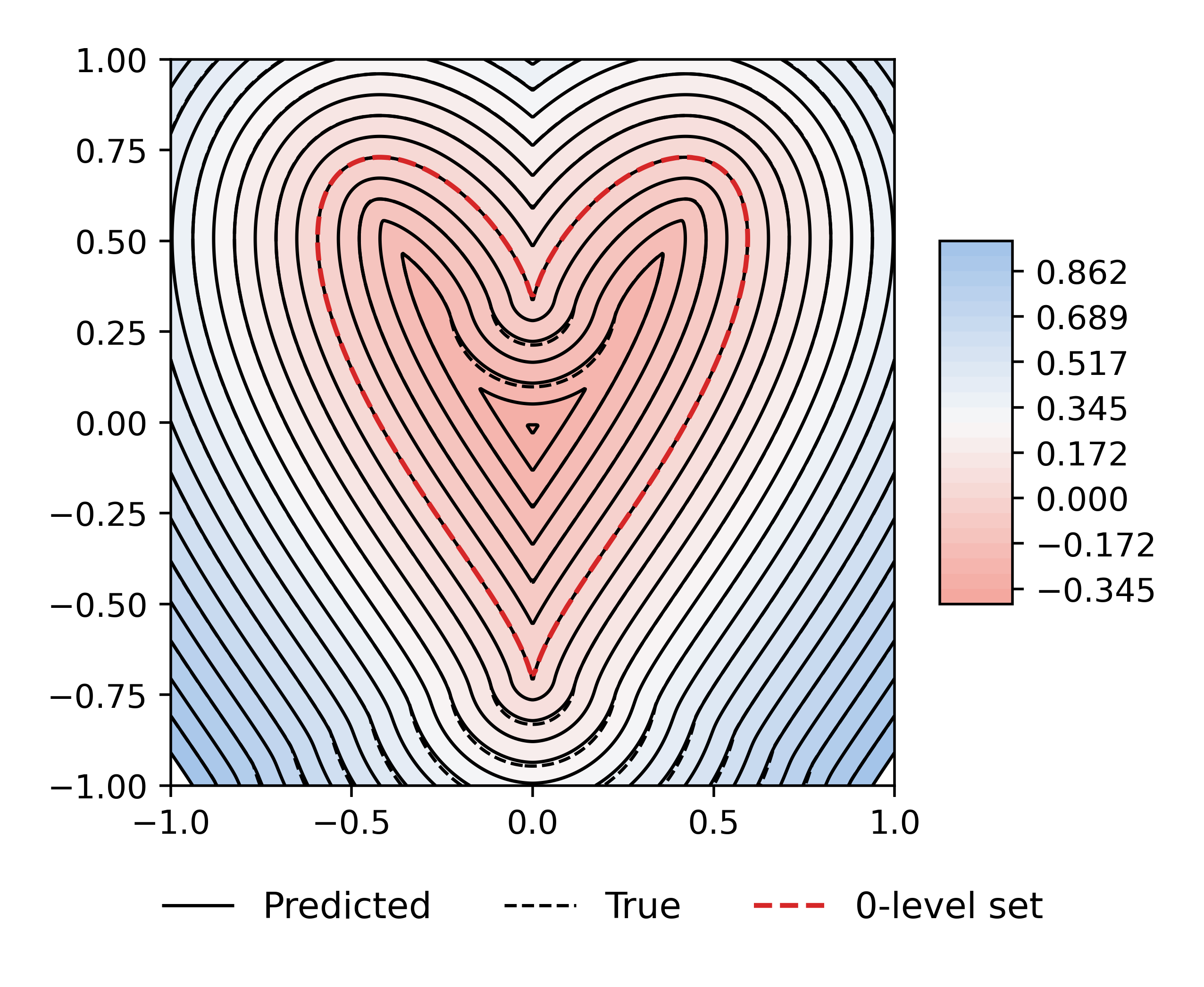} \\
			FMM on $\Omega_8$ & FMM on $\Omega_9$
		\end{tabular}
	\end{center}
	\caption{The result of \textit{ReSDF} with the level set function $\phi_7$ \ref{eq:heart} on $\Omega_6$ is presented and compared to PINN approach. The results of using FMM on $\Omega_n$, $n=6,\ldots,9$ are also presented.} \label{fig:heart_FMM}
\end{figure}

\begin{figure}
	\begin{center}
		\begin{tabular}{cc}
			\includegraphics[height=0.35\textwidth]{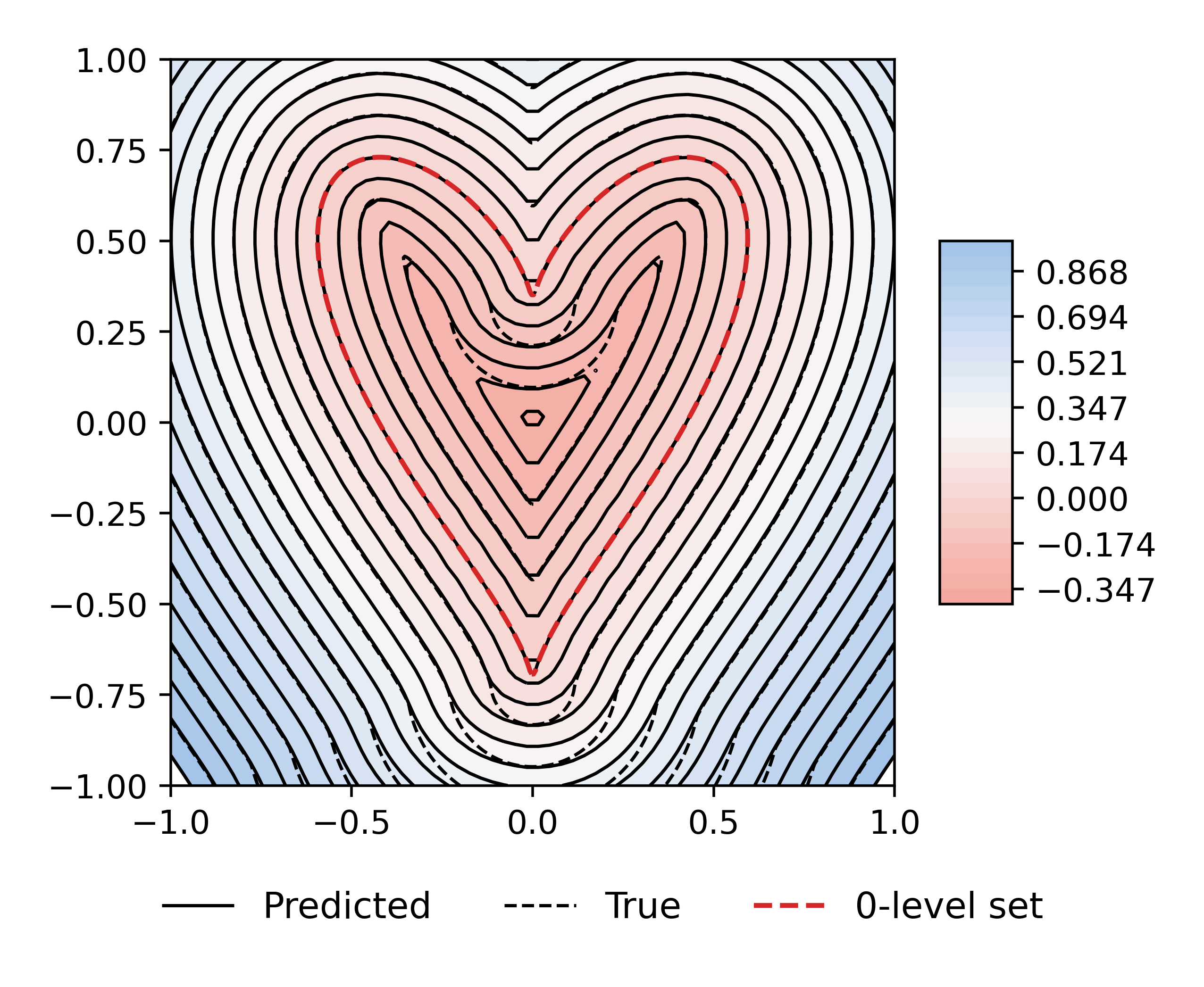} & 
			\includegraphics[height=0.35\textwidth]{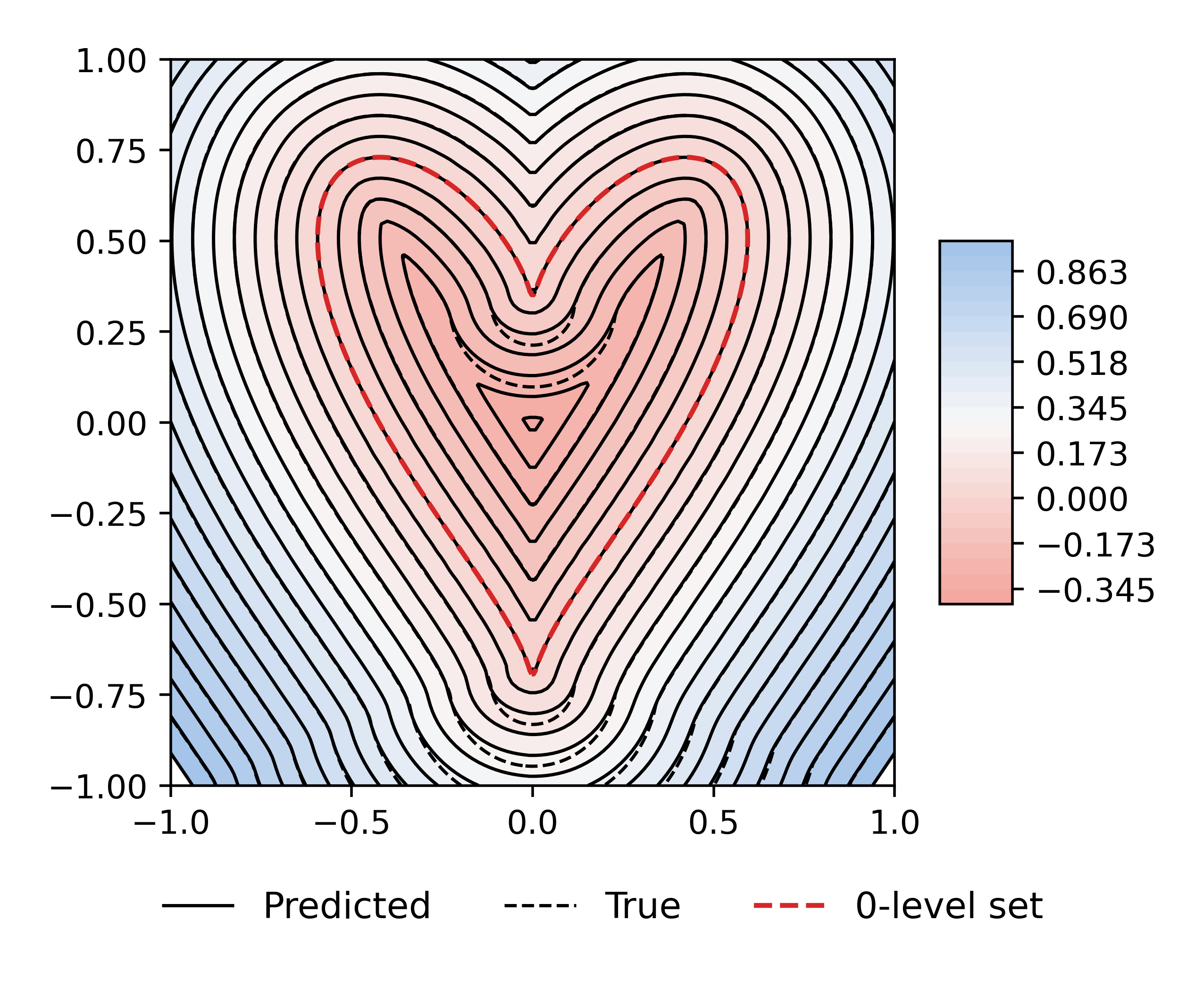} \\
			$\text{FMM}^2$ on $\Omega_6$ & $\text{FMM}^2$ on $\Omega_7$ \\
			\includegraphics[height=0.35\textwidth]{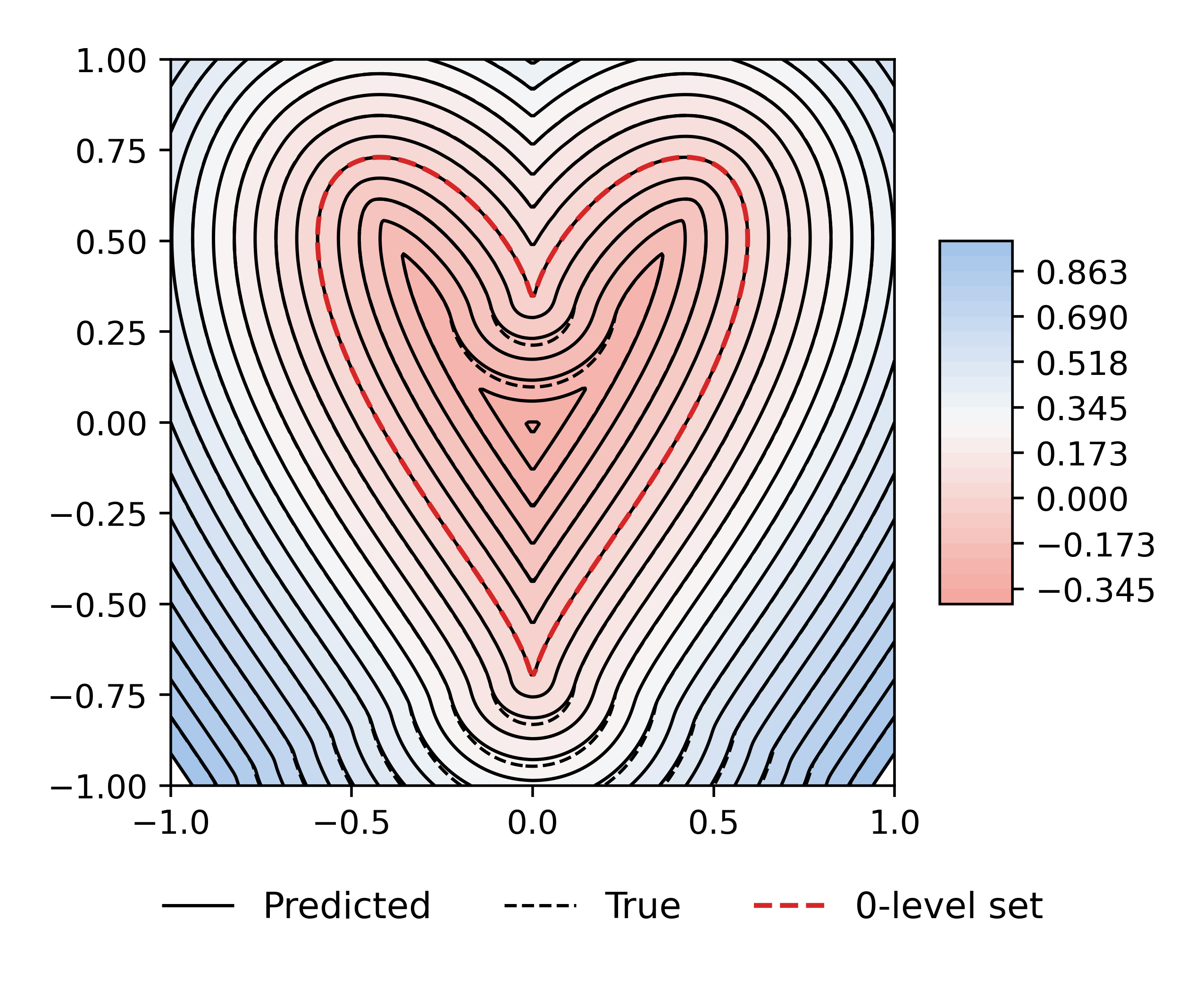} &
			\includegraphics[height=0.35\textwidth]{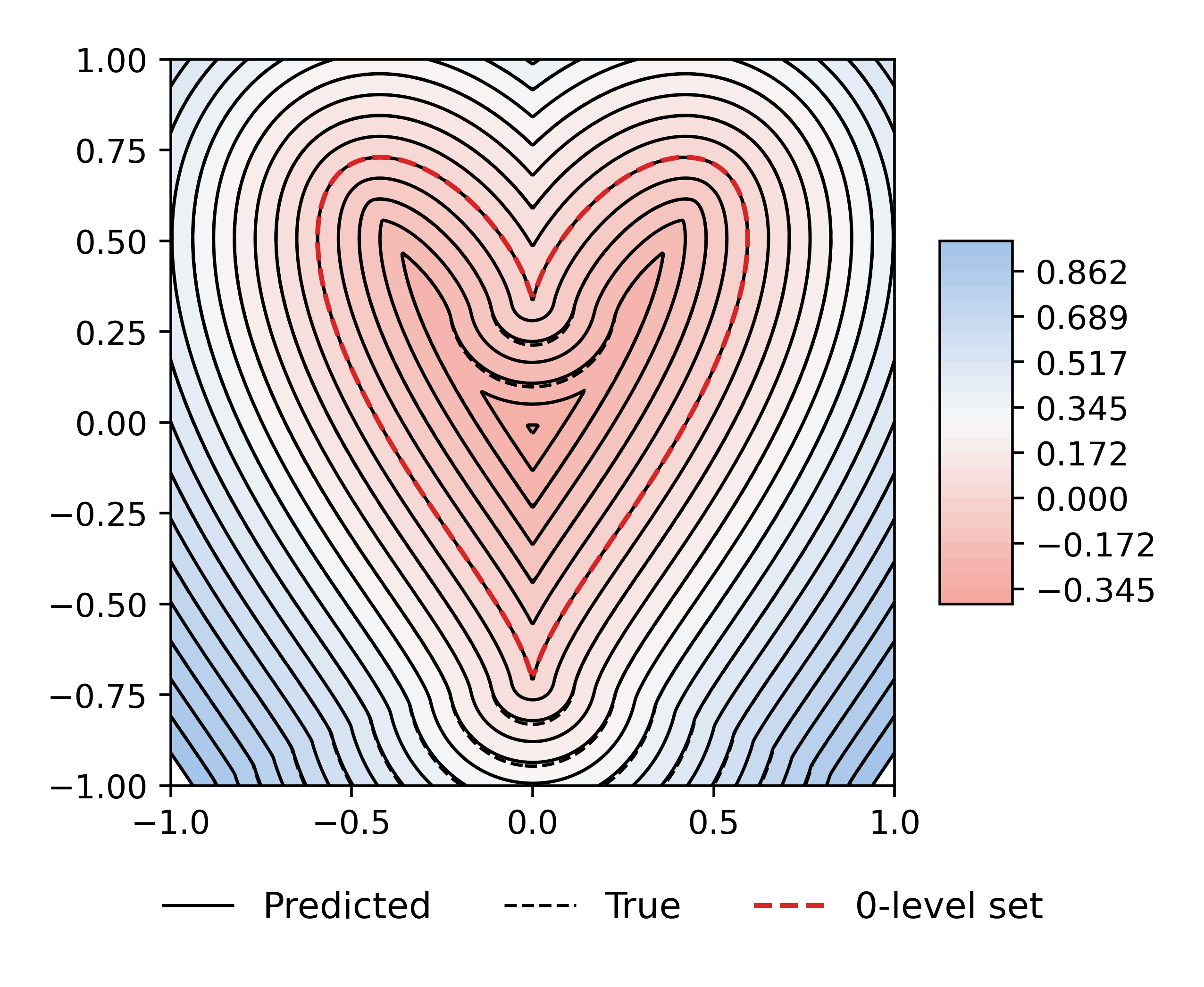} \\
			$\text{FMM}^2$ on $\Omega_8$ & $\text{FMM}^2$ on $\Omega_9$
		\end{tabular}
	\end{center}
	\caption{Signed distance functions of the zero level set of $\phi_7$ \ref{eq:heart} computed by the second-order FMM ($\text{FMM}^2$) on $\Omega_n$, $n=6,\ldots,9$ are presented.} \label{fig:heart_FMM2}
\end{figure}

Throughout the examples in Figure \ref{fig:comp_irr}, the results are compared with the existing PINN approach. The numerical results verify that the proposed method has better agreement with the exact SDF than PINN approach which is prone to returning erroneous predictions for chosen irregular interfaces. Furthermore, the result of \textit{ReSDF} with the level set function $\phi_7$ \ref{eq:heart} on $\Omega_6$ is qualitatively compared with the results of FMM and $\text{FMM}^2$ on $\Omega_n$, $n=6,\ldots,9$ in Figures \ref{fig:heart_FMM} and \ref{fig:heart_FMM2}, respectively. In the neural network of \textit{ReSDF}, we use the width $128$ and depth $4$. Since the batch size is small to $\Omega_6$, the learning rate is initialized by $5\cdot 10^{-4}$. FMM produces reliable results when the mesh is sufficiently refined. Comparing the results in Figure \ref{fig:heart_FMM}, the result of the first-order FMM on $\Omega_9$ is similar to the result of \textit{ReSDF} trained on the collocation points $\Omega_6$. Similarly, in Figure \ref{fig:heart_FMM2}, the result of \textit{ReSDF} on $\Omega_6$ is a comparable to the results of the second-order FMM on  between $\Omega_8$ and $\Omega_9$. 

\subsection*{Example 4}\label{subsec:mul_intf}

\begin{figure}
	\begin{center}
		\begin{tabular}{cc}
			\includegraphics[height=0.35\textwidth]{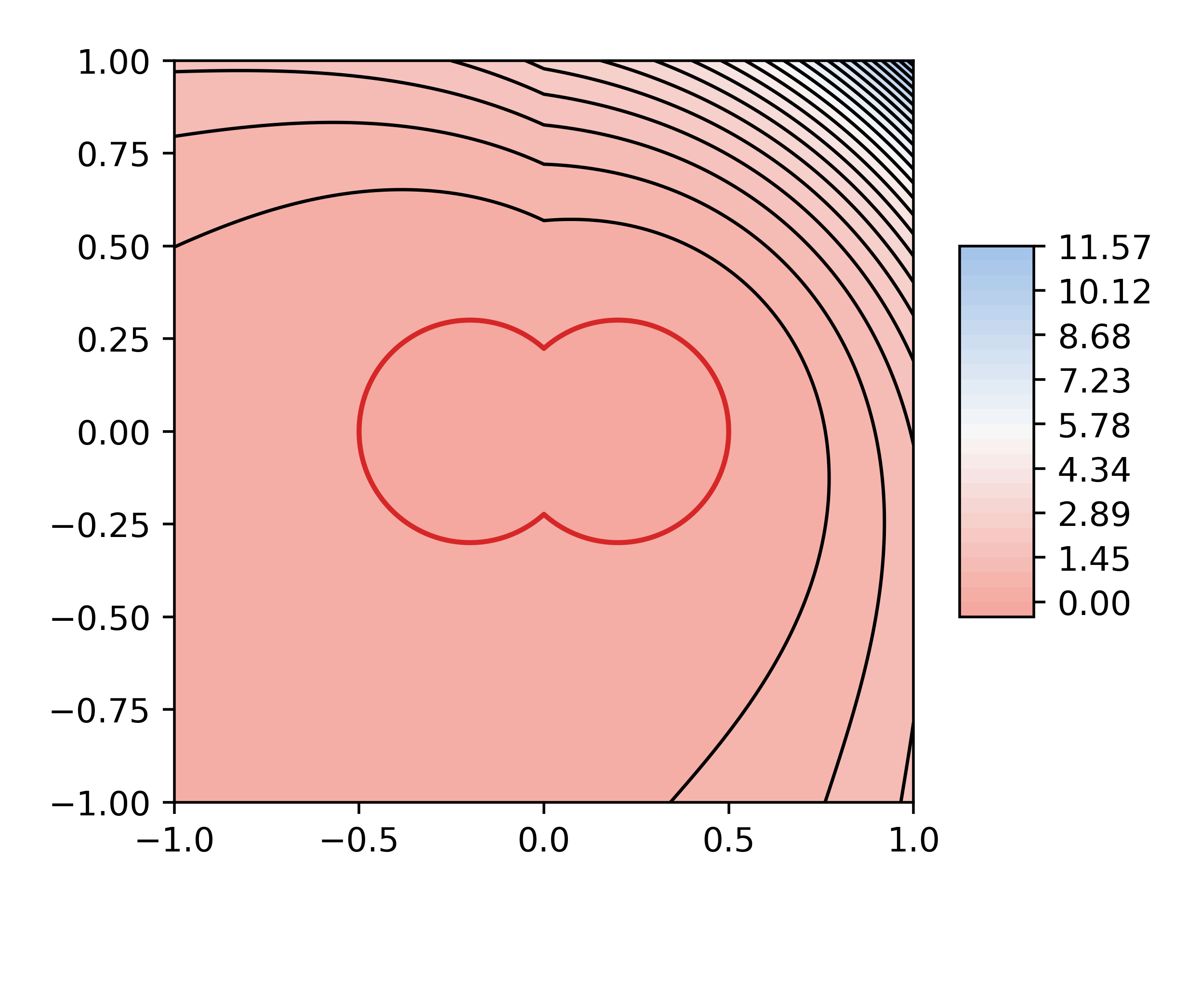} & 
			\includegraphics[height=0.35\textwidth]{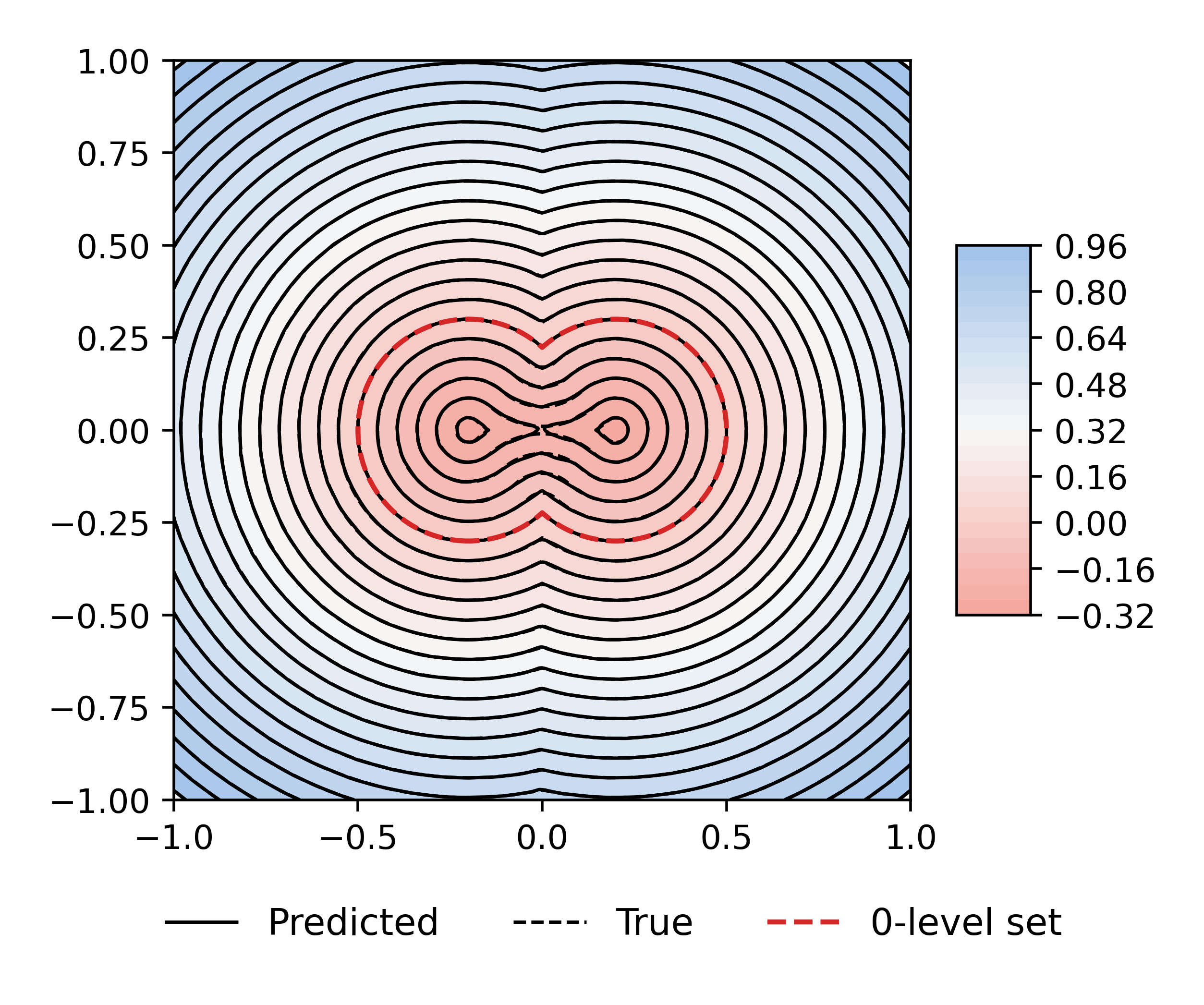} \\
			(a) $\phi_8$  & (b) \textit{ReSDF} with $\phi_8$ \\
			\includegraphics[height=0.35\textwidth]{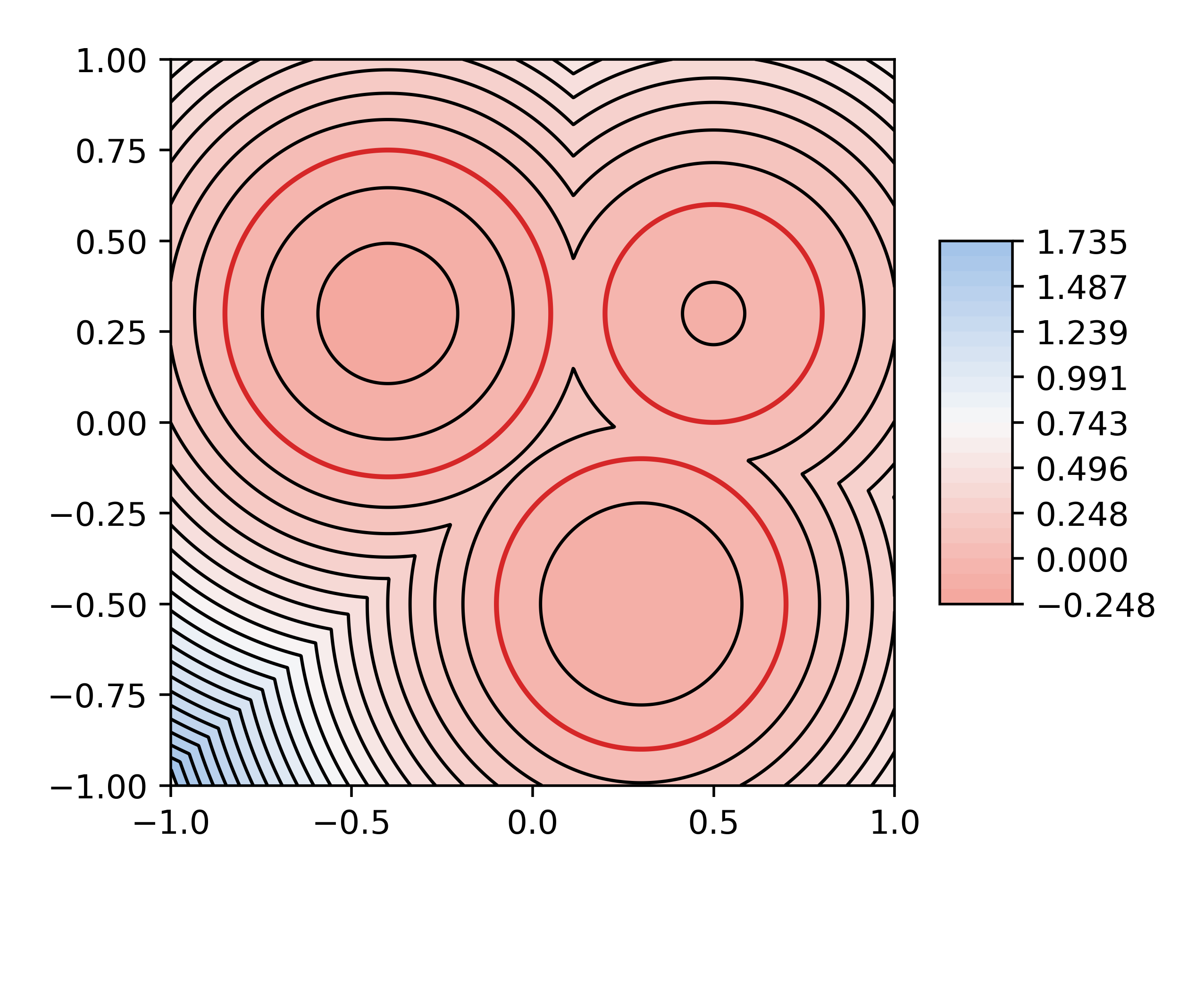} &
			\includegraphics[height=0.35\textwidth]{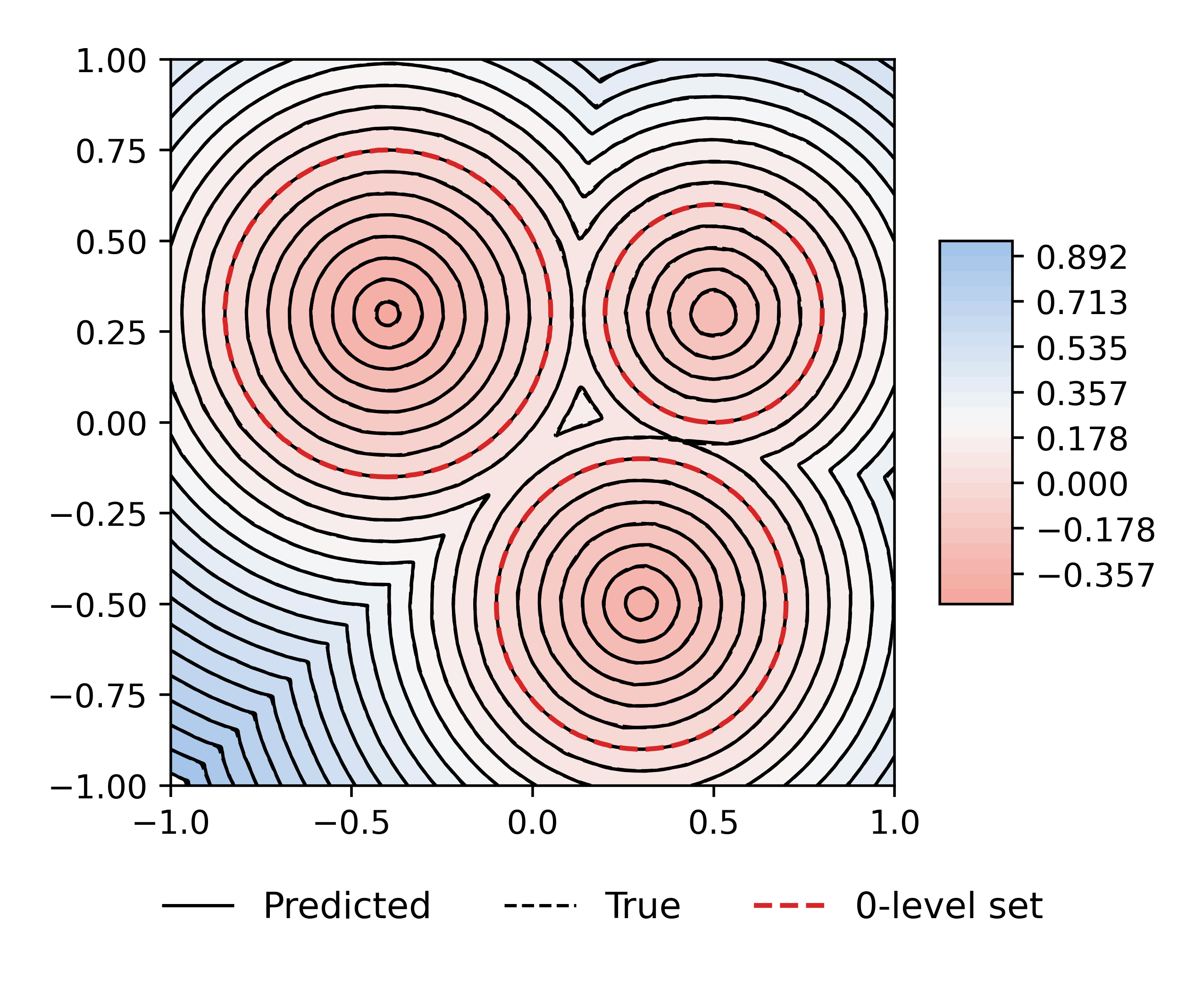} \\
			(c) $\phi_9$ & (d) \textit{ReSDF} with $\phi_9$
		\end{tabular}
	\end{center}
	\caption{The iso-contours of the level set functions $\phi_8$ \eqref{eq:two_cir} and $\phi_9$ \eqref{eq:three_cir} (left) and the results of \textit{ReSDF} with $\phi_8$ and $\phi_9$ (right) are presented with the exact solution. The red solid curves are the zero level set of $\phi_8$ or $\phi_9$.} \label{fig:multcircle}
\end{figure}

We investigate the ability of \textit{ReSDF} to approximate the SDF when the interface is a union of several interfaces. Such a complex mixed interface is relevant to practical problems such as the rising of multiple bubbles in multi-phase flows. The zero level sets of the level set functions $\phi_8$ \eqref{eq:two_cir} or $\phi_9$ \eqref{eq:three_cir} could depict cases of merged bubbles or multiple bubbles in the water, respectively. In Figure \ref{fig:multcircle}, predicted SDFs and the exact solutions are presented. Although the SDF has singularities over the computational domain, \textit{ReSDF} provides accurate interface representations for multiple nested interfaces.

\begin{table}
    \centering
    \setlength\tabcolsep{8pt}
    \caption{The $L^2$ and $L^\infty$ errors between exact and predicted SDF with $\phi_8$ \eqref{eq:two_cir} and $\phi_9$ \eqref{eq:three_cir} are listed. Uniform nodes are fixed by the collocation points $\Omega_7$ and random nodes are sampled from the uniform distribution.} \label{tab:rand}
    \vspace{2pt}
    \scalebox{0.95}{
    \begin{tabular}{ccccc}
    \toprule
    & \multicolumn{2}{c}{Uniform nodes} & \multicolumn{2}{c}{Random nodes} \\ 
    \cmidrule(lr){2-3} \cmidrule(lr){4-5}
    & $\left\Vert u_\theta - u\right\Vert_{2}$ & $\left\Vert u_\theta - u\right\Vert_{\infty}$  & $\left\Vert u_\theta - u\right\Vert_{2}$ & $\left\Vert u_\theta - u\right\Vert_{\infty}$ \\
    \midrule
    $\phi_8$ & $6.16\cdot10^{-4}$ &  $1.43\cdot10^{-2}$ & $6.99\cdot10^{-4}$ & $1.48\cdot 10^{-2}$ \\
    $\phi_9$ &  $6.46\cdot10^{-4}$ &  $1.89\cdot10^{-2}$ & $6.01\cdot10^{-4}$ & $1.98\cdot 10^{-2}$\\
    \bottomrule
    \end{tabular}}
\end{table} 

Another strength of the deep-learning-based approach in \textit{ReSDF} is that scattered collocation points can be used without any methodological modifications. We investigate the effect of the deployment of training collocation points with level set functions $\phi_8$ \eqref{eq:two_cir} or $\phi_9$ \eqref{eq:three_cir}. The uniform collocation points $\Omega_7$ and random points of the same number of $\Omega_7$ sampled from the uniform distribution $\cU\left(-1,1\right)$ are used. Running five times with different random seeds, the average values of the $L^2$ and $L^\infty$ errors are reported in Table \ref{tab:rand}. The results confirm that the errors obtained by different distributions are of the same order of magnitude. In other words, it verifies that the performance of the model is not sensitively tied to the uniform distribution of the training collocation points.

% \begin{figure}
% 	\centering{}
% 	\subfigure[Uniform Cartesian grid]{\includegraphics[width=0.32\textwidth]{} }
% 	\subfigure[Random]{\includegraphics[width=0.32\textwidth]{}}
%  \subfigure[Adaptive]{\includegraphics[width=0.32\textwidth]{}}
% 	\caption{\textbf{Overlapped two circles}: Heatmap plots for pointwise errors of the predicted SDF by \textit{ReSDF} with collocation points sampled from the uniform Cartesian grid (left), uniform random distribuiton (middle), and the adpative sampling (right).} \label{fig:twocircle_heatmap}
% \end{figure}

\subsection*{Example 5}\label{subsec:3D}

\begin{table}
    \centering
      \setlength\tabcolsep{10pt}
    \caption{The $L^2$ and $L^\infty$ errors between exact and predicted SDF of \textit{ReSDF} with $\phi_{10}$ \eqref{eq:3d_sphere} are listed. The network depth is fixed to $L=4$.} \label{tab:acc_u_3D}
    \vspace{2pt}
    \scalebox{0.85}{
    \begin{tabular}{ccccccc}
    \toprule
    \multirow{2}{*}{\diagbox[innerwidth=\textwidth*1/7]{$\mid\cD\mid$}{width} }
     & \multicolumn{2}{c}{32} & \multicolumn{2}{c}{64} & \multicolumn{2}{c}{128} \\ 
    \cmidrule(lr){2-3} \cmidrule(lr){4-5} \cmidrule(lr){6-7} 
      & $\left\Vert u_\theta - u\right\Vert_{2}$ & $\left\Vert u_\theta - u\right\Vert_{\infty}$  & $\left\Vert u_\theta - u\right\Vert_{2}$ & $\left\Vert u_\theta - u\right\Vert_{\infty}$  & $\left\Vert u_\theta - u\right\Vert_{2}$ & $\left\Vert u_\theta - u\right\Vert_{\infty}$ \\
    \midrule
    $\Omega^{\text{3d}}_6$  & $2.05\cdot10^{-3}$ & $1.35\cdot 10^{-1}$ & $4.40\cdot 10^{-4}$ & $3.37\cdot 10^{-2}$ & $2.47\cdot 10^{-4}$ & $1.51\cdot 10^{-2}$\\
    $\Omega^{\text{3d}}_7$   & $2.96\cdot10^{-3}$ & $1.60\cdot 10^{-1}$ & $2.46\cdot 10^{-4}$ & $1.56\cdot 10^{-2}$ & $1.58\cdot 10^{-4}$ & $1.40\cdot 10^{-2}$  \\
    \bottomrule
  \end{tabular}}
\end{table} 

\begin{table}
    \centering
      \setlength\tabcolsep{8pt}
    \caption{The $L^2$ and $L^\infty$ errors between exact gradient and the gradient of the predicted SDF of \textit{ReSDF} with $\phi_{11}$ \eqref{eq:3d_two_ellip} on the interface $\Gamma$ are listed. The network depth is fixed to $L=4$.} \label{tab:acc_V_3D}
    \vspace{2pt}
    \scalebox{0.85}{
    \begin{tabular}{ccccccc}
    \toprule
    \multirow{2}{*}{\diagbox[innerwidth=\textwidth*1/7]{$\mid\cD\mid$}{width} }
     & \multicolumn{2}{c}{32} & \multicolumn{2}{c}{64} & \multicolumn{2}{c}{128} \\ 
    \cmidrule(lr){2-3} \cmidrule(lr){4-5} \cmidrule(lr){6-7} 
     & $\left\Vert V_\theta -\bn\right\Vert_{2}^{\Gamma}$ & $\left\Vert V_\theta -\bn\right\Vert_{\infty}^{\Gamma}$ & $\left\Vert V_\theta -\bn\right\Vert_{2}^{\Gamma}$ & $\left\Vert V_\theta -\bn\right\Vert_{\infty}^{\Gamma}$ & $\left\Vert V_\theta -\bn\right\Vert_{2}^{\Gamma}$ & $\left\Vert V_\theta -\bn\right\Vert_{\infty}^{\Gamma}$ \\
    \midrule
    $\Omega^{\text{3d}}_6$  & $1.01\cdot10^{-2}$ & $6.98\cdot10^{-2}$ & $6.27\cdot 10^{-3}$ & $2.33\cdot 10^{-2}$ & $5.02\cdot 10^{-3}$ & $1.84\cdot 10^{-2}$ \\
    $\Omega^{\text{3d}}_7$ & $1.44\cdot10^{-2}$ & $6.42\cdot 10^{-2}$ &  $5.00\cdot 10^{-3}$ & $1.81\cdot 10^{-2}$ & $5.76\cdot 10^{-3}$ & $2.33\cdot 10^{-2}$  \\
    \bottomrule
  \end{tabular}}
\end{table} 

We demonstrate that the proposed method can be scaled into 3D. The number of training points is set as $\Omega^{\text{3d}}_6$ and $\Omega^{\text{3d}}_7$, and the number of neurons is $64$ in each hidden layer with depth $4$. The results of the sphere are listed in Tables \ref{tab:acc_u_3D} and \ref{tab:acc_V_3D}. We can observe that \textit{ReSDF} can still achieve a similar level of accuracy as before in the two-dimensional circular problem. It is important to point out that the number of parameters is similar to that of the two-dimensional case since only the input dimension of the input layer and the output dimension of the last output layer are increased by one. In contrast, the computational cost of classical numerical methods is prohibitively expensive when the dimensionality of the problem increases. Therefore, the numerical results validate the capability of the proposed method for three-dimensional problems without increasing computational complexity.

\begin{figure}
	\begin{center}
		\begin{tabular}{cc}
		\includegraphics[height=0.35\textwidth]{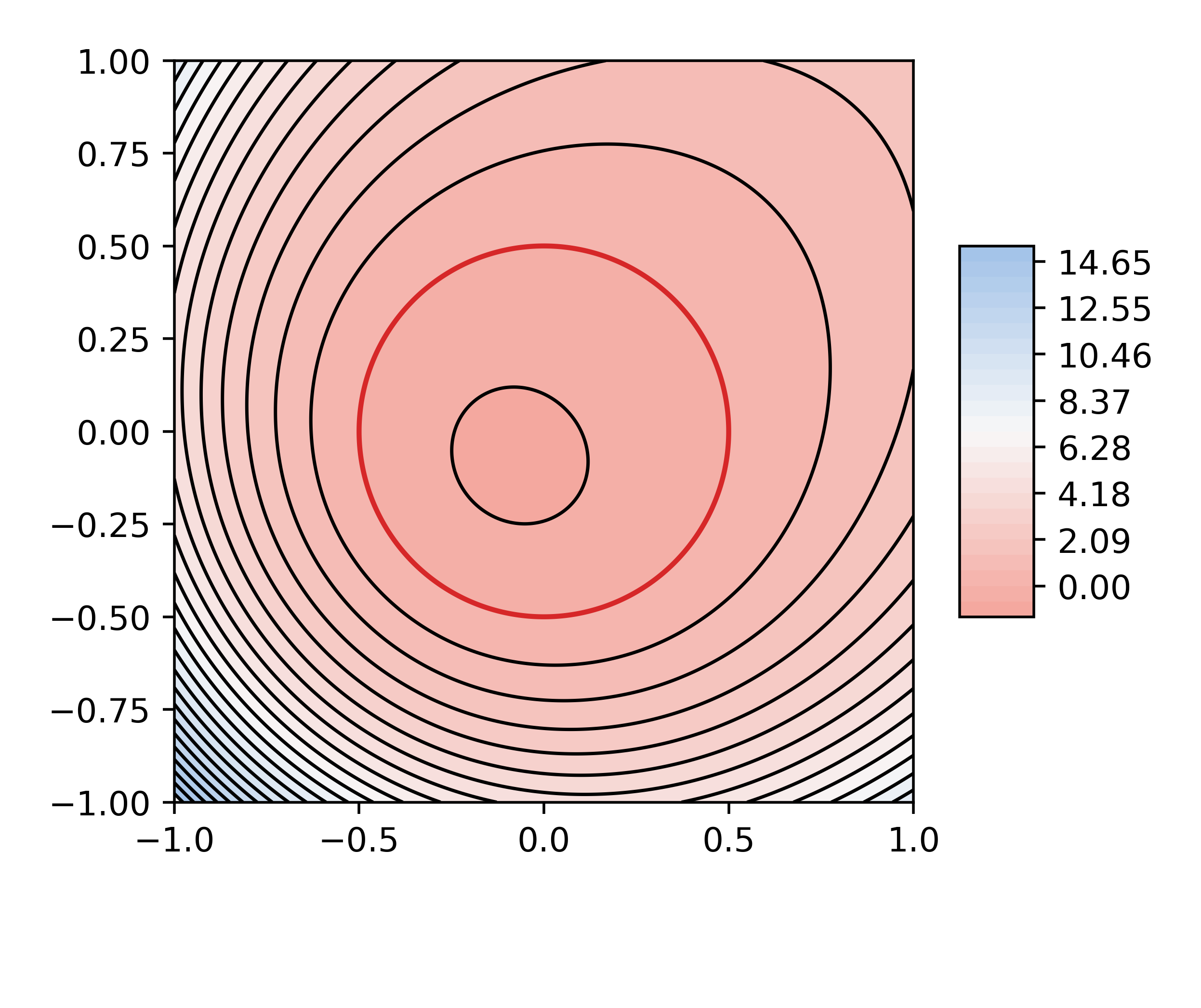} & 
			\includegraphics[height=0.35\textwidth]{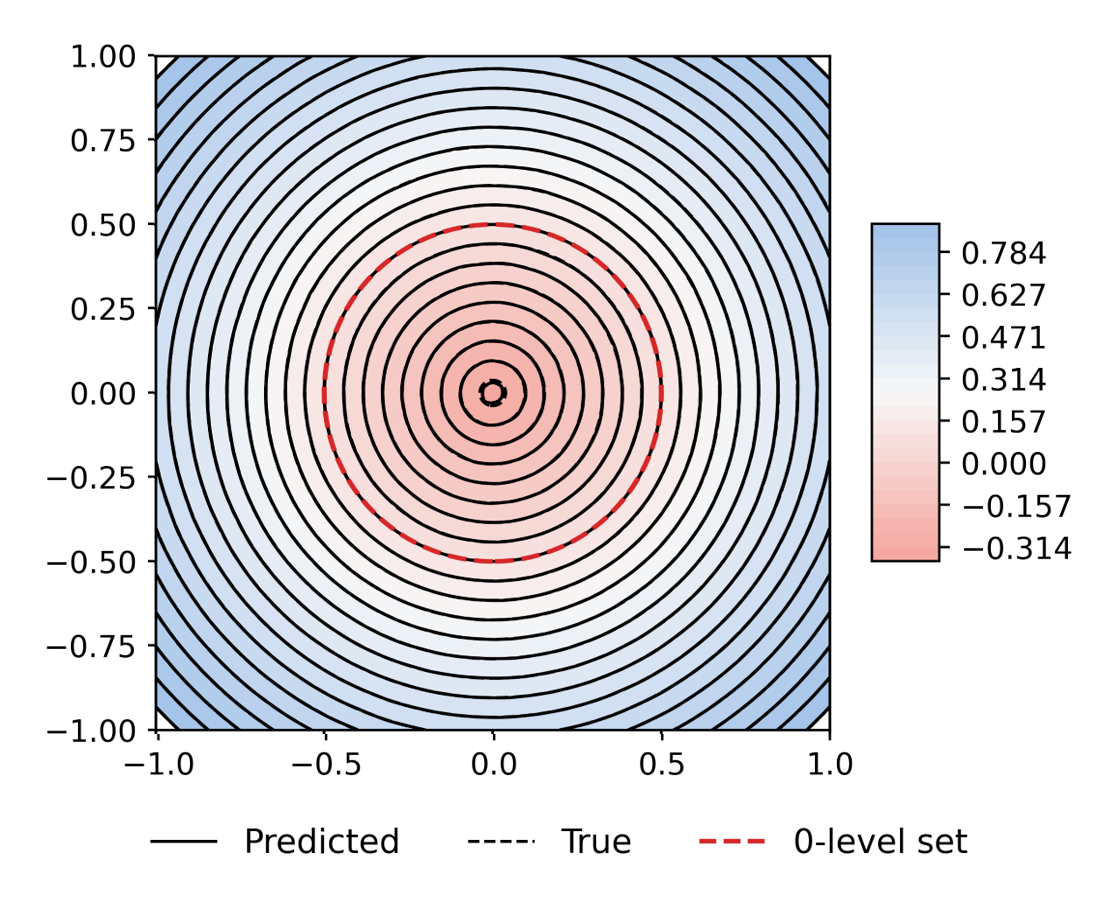} \\
			(a) $\phi_{10}$ on $z=0$ & (b) \textit{ReSDF} with $\phi_{10}$ \\
			\includegraphics[height=0.35\textwidth]{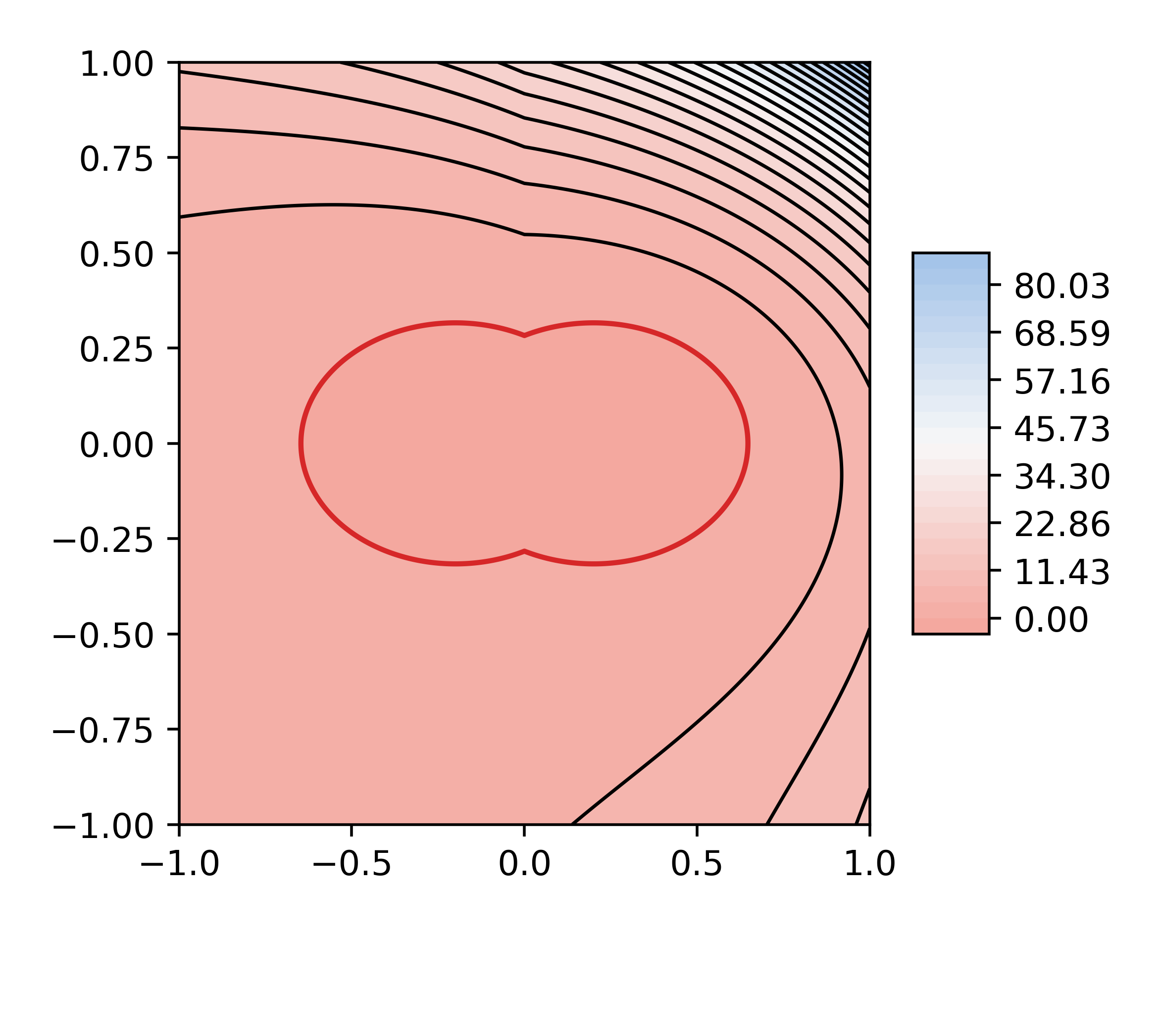} & 
			\includegraphics[height=0.35\textwidth]{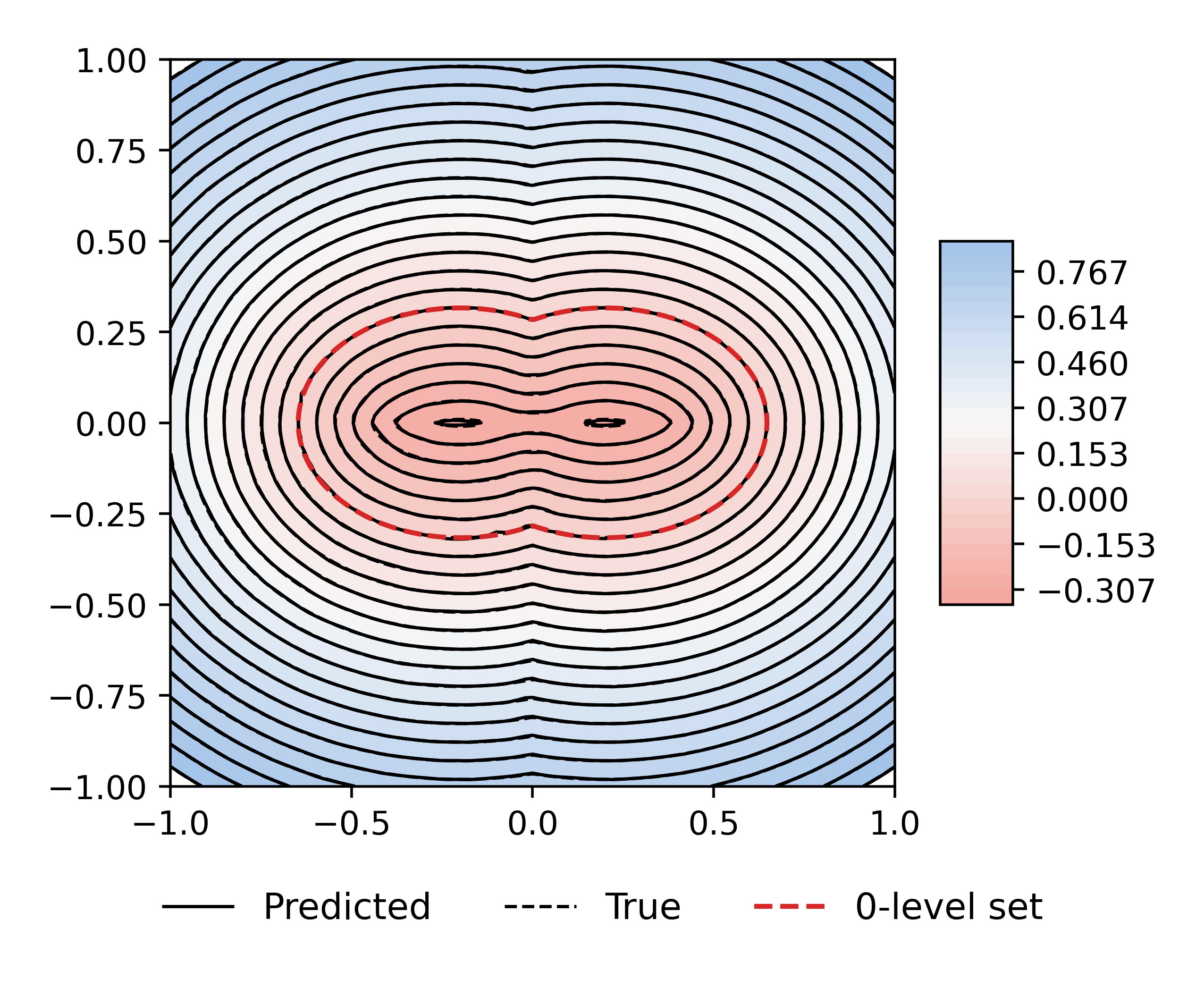} \\
			(c) $\phi_{11}$ on $z=0$  & (d) \textit{ReSDF} with $\phi_{11}$
		\end{tabular}
	\end{center}
	\caption{The iso-contours of the level set functions $\phi_{10}$ \eqref{eq:two_cir} and $\phi_{11}$ \eqref{eq:three_cir} (left) and the results of \textit{ReSDF} with $\phi_{10}$ and $\phi_{11}$ (right) are presented on the $z=0$ plans with the exact solution. The red solid curves are the zero level set of $\phi_{10}$ and $\phi_{11}$.} \label{fig:3d}
\end{figure}

\begin{figure}
	\begin{center}
		\begin{tabular}{cc}
			\includegraphics[height=0.35\textwidth]{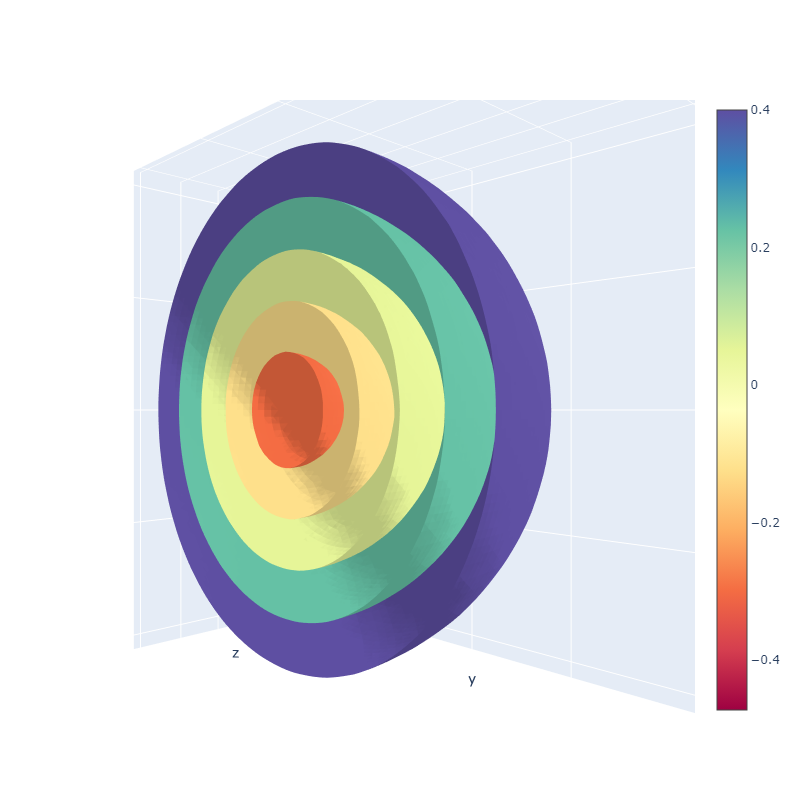} & 
			\includegraphics[height=0.35\textwidth]{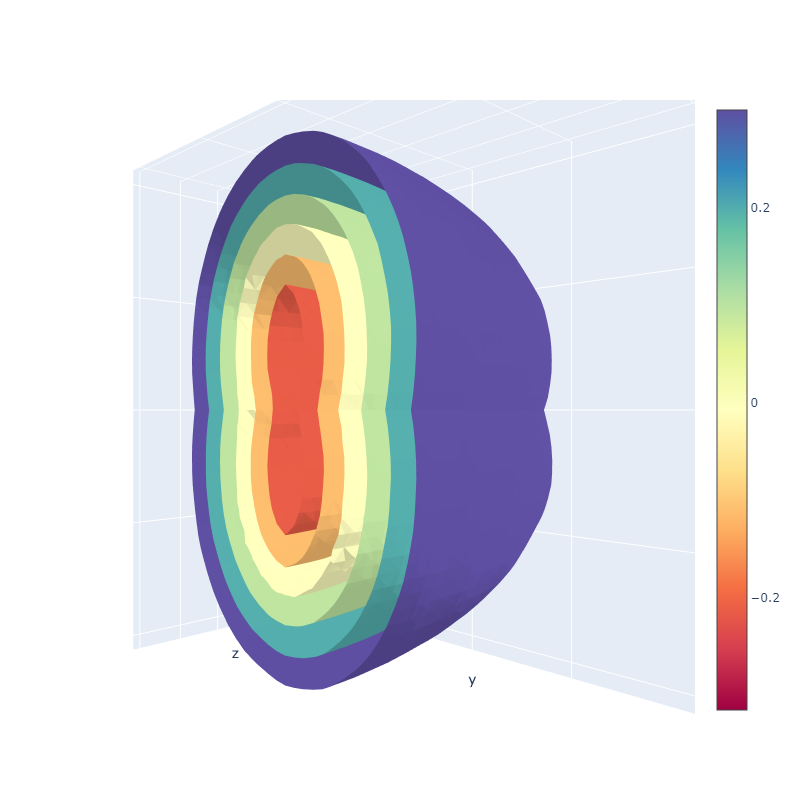} \\
			(a) \textit{ReSDF} with $\phi_{10}$ & (b) \textit{ReSDF} with $\phi_{11}$
		\end{tabular}
	\end{center}
	\caption{Iso-surfaces cut into a section $z=0$ of the predicted SDF with $\phi_{10}$ \eqref{eq:two_cir} (left) and $\phi_{11}$ \eqref{eq:three_cir} (right) are presented.} \label{fig:multi3d_isosurfaces} \label{fig:iso3d}
\end{figure}

In Figures \ref{fig:3d}-(a) and (c), iso-contours of level set functions $\phi_{10}$ \eqref{eq:two_cir} and $\phi_{11}$ \eqref{eq:three_cir} (left) on the $z=0$ plans are presented. 
The results confirm that \textit{ReSDF} produces reliable results also on three-dimensional space.

%%%%%%%%%%%%%%%%%%%%%%%%%%%%%%%%%%%%%%%%%%%%%%%%%%%%%%%%%%%%%%

\section{Conclusions}{\label{sec:conclusion}}

A novel neural approach is proposed to recover the signed distance function of a given hypersurface implicitly represented by the zero contour of a level set function. To enhance the expressive power of the neural network, an auxiliary output is employed to learn the gradient of the SDF in addition to the network output as a neural surrogate of the SDF. Two outputs of the gradient-augmented network are designed to satisfy certain properties as hard constraints. Moreover, underpinned by geometrical properties, we devise a training objective that imposes global properties between the interface and the whole computational domain and alleviates the singularity of the SDF. We confirm that the proposed method produces accurate and robust results without tunable parameter adjustments through various experimental examples ranging from highly distorted or discontinuous level set functions to complex and irregular interfaces. 

\section{Acknowledgements}
This work was supported by the NRF grant [2021R1A2C3010887], the ICT R\&D program of MSIT/IITP[1711117093, 2021-0-00077], and the European Union´s Horizon 2020 Research and Innovation Programme under the Programme SASPRO 2 COFUND Marie Sklodowska-Curie grant agreement No. 945478.
%%%%%%%%%%%%%%%%%%%%%%%%%%%%%%%%%%%%%%%%%%%%%%%%%%%%%%%%%%%%%%

\appendix
\section{Theoretical Justification} \label{appen:analysis}
The following theorem proves that the SDF is the minimizer of $\cL(u,V)$ \eqref{eq:total} except on a set of measure zero.
\begin{theorem}
Let $\Gamma$ be a hypersurface in a domain $\Omega\subset \bR^n$. The optimal solution to the functional \eqref{eq:total} is the SDF to $\Gamma$, except on a set of measure zero.
\end{theorem}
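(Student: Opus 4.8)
The plan is to use that $\cL(u,V)$ is a sum of three integrals of squared quantities, so $\cL(u,V)\ge 0$ for every admissible pair, and a pair is a global minimizer exactly when all three integrands vanish almost everywhere, once the infimum is shown to be $0$. The argument then splits into two parts: first, that the SDF attains $\cL=0$; and second (the \emph{converse}), that any pair with $\cL=0$ coincides with the SDF off a null set.

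For the first part I would set $u$ equal to the SDF and $V=\nabla u$, which exists almost everywhere by Proposition \ref{prop:SDF}. The first integrand vanishes identically because $V=\nabla u$. For the second, Proposition \ref{prop:SDF} gives $\bx-u(\bx)\nabla u(\bx)=\bx_\Gamma\in\Gamma$ wherever $u$ is differentiable, so $\phi(\bx-uV)=0$ there; the non-differentiability set being null, the integral is $0$. For the third, I observe that $\bx-\eta u\nabla u$ equals the point $s(\eta|u|)$ on the ray of Proposition \ref{prop:HJB}, and since $\eta<1$ we have $\eta|u|\in[0,|u|)$, so that proposition yields $\nabla u(\bx-\eta u\nabla u)=\nabla u(\bx)$ and the integrand vanishes. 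Hence $\cL(u,\nabla u)=0$ is the global minimum.

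For the converse I would extract from $\cL(u,V)=0$ the three almost-everywhere identities $V=\nabla u$, $\bx-uV\in\Gamma$, and $V(\bx)=V(\bx-\eta u V)$, and combine them with the hard constraints built into the ansatz, namely $\|V\|=1$ together with $\sgn(u)=\sgn(\phi)$ and $u|_\Gamma=0$. The second identity places $\bx_\Gamma:=\bx-uV$ on $\Gamma$ at Euclidean distance $\|uV\|=|u(\bx)|$, giving $d(\bx,\Gamma)\le|u(\bx)|$. For the reverse bound, $\|\nabla u\|=\|V\|=1$ makes $u$ one-Lipschitz, and since $u$ vanishes on $\Gamma$ we get $|u(\bx)|=|u(\bx)-u(\by)|\le\|\bx-\by\|$ for every $\by\in\Gamma$, hence $|u(\bx)|\le d(\bx,\Gamma)$. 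Together these force $|u|=d(\cdot,\Gamma)$ almost everywhere, and the sign condition fixes the sign, so $u$ is the SDF off a null set.

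I expect the \emph{converse} to be the main obstacle, chiefly through the status of the unit-norm property. Under the truly unconstrained functional $\|\nabla u\|=1$ is not immediate and must be forced by the third term, which only asserts that $\nabla u$ is constant across a single short segment; propagating this to the entire characteristic toward $\bx_\Gamma$ (by a bootstrapping or continuity argument along the ray) is what recovers unit-speed growth of $u$, equivalently the eikonal equation, after which uniqueness of the SDF as its viscosity solution \cite{crandall1984two} finishes the proof. The remaining care is regularity: justifying the gradient identities in a Sobolev or Lipschitz sense, validating the step from a unit gradient bound to the Lipschitz estimate, and checking that the null singular set (the medial axis of $\Gamma$) can be discarded throughout without altering any of the three integrals.
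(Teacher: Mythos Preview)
Your proof is correct and follows essentially the same two-sided inequality as the paper: the shortest-path condition $\bx-uV\in\Gamma$ gives $d(\bx,\Gamma)\le|u(\bx)|$, while the unit-gradient/Lipschitz bound together with $u|_\Gamma=0$ gives $|u(\bx)|\le d(\bx,\Gamma)$. Your worry about needing $\cL_{\text{RS}}$ to force $\|\nabla u\|=1$ is unnecessary, since $\|V\|=1$ is a hard architectural constraint and $\nabla u=V$ a.e.\ already follows from $\cL_{\text{GM}}=0$; the paper in fact remarks after its proof that the third term is not used in the argument.
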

\begin{proof}
Let $u^{\ast}$ be the optimal solution to the functional \eqref{eq:total}.
For any point $\bx\in\Omega$ and a unit vector $\bv\in\bR^n$,
define a function $f_\bV:\bR^{+}\rightarrow\bR$ by 
 \[
 f_{\bv}\left(t\right)\coloneqq u^{\ast}\left(\bx-t\bv\right).
 \]
Clearly, it follows that $f_{\bv}\left(0\right)=u^{\ast}\left(\bx\right)$ and $\left\Vert f'_{\bv}\left(\bx\right)\right\Vert \leq 0$, because
 \[
 \left\Vert f'_{\bv}\left(t\right)\right\Vert=\left\Vert \bv \cdot \nabla u^{\ast}\left(\bx-t\bv\right)\right\Vert\leq \left\Vert \bv \right\Vert\cdot\left\Vert \nabla u^{\ast}\left(\bx-t\bv\right)\right\Vert\leq 1,\ \forall t.
 \]
 Suppose $f_\bv\left(T\right)=0$ for some $\bv\in S^{n-1}$ and $T\in\bR^{+}$. Then, we have
 \[
 \left\vert \frac{f_\bv\left(T\right) - f_\bv\left(0\right)}{T-0}\right\vert \leq 1,
 \]
which can be reformulated as
\begin{equation}\label{eq:lem}
\left\vert f_\bv\left(0\right)\right\vert = \left\vert u^{\ast}\left(\bx\right) \right\vert \leq \mid T \mid.
\end{equation}
Recall that for the distance function $d\left(\cdot,\Gamma\right)$, there exists a unit vector $\bv \in S^{n-1}$ such that 
\[
\bx- d\left(\bx,\Gamma\right)\cdot \bv \in \Gamma,
\]
which is identical to
\[
f_\bv\left(d\left(\bx,\Gamma\right)\right)=0.
\]
The inequality we deduced in \eqref{eq:lem} leads to 
\[
\left\vert u^{\ast}\left(\bx\right)\right\vert \leq d\left(x,\Gamma\right).
\]
However, because $u$ is the optimal solution, it satisfies $\bx-u^{\ast}\left(\bx\right)\nabla u^{\ast}\left(\bx\right)\in\Gamma$ for points $\bx\in\Omega$ at which $u^\ast$ is differentiable. Therefore, it can be written as $f_{\nabla u^{\ast}\left(\bx\right)}\left(u^{\ast}\left(\bx\right)\right)=0$, and hence
\[
d\left(\bx,\Gamma\right)\leq \left\vert u^{\ast}\left(\bx\right)\right\vert.
\]
Combining both inequalities, we see that 
\[
\left\vert u^{\ast}\left(\bx\right) \right\vert = d\left(\bx,\Gamma\right),
\]
concluding the proof.
\end{proof}

Note that according to the proof we can obtain the SDF without the last term in \eqref{eq:total} where the SDF is differentiable. In practice, we employ $\cL_{RS}$ to facilitate training at singular points discussed in Section \ref{sec:objective}.
 
\section{Proof of Proposition \ref{prop:HJB}}\label{appen:prop2}
\begin{proof}
For a given point $\bx\in\Omega$, let us define a function $g:\bR^+\rightarrow\bR$ as
\[
g_\bx\left(t\right)=u\left(\bx-t\nabla u\left(\bx\right)\right).
\]
It satisfies 
\begin{equation} \label{eq:tmp1}
    \begin{cases}
        g_\bx\left(0\right)=u\left(\bx\right), \\
        g_\bx\left(u\left(\bx\right)\right) =u\left(\bx-u\left(\bx\right)\nabla u\left(\bx\right)\right)=0,
    \end{cases}
\end{equation}
where the last equation is deduced from the property \eqref{eq:condition1}.
Also, since the signed distance function has a gradient with unit norm, we get
\begin{equation} \label{eq:tmp2}
\left\vert g_\bx'\left(t\right)\right\vert = \left\vert \nabla u\left(\bx\right)\cdot \nabla u\left(\bx-t\nabla u\left(\bx\right)\right) \right\vert \leq 1.
\end{equation}
From \eqref{eq:tmp1} and \eqref{eq:tmp2}, it follows that 
\[
\left\vert g_\bx '\left(t\right)\right\vert=1,
\]
for $t$ between $0$ to $u\left(\bx\right)$.
This implies that $\nabla u\left(\bx\right)$ and $\nabla u\left(\bx- t\nabla\left(\bx\right)\right)$ are parallel.
This concludes 
    \begin{equation}\label{eq:condition2}
    \nabla u\left(\bx\right)=\nabla u\left(\bx - t\sgn\left(u\left(\bx\right)\right)\nabla u\left(\bx\right)\right), \forall\ t\in\left[0,\left\vert u\left(\bx\right)\right\vert\right).
\end{equation}
\end{proof}

\bibliographystyle{elsarticle-num-names}
\bibliography{mybib}

\begin{thebibliography}{87}
\expandafter\ifx\csname natexlab\endcsname\relax\def\natexlab#1{#1}\fi
\providecommand{\url}[1]{\texttt{#1}}
\providecommand{\href}[2]{#2}
\providecommand{\path}[1]{#1}
\providecommand{\DOIprefix}{doi:}
\providecommand{\ArXivprefix}{arXiv:}
\providecommand{\URLprefix}{URL: }
\providecommand{\Pubmedprefix}{pmid:}
\providecommand{\doi}[1]{\href{http://dx.doi.org/#1}{\path{#1}}}
\providecommand{\Pubmed}[1]{\href{pmid:#1}{\path{#1}}}
\providecommand{\bibinfo}[2]{#2}
\ifx\xfnm\relax \def\xfnm[#1]{\unskip,\space#1}\fi
%Type = Article
\bibitem[{Olsson and Kreiss(2005)}]{olsson2005conservative}
\bibinfo{author}{E.~Olsson}, \bibinfo{author}{G.~Kreiss},
\newblock \bibinfo{title}{A conservative level set method for two phase flow},
\newblock \bibinfo{journal}{Journal of computational physics}
  \bibinfo{volume}{210} (\bibinfo{year}{2005}) \bibinfo{pages}{225--246}.
%Type = Article
\bibitem[{Gibou et~al.(2007)Gibou, Chen, Nguyen, and Banerjee}]{gibou2007level}
\bibinfo{author}{F.~Gibou}, \bibinfo{author}{L.~Chen},
  \bibinfo{author}{D.~Nguyen}, \bibinfo{author}{S.~Banerjee},
\newblock \bibinfo{title}{A level set based sharp interface method for the
  multiphase incompressible navier--stokes equations with phase change},
\newblock \bibinfo{journal}{Journal of Computational Physics}
  \bibinfo{volume}{222} (\bibinfo{year}{2007}) \bibinfo{pages}{536--555}.
%Type = Article
\bibitem[{Roget and Sitaraman(2013)}]{roget2013wall}
\bibinfo{author}{B.~Roget}, \bibinfo{author}{J.~Sitaraman},
\newblock \bibinfo{title}{Wall distance search algorithm using voxelized
  marching spheres},
\newblock \bibinfo{journal}{Journal of Computational Physics}
  \bibinfo{volume}{241} (\bibinfo{year}{2013}) \bibinfo{pages}{76--94}.
%Type = Inproceedings
\bibitem[{Tsai et~al.(2001)Tsai, Yezzi, Wells, Tempany, Tucker, Fan, Grimson,
  and Willsky}]{tsai2001model}
\bibinfo{author}{A.~Tsai}, \bibinfo{author}{A.~Yezzi},
  \bibinfo{author}{W.~Wells}, \bibinfo{author}{C.~Tempany},
  \bibinfo{author}{D.~Tucker}, \bibinfo{author}{A.~Fan}, \bibinfo{author}{W.~E.
  Grimson}, \bibinfo{author}{A.~Willsky},
\newblock \bibinfo{title}{Model-based curve evolution technique for image
  segmentation},
\newblock in: \bibinfo{booktitle}{Proceedings of the 2001 IEEE Computer Society
  Conference on Computer Vision and Pattern Recognition. CVPR 2001},
  volume~\bibinfo{volume}{1}, \bibinfo{organization}{IEEE},
  \bibinfo{year}{2001}, pp. \bibinfo{pages}{I--I}.
%Type = Article
\bibitem[{Alvino et~al.(2007)Alvino, Unal, Slabaugh, Peny, and
  Fang}]{alvino2007efficient}
\bibinfo{author}{C.~Alvino}, \bibinfo{author}{G.~Unal},
  \bibinfo{author}{G.~Slabaugh}, \bibinfo{author}{B.~Peny},
  \bibinfo{author}{T.~Fang},
\newblock \bibinfo{title}{Efficient segmentation based on eikonal and diffusion
  equations},
\newblock \bibinfo{journal}{International Journal of Computer Mathematics}
  \bibinfo{volume}{84} (\bibinfo{year}{2007}) \bibinfo{pages}{1309--1324}.
%Type = Article
\bibitem[{Li et~al.(2010)Li, Xu, Gui, and Fox}]{li2010distance}
\bibinfo{author}{C.~Li}, \bibinfo{author}{C.~Xu}, \bibinfo{author}{C.~Gui},
  \bibinfo{author}{M.~D. Fox},
\newblock \bibinfo{title}{Distance regularized level set evolution and its
  application to image segmentation},
\newblock \bibinfo{journal}{IEEE transactions on image processing}
  \bibinfo{volume}{19} (\bibinfo{year}{2010}) \bibinfo{pages}{3243--3254}.
%Type = Inproceedings
\bibitem[{Taubin(2012)}]{taubin2012smooth}
\bibinfo{author}{G.~Taubin},
\newblock \bibinfo{title}{Smooth signed distance surface reconstruction and
  applications},
\newblock in: \bibinfo{booktitle}{Progress in Pattern Recognition, Image
  Analysis, Computer Vision, and Applications: 17th Iberoamerican Congress,
  CIARP 2012, Buenos Aires, Argentina, September 3-6, 2012. Proceedings 17},
  \bibinfo{organization}{Springer}, \bibinfo{year}{2012}, pp.
  \bibinfo{pages}{38--45}.
%Type = Article
\bibitem[{Gropp et~al.(2020)Gropp, Yariv, Haim, Atzmon, and
  Lipman}]{gropp2020implicit}
\bibinfo{author}{A.~Gropp}, \bibinfo{author}{L.~Yariv},
  \bibinfo{author}{N.~Haim}, \bibinfo{author}{M.~Atzmon},
  \bibinfo{author}{Y.~Lipman},
\newblock \bibinfo{title}{Implicit geometric regularization for learning
  shapes},
\newblock \bibinfo{journal}{International Conference on Machine Learning}
  (\bibinfo{year}{2020}).
%Type = Article
\bibitem[{Sitzmann et~al.(2020)Sitzmann, Chan, Tucker, Snavely, and
  Wetzstein}]{sitzmann2020metasdf}
\bibinfo{author}{V.~Sitzmann}, \bibinfo{author}{E.~Chan},
  \bibinfo{author}{R.~Tucker}, \bibinfo{author}{N.~Snavely},
  \bibinfo{author}{G.~Wetzstein},
\newblock \bibinfo{title}{Metasdf: Meta-learning signed distance functions},
\newblock \bibinfo{journal}{Advances in Neural Information Processing Systems}
  \bibinfo{volume}{33} (\bibinfo{year}{2020}) \bibinfo{pages}{10136--10147}.
%Type = Article
\bibitem[{Pottmann et~al.(2010)Pottmann, Huang, Deng, Schiftner, Kilian,
  Guibas, and Wallner}]{pottmann2010geodesic}
\bibinfo{author}{H.~Pottmann}, \bibinfo{author}{Q.~Huang},
  \bibinfo{author}{B.~Deng}, \bibinfo{author}{A.~Schiftner},
  \bibinfo{author}{M.~Kilian}, \bibinfo{author}{L.~Guibas},
  \bibinfo{author}{J.~Wallner},
\newblock \bibinfo{title}{Geodesic patterns},
\newblock \bibinfo{journal}{ACM Transactions on Graphics (TOG)}
  \bibinfo{volume}{29} (\bibinfo{year}{2010}) \bibinfo{pages}{1--10}.
%Type = Article
\bibitem[{Novello et~al.(2022)Novello, Schardong, Schirmer, da~Silva, Lopes,
  and Velho}]{novello2022exploring}
\bibinfo{author}{T.~Novello}, \bibinfo{author}{G.~Schardong},
  \bibinfo{author}{L.~Schirmer}, \bibinfo{author}{V.~da~Silva},
  \bibinfo{author}{H.~Lopes}, \bibinfo{author}{L.~Velho},
\newblock \bibinfo{title}{Exploring differential geometry in neural implicits},
\newblock \bibinfo{journal}{Computers \& Graphics} \bibinfo{volume}{108}
  (\bibinfo{year}{2022}) \bibinfo{pages}{49--60}.
%Type = Article
\bibitem[{Kimmel et~al.(1998)Kimmel, Kiryati, and
  Bruckstein}]{kimmel1998multivalued}
\bibinfo{author}{R.~Kimmel}, \bibinfo{author}{N.~Kiryati},
  \bibinfo{author}{A.~M. Bruckstein},
\newblock \bibinfo{title}{Multivalued distance maps for motion planning on
  surfaces with moving obstacles},
\newblock \bibinfo{journal}{IEEE Transactions on Robotics and Automation}
  \bibinfo{volume}{14} (\bibinfo{year}{1998}) \bibinfo{pages}{427--436}.
%Type = Article
\bibitem[{Lee et~al.(2016)Lee, Fekete, and McLurkin}]{lee2016structured}
\bibinfo{author}{S.~K. Lee}, \bibinfo{author}{S.~P. Fekete},
  \bibinfo{author}{J.~McLurkin},
\newblock \bibinfo{title}{Structured triangulation in multi-robot systems:
  Coverage, patrolling, voronoi partitions, and geodesic centers},
\newblock \bibinfo{journal}{The International Journal of Robotics Research}
  \bibinfo{volume}{35} (\bibinfo{year}{2016}) \bibinfo{pages}{1234--1260}.
%Type = Article
\bibitem[{Osher and Sethian(1988)}]{osher1988fronts}
\bibinfo{author}{S.~Osher}, \bibinfo{author}{J.~A. Sethian},
\newblock \bibinfo{title}{Fronts propagating with curvature-dependent speed:
  Algorithms based on hamilton-jacobi formulations},
\newblock \bibinfo{journal}{Journal of computational physics}
  \bibinfo{volume}{79} (\bibinfo{year}{1988}) \bibinfo{pages}{12--49}.
%Type = Article
\bibitem[{Sussman and Fatemi(1999)}]{sussman1999efficient}
\bibinfo{author}{M.~Sussman}, \bibinfo{author}{E.~Fatemi},
\newblock \bibinfo{title}{An efficient, interface-preserving level set
  redistancing algorithm and its application to interfacial incompressible
  fluid flow},
\newblock \bibinfo{journal}{SIAM Journal on scientific computing}
  \bibinfo{volume}{20} (\bibinfo{year}{1999}) \bibinfo{pages}{1165--1191}.
%Type = Article
\bibitem[{Strain(1999)}]{strain1999semi}
\bibinfo{author}{J.~Strain},
\newblock \bibinfo{title}{Semi-lagrangian methods for level set equations},
\newblock \bibinfo{journal}{Journal of Computational Physics}
  \bibinfo{volume}{151} (\bibinfo{year}{1999}) \bibinfo{pages}{498--533}.
%Type = Article
\bibitem[{Kang et~al.(2000)Kang, Fedkiw, and Liu}]{kang2000boundary}
\bibinfo{author}{M.~Kang}, \bibinfo{author}{R.~P. Fedkiw},
  \bibinfo{author}{X.-D. Liu},
\newblock \bibinfo{title}{A boundary condition capturing method for multiphase
  incompressible flow},
\newblock \bibinfo{journal}{Journal of Scientific Computing}
  \bibinfo{volume}{15} (\bibinfo{year}{2000}) \bibinfo{pages}{323--360}.
%Type = Article
\bibitem[{Cho et~al.(2022)Cho, Park, and Kang}]{cho2022solving}
\bibinfo{author}{H.~Cho}, \bibinfo{author}{Y.~Park}, \bibinfo{author}{M.~Kang},
\newblock \bibinfo{title}{Solving incompressible navier--stokes equations on
  irregular domains and quadtrees by monolithic approach},
\newblock \bibinfo{journal}{Journal of Computational Physics}
  \bibinfo{volume}{463} (\bibinfo{year}{2022}) \bibinfo{pages}{111304}.
%Type = Article
\bibitem[{Osher and Santosa(2001)}]{osher2001level}
\bibinfo{author}{S.~J. Osher}, \bibinfo{author}{F.~Santosa},
\newblock \bibinfo{title}{Level set methods for optimization problems involving
  geometry and constraints: I. frequencies of a two-density inhomogeneous
  drum},
\newblock \bibinfo{journal}{Journal of Computational Physics}
  \bibinfo{volume}{171} (\bibinfo{year}{2001}) \bibinfo{pages}{272--288}.
%Type = Article
\bibitem[{Allaire et~al.(2002)Allaire, Jouve, and Toader}]{allaire2002level}
\bibinfo{author}{G.~Allaire}, \bibinfo{author}{F.~Jouve},
  \bibinfo{author}{A.-M. Toader},
\newblock \bibinfo{title}{A level-set method for shape optimization},
\newblock \bibinfo{journal}{Comptes Rendus Mathematique} \bibinfo{volume}{334}
  (\bibinfo{year}{2002}) \bibinfo{pages}{1125--1130}.
%Type = Article
\bibitem[{Wang et~al.(2007)Wang, Lim, Khoo, and Wang}]{wang2007extended}
\bibinfo{author}{S.~Wang}, \bibinfo{author}{K.~M. Lim}, \bibinfo{author}{B.~C.
  Khoo}, \bibinfo{author}{M.~Y. Wang},
\newblock \bibinfo{title}{An extended level set method for shape and topology
  optimization},
\newblock \bibinfo{journal}{Journal of Computational Physics}
  \bibinfo{volume}{221} (\bibinfo{year}{2007}) \bibinfo{pages}{395--421}.
%Type = Article
\bibitem[{Van~Dijk et~al.(2013)Van~Dijk, Maute, Langelaar, and
  Van~Keulen}]{van2013level}
\bibinfo{author}{N.~P. Van~Dijk}, \bibinfo{author}{K.~Maute},
  \bibinfo{author}{M.~Langelaar}, \bibinfo{author}{F.~Van~Keulen},
\newblock \bibinfo{title}{Level-set methods for structural topology
  optimization: a review},
\newblock \bibinfo{journal}{Structural and Multidisciplinary Optimization}
  \bibinfo{volume}{48} (\bibinfo{year}{2013}) \bibinfo{pages}{437--472}.
%Type = Article
\bibitem[{Peng et~al.(1999)Peng, Merriman, Osher, Zhao, and Kang}]{peng1999pde}
\bibinfo{author}{D.~Peng}, \bibinfo{author}{B.~Merriman},
  \bibinfo{author}{S.~Osher}, \bibinfo{author}{H.~Zhao},
  \bibinfo{author}{M.~Kang},
\newblock \bibinfo{title}{A pde-based fast local level set method},
\newblock \bibinfo{journal}{Journal of Computational Physics}
  \bibinfo{volume}{155} (\bibinfo{year}{1999}) \bibinfo{pages}{410--438}.
%Type = Article
\bibitem[{Min(2010)}]{min2010reinitializing}
\bibinfo{author}{C.~Min},
\newblock \bibinfo{title}{On reinitializing level set functions},
\newblock \bibinfo{journal}{Journal of computational physics}
  \bibinfo{volume}{229} (\bibinfo{year}{2010}) \bibinfo{pages}{2764--2772}.
%Type = Article
\bibitem[{Sethian(1996)}]{sethian1996fast}
\bibinfo{author}{J.~A. Sethian},
\newblock \bibinfo{title}{A fast marching level set method for monotonically
  advancing fronts.},
\newblock \bibinfo{journal}{Proceedings of the National Academy of Sciences}
  \bibinfo{volume}{93} (\bibinfo{year}{1996}) \bibinfo{pages}{1591--1595}.
%Type = Article
\bibitem[{Kimmel and Sethian(1998)}]{kimmel1998computing}
\bibinfo{author}{R.~Kimmel}, \bibinfo{author}{J.~A. Sethian},
\newblock \bibinfo{title}{Computing geodesic paths on manifolds},
\newblock \bibinfo{journal}{Proceedings of the national academy of Sciences}
  \bibinfo{volume}{95} (\bibinfo{year}{1998}) \bibinfo{pages}{8431--8435}.
%Type = Article
\bibitem[{Sethian and Vladimirsky(2000)}]{sethian2000fast}
\bibinfo{author}{J.~A. Sethian}, \bibinfo{author}{A.~Vladimirsky},
\newblock \bibinfo{title}{Fast methods for the eikonal and related
  hamilton--jacobi equations on unstructured meshes},
\newblock \bibinfo{journal}{Proceedings of the National Academy of Sciences}
  \bibinfo{volume}{97} (\bibinfo{year}{2000}) \bibinfo{pages}{5699--5703}.
%Type = Article
\bibitem[{Hassouna and Farag(2007)}]{hassouna2007multistencils}
\bibinfo{author}{M.~S. Hassouna}, \bibinfo{author}{A.~A. Farag},
\newblock \bibinfo{title}{Multistencils fast marching methods: A highly
  accurate solution to the eikonal equation on cartesian domains},
\newblock \bibinfo{journal}{IEEE transactions on pattern analysis and machine
  intelligence} \bibinfo{volume}{29} (\bibinfo{year}{2007})
  \bibinfo{pages}{1563--1574}.
%Type = Article
\bibitem[{Yang and Stern(2017)}]{yang2017highly}
\bibinfo{author}{J.~Yang}, \bibinfo{author}{F.~Stern},
\newblock \bibinfo{title}{A highly scalable massively parallel fast marching
  method for the eikonal equation},
\newblock \bibinfo{journal}{Journal of Computational Physics}
  \bibinfo{volume}{332} (\bibinfo{year}{2017}) \bibinfo{pages}{333--362}.
%Type = Article
\bibitem[{Zhao(2005)}]{zhao2005fast}
\bibinfo{author}{H.~Zhao},
\newblock \bibinfo{title}{A fast sweeping method for eikonal equations},
\newblock \bibinfo{journal}{Mathematics of computation} \bibinfo{volume}{74}
  (\bibinfo{year}{2005}) \bibinfo{pages}{603--627}.
%Type = Article
\bibitem[{Qian et~al.(2007)Qian, Zhang, and Zhao}]{qian2007fast}
\bibinfo{author}{J.~Qian}, \bibinfo{author}{Y.-T. Zhang},
  \bibinfo{author}{H.-K. Zhao},
\newblock \bibinfo{title}{Fast sweeping methods for eikonal equations on
  triangular meshes},
\newblock \bibinfo{journal}{SIAM Journal on Numerical Analysis}
  \bibinfo{volume}{45} (\bibinfo{year}{2007}) \bibinfo{pages}{83--107}.
%Type = Article
\bibitem[{Li et~al.(2008)Li, Shu, Zhang, and Zhao}]{li2008second}
\bibinfo{author}{F.~Li}, \bibinfo{author}{C.-W. Shu}, \bibinfo{author}{Y.-T.
  Zhang}, \bibinfo{author}{H.~Zhao},
\newblock \bibinfo{title}{A second order discontinuous galerkin fast sweeping
  method for eikonal equations},
\newblock \bibinfo{journal}{Journal of Computational Physics}
  \bibinfo{volume}{227} (\bibinfo{year}{2008}) \bibinfo{pages}{8191--8208}.
%Type = Article
\bibitem[{Sussman et~al.(1994)Sussman, Smereka, and Osher}]{sussman1994level}
\bibinfo{author}{M.~Sussman}, \bibinfo{author}{P.~Smereka},
  \bibinfo{author}{S.~Osher},
\newblock \bibinfo{title}{A level set approach for computing solutions to
  incompressible two-phase flow},
\newblock \bibinfo{journal}{Journal of Computational physics}
  \bibinfo{volume}{114} (\bibinfo{year}{1994}) \bibinfo{pages}{146--159}.
%Type = Article
\bibitem[{Lee et~al.(2017)Lee, Darbon, Osher, and Kang}]{lee2017revisiting}
\bibinfo{author}{B.~Lee}, \bibinfo{author}{J.~Darbon},
  \bibinfo{author}{S.~Osher}, \bibinfo{author}{M.~Kang},
\newblock \bibinfo{title}{Revisiting the redistancing problem using the
  hopf--lax formula},
\newblock \bibinfo{journal}{Journal of Computational Physics}
  \bibinfo{volume}{330} (\bibinfo{year}{2017}) \bibinfo{pages}{268--281}.
%Type = Article
\bibitem[{Barles et~al.(1993)Barles, Soner, and Souganidis}]{barles1993front}
\bibinfo{author}{G.~Barles}, \bibinfo{author}{H.~M. Soner},
  \bibinfo{author}{P.~E. Souganidis},
\newblock \bibinfo{title}{Front propagation and phase field theory},
\newblock \bibinfo{journal}{SIAM Journal on Control and Optimization}
  \bibinfo{volume}{31} (\bibinfo{year}{1993}) \bibinfo{pages}{439--469}.
%Type = Inproceedings
\bibitem[{Belyaev and Fayolle(2015)}]{belyaev2015variational}
\bibinfo{author}{A.~G. Belyaev}, \bibinfo{author}{P.-A. Fayolle},
\newblock \bibinfo{title}{On variational and pde-based distance function
  approximations},
\newblock in: \bibinfo{booktitle}{Computer Graphics Forum},
  volume~\bibinfo{volume}{34}, \bibinfo{organization}{Wiley Online Library},
  \bibinfo{year}{2015}, pp. \bibinfo{pages}{104--118}.
%Type = Article
\bibitem[{Varadhan(1967)}]{varadhan1967behavior}
\bibinfo{author}{S.~R.~S. Varadhan},
\newblock \bibinfo{title}{On the behavior of the fundamental solution of the
  heat equation with variable coefficients},
\newblock \bibinfo{journal}{Communications on Pure and Applied Mathematics}
  \bibinfo{volume}{20} (\bibinfo{year}{1967}) \bibinfo{pages}{431--455}.
%Type = Article
\bibitem[{Simonyan and Zisserman(2014)}]{simonyan2014very}
\bibinfo{author}{K.~Simonyan}, \bibinfo{author}{A.~Zisserman},
\newblock \bibinfo{title}{Very deep convolutional networks for large-scale
  image recognition},
\newblock \bibinfo{journal}{arXiv preprint arXiv:1409.1556}
  (\bibinfo{year}{2014}).
%Type = Inproceedings
\bibitem[{Nath et~al.(2014)Nath, Mishra, Kar, Chakraborty, and
  Dey}]{nath2014survey}
\bibinfo{author}{S.~S. Nath}, \bibinfo{author}{G.~Mishra},
  \bibinfo{author}{J.~Kar}, \bibinfo{author}{S.~Chakraborty},
  \bibinfo{author}{N.~Dey},
\newblock \bibinfo{title}{A survey of image classification methods and
  techniques},
\newblock in: \bibinfo{booktitle}{2014 International conference on control,
  instrumentation, communication and computational technologies (ICCICCT)},
  \bibinfo{organization}{IEEE}, \bibinfo{year}{2014}, pp.
  \bibinfo{pages}{554--557}.
%Type = Inproceedings
\bibitem[{He et~al.(2016)He, Zhang, Ren, and Sun}]{he2016deep}
\bibinfo{author}{K.~He}, \bibinfo{author}{X.~Zhang}, \bibinfo{author}{S.~Ren},
  \bibinfo{author}{J.~Sun},
\newblock \bibinfo{title}{Deep residual learning for image recognition},
\newblock in: \bibinfo{booktitle}{Proceedings of the IEEE conference on
  computer vision and pattern recognition}, \bibinfo{year}{2016}, pp.
  \bibinfo{pages}{770--778}.
%Type = Inproceedings
\bibitem[{Choi et~al.(2019)Choi, Seo, Im, and Kang}]{choi2019attention}
\bibinfo{author}{J.~Choi}, \bibinfo{author}{H.~Seo}, \bibinfo{author}{S.~Im},
  \bibinfo{author}{M.~Kang},
\newblock \bibinfo{title}{Attention routing between capsules},
\newblock in: \bibinfo{booktitle}{Proceedings of the IEEE/CVF International
  Conference on Computer Vision Workshops}, \bibinfo{year}{2019}, pp.
  \bibinfo{pages}{0--0}.
%Type = Article
\bibitem[{Dosovitskiy et~al.(2020)Dosovitskiy, Beyer, Kolesnikov, Weissenborn,
  Zhai, Unterthiner, Dehghani, Minderer, Heigold, Gelly
  et~al.}]{dosovitskiy2020image}
\bibinfo{author}{A.~Dosovitskiy}, \bibinfo{author}{L.~Beyer},
  \bibinfo{author}{A.~Kolesnikov}, \bibinfo{author}{D.~Weissenborn},
  \bibinfo{author}{X.~Zhai}, \bibinfo{author}{T.~Unterthiner},
  \bibinfo{author}{M.~Dehghani}, \bibinfo{author}{M.~Minderer},
  \bibinfo{author}{G.~Heigold}, \bibinfo{author}{S.~Gelly}, et~al.,
\newblock \bibinfo{title}{An image is worth 16x16 words: Transformers for image
  recognition at scale},
\newblock \bibinfo{journal}{arXiv preprint arXiv:2010.11929}
  (\bibinfo{year}{2020}).
%Type = Article
\bibitem[{Kingma and Welling(2014)}]{kingma2013auto}
\bibinfo{author}{D.~P. Kingma}, \bibinfo{author}{M.~Welling},
\newblock \bibinfo{title}{Auto-encoding variational bayes},
\newblock \bibinfo{journal}{International Conference on Machine Learning}
  (\bibinfo{year}{2014}).
%Type = Article
\bibitem[{Goodfellow et~al.(2020)Goodfellow, Pouget-Abadie, Mirza, Xu,
  Warde-Farley, Ozair, Courville, and Bengio}]{goodfellow2020generative}
\bibinfo{author}{I.~Goodfellow}, \bibinfo{author}{J.~Pouget-Abadie},
  \bibinfo{author}{M.~Mirza}, \bibinfo{author}{B.~Xu},
  \bibinfo{author}{D.~Warde-Farley}, \bibinfo{author}{S.~Ozair},
  \bibinfo{author}{A.~Courville}, \bibinfo{author}{Y.~Bengio},
\newblock \bibinfo{title}{Generative adversarial networks},
\newblock \bibinfo{journal}{Communications of the ACM} \bibinfo{volume}{63}
  (\bibinfo{year}{2020}) \bibinfo{pages}{139--144}.
%Type = Article
\bibitem[{Song et~al.(2021)Song, Sohl-Dickstein, Kingma, Kumar, Ermon, and
  Poole}]{song2020score}
\bibinfo{author}{Y.~Song}, \bibinfo{author}{J.~Sohl-Dickstein},
  \bibinfo{author}{D.~P. Kingma}, \bibinfo{author}{A.~Kumar},
  \bibinfo{author}{S.~Ermon}, \bibinfo{author}{B.~Poole},
\newblock \bibinfo{title}{Score-based generative modeling through stochastic
  differential equations},
\newblock \bibinfo{journal}{The International Conference on Learning
  Representations}  (\bibinfo{year}{2021}).
%Type = Article
\bibitem[{Chen et~al.(2018)Chen, Rubanova, Bettencourt, and
  Duvenaud}]{chen2018neural}
\bibinfo{author}{R.~T. Chen}, \bibinfo{author}{Y.~Rubanova},
  \bibinfo{author}{J.~Bettencourt}, \bibinfo{author}{D.~K. Duvenaud},
\newblock \bibinfo{title}{Neural ordinary differential equations},
\newblock \bibinfo{journal}{Advances in neural information processing systems}
  \bibinfo{volume}{31} (\bibinfo{year}{2018}).
%Type = Article
\bibitem[{Lee and Kang(1990)}]{lee1990neural}
\bibinfo{author}{H.~Lee}, \bibinfo{author}{I.~S. Kang},
\newblock \bibinfo{title}{Neural algorithm for solving differential equations},
\newblock \bibinfo{journal}{Journal of Computational Physics}
  \bibinfo{volume}{91} (\bibinfo{year}{1990}) \bibinfo{pages}{110--131}.
%Type = Article
\bibitem[{Lagaris et~al.(1998)Lagaris, Likas, and
  Fotiadis}]{lagaris1998artificial}
\bibinfo{author}{I.~E. Lagaris}, \bibinfo{author}{A.~Likas},
  \bibinfo{author}{D.~I. Fotiadis},
\newblock \bibinfo{title}{Artificial neural networks for solving ordinary and
  partial differential equations},
\newblock \bibinfo{journal}{IEEE transactions on neural networks}
  \bibinfo{volume}{9} (\bibinfo{year}{1998}) \bibinfo{pages}{987--1000}.
%Type = Article
\bibitem[{Raissi et~al.(2019)Raissi, Perdikaris, and
  Karniadakis}]{raissi2019physics}
\bibinfo{author}{M.~Raissi}, \bibinfo{author}{P.~Perdikaris},
  \bibinfo{author}{G.~E. Karniadakis},
\newblock \bibinfo{title}{Physics-informed neural networks: A deep learning
  framework for solving forward and inverse problems involving nonlinear
  partial differential equations},
\newblock \bibinfo{journal}{Journal of Computational physics}
  \bibinfo{volume}{378} (\bibinfo{year}{2019}) \bibinfo{pages}{686--707}.
%Type = Article
\bibitem[{Jin et~al.(2021)Jin, Cai, Li, and Karniadakis}]{jin2021nsfnets}
\bibinfo{author}{X.~Jin}, \bibinfo{author}{S.~Cai}, \bibinfo{author}{H.~Li},
  \bibinfo{author}{G.~E. Karniadakis},
\newblock \bibinfo{title}{Nsfnets (navier-stokes flow nets): Physics-informed
  neural networks for the incompressible navier-stokes equations},
\newblock \bibinfo{journal}{Journal of Computational Physics}
  \bibinfo{volume}{426} (\bibinfo{year}{2021}) \bibinfo{pages}{109951}.
%Type = Article
\bibitem[{Hu et~al.(2022)Hu, Lin, and Lai}]{hu2022discontinuity}
\bibinfo{author}{W.-F. Hu}, \bibinfo{author}{T.-S. Lin}, \bibinfo{author}{M.-C.
  Lai},
\newblock \bibinfo{title}{A discontinuity capturing shallow neural network for
  elliptic interface problems},
\newblock \bibinfo{journal}{Journal of Computational Physics}
  \bibinfo{volume}{469} (\bibinfo{year}{2022}) \bibinfo{pages}{111576}.
%Type = Article
\bibitem[{Patel et~al.(2022)Patel, Manickam, Trask, Wood, Lee, Tomas, and
  Cyr}]{patel2022thermodynamically}
\bibinfo{author}{R.~G. Patel}, \bibinfo{author}{I.~Manickam},
  \bibinfo{author}{N.~A. Trask}, \bibinfo{author}{M.~A. Wood},
  \bibinfo{author}{M.~Lee}, \bibinfo{author}{I.~Tomas}, \bibinfo{author}{E.~C.
  Cyr},
\newblock \bibinfo{title}{Thermodynamically consistent physics-informed neural
  networks for hyperbolic systems},
\newblock \bibinfo{journal}{Journal of Computational Physics}
  \bibinfo{volume}{449} (\bibinfo{year}{2022}) \bibinfo{pages}{110754}.
%Type = Article
\bibitem[{Wang et~al.(2021)Wang, Teng, and Perdikaris}]{wang2021understanding}
\bibinfo{author}{S.~Wang}, \bibinfo{author}{Y.~Teng},
  \bibinfo{author}{P.~Perdikaris},
\newblock \bibinfo{title}{Understanding and mitigating gradient flow
  pathologies in physics-informed neural networks},
\newblock \bibinfo{journal}{SIAM Journal on Scientific Computing}
  \bibinfo{volume}{43} (\bibinfo{year}{2021}) \bibinfo{pages}{A3055--A3081}.
%Type = Article
\bibitem[{Krishnapriyan et~al.(2021)Krishnapriyan, Gholami, Zhe, Kirby, and
  Mahoney}]{krishnapriyan2021characterizing}
\bibinfo{author}{A.~Krishnapriyan}, \bibinfo{author}{A.~Gholami},
  \bibinfo{author}{S.~Zhe}, \bibinfo{author}{R.~Kirby}, \bibinfo{author}{M.~W.
  Mahoney},
\newblock \bibinfo{title}{Characterizing possible failure modes in
  physics-informed neural networks},
\newblock \bibinfo{journal}{Advances in Neural Information Processing Systems}
  \bibinfo{volume}{34} (\bibinfo{year}{2021}) \bibinfo{pages}{26548--26560}.
%Type = Article
\bibitem[{Fuks and Tchelepi(2020)}]{fuks2020limitations}
\bibinfo{author}{O.~Fuks}, \bibinfo{author}{H.~A. Tchelepi},
\newblock \bibinfo{title}{Limitations of physics informed machine learning for
  nonlinear two-phase transport in porous media},
\newblock \bibinfo{journal}{Journal of Machine Learning for Modeling and
  Computing} \bibinfo{volume}{1} (\bibinfo{year}{2020}).
%Type = Article
\bibitem[{Wang et~al.(2022)Wang, Yu, and Perdikaris}]{wang2022and}
\bibinfo{author}{S.~Wang}, \bibinfo{author}{X.~Yu},
  \bibinfo{author}{P.~Perdikaris},
\newblock \bibinfo{title}{When and why pinns fail to train: A neural tangent
  kernel perspective},
\newblock \bibinfo{journal}{Journal of Computational Physics}
  \bibinfo{volume}{449} (\bibinfo{year}{2022}) \bibinfo{pages}{110768}.
%Type = Article
\bibitem[{Lu et~al.(2019)Lu, Jin, and Karniadakis}]{lu2019deeponet}
\bibinfo{author}{L.~Lu}, \bibinfo{author}{P.~Jin}, \bibinfo{author}{G.~E.
  Karniadakis},
\newblock \bibinfo{title}{Deeponet: Learning nonlinear operators for
  identifying differential equations based on the universal approximation
  theorem of operators},
\newblock \bibinfo{journal}{arXiv preprint arXiv:1910.03193}
  (\bibinfo{year}{2019}).
%Type = Article
\bibitem[{Li et~al.(2021)Li, Kovachki, Azizzadenesheli, Liu, Bhattacharya,
  Stuart, and Anandkumar}]{li2020fourier}
\bibinfo{author}{Z.~Li}, \bibinfo{author}{N.~Kovachki},
  \bibinfo{author}{K.~Azizzadenesheli}, \bibinfo{author}{B.~Liu},
  \bibinfo{author}{K.~Bhattacharya}, \bibinfo{author}{A.~Stuart},
  \bibinfo{author}{A.~Anandkumar},
\newblock \bibinfo{title}{Fourier neural operator for parametric partial
  differential equations},
\newblock \bibinfo{journal}{The International Conference on Learning
  Representations}  (\bibinfo{year}{2021}).
%Type = Article
\bibitem[{Kovachki et~al.(2021)Kovachki, Li, Liu, Azizzadenesheli,
  Bhattacharya, Stuart, and Anandkumar}]{kovachki2021neural}
\bibinfo{author}{N.~Kovachki}, \bibinfo{author}{Z.~Li},
  \bibinfo{author}{B.~Liu}, \bibinfo{author}{K.~Azizzadenesheli},
  \bibinfo{author}{K.~Bhattacharya}, \bibinfo{author}{A.~Stuart},
  \bibinfo{author}{A.~Anandkumar},
\newblock \bibinfo{title}{Neural operator: Learning maps between function
  spaces},
\newblock \bibinfo{journal}{arXiv preprint arXiv:2108.08481}
  (\bibinfo{year}{2021}).
%Type = Article
\bibitem[{Wen et~al.(2022)Wen, Li, Azizzadenesheli, Anandkumar, and
  Benson}]{wen2022u}
\bibinfo{author}{G.~Wen}, \bibinfo{author}{Z.~Li},
  \bibinfo{author}{K.~Azizzadenesheli}, \bibinfo{author}{A.~Anandkumar},
  \bibinfo{author}{S.~M. Benson},
\newblock \bibinfo{title}{U-fno—an enhanced fourier neural operator-based
  deep-learning model for multiphase flow},
\newblock \bibinfo{journal}{Advances in Water Resources} \bibinfo{volume}{163}
  (\bibinfo{year}{2022}) \bibinfo{pages}{104180}.
%Type = Article
\bibitem[{Lu et~al.(2022)Lu, Meng, Cai, Mao, Goswami, Zhang, and
  Karniadakis}]{lu2022comprehensive}
\bibinfo{author}{L.~Lu}, \bibinfo{author}{X.~Meng}, \bibinfo{author}{S.~Cai},
  \bibinfo{author}{Z.~Mao}, \bibinfo{author}{S.~Goswami},
  \bibinfo{author}{Z.~Zhang}, \bibinfo{author}{G.~E. Karniadakis},
\newblock \bibinfo{title}{A comprehensive and fair comparison of two neural
  operators (with practical extensions) based on fair data},
\newblock \bibinfo{journal}{Computer Methods in Applied Mechanics and
  Engineering} \bibinfo{volume}{393} (\bibinfo{year}{2022})
  \bibinfo{pages}{114778}.
%Type = Article
\bibitem[{Fang(2021)}]{fang2021high}
\bibinfo{author}{Z.~Fang},
\newblock \bibinfo{title}{A high-efficient hybrid physics-informed neural
  networks based on convolutional neural network},
\newblock \bibinfo{journal}{IEEE Transactions on Neural Networks and Learning
  Systems} \bibinfo{volume}{33} (\bibinfo{year}{2021})
  \bibinfo{pages}{5514--5526}.
%Type = Article
\bibitem[{Qiu and Yan(2021)}]{qiu2021cell}
\bibinfo{author}{C.~Qiu}, \bibinfo{author}{J.~Yan},
\newblock \bibinfo{title}{Cell-average based neural network method for
  hyperbolic and parabolic partial differential equations},
\newblock \bibinfo{journal}{arXiv preprint arXiv:2107.00813}
  (\bibinfo{year}{2021}).
%Type = Article
\bibitem[{Lienen and G{\"u}nnemann(2022)}]{lienen2022learning}
\bibinfo{author}{M.~Lienen}, \bibinfo{author}{S.~G{\"u}nnemann},
\newblock \bibinfo{title}{Learning the dynamics of physical systems from sparse
  observations with finite element networks},
\newblock \bibinfo{journal}{The International Conference on Learning
  Representations}  (\bibinfo{year}{2022}).
%Type = Article
\bibitem[{Karlbauer et~al.(2022)Karlbauer, Praditia, Otte, Oladyshkin, Nowak,
  and Butz}]{karlbauer2022composing}
\bibinfo{author}{M.~Karlbauer}, \bibinfo{author}{T.~Praditia},
  \bibinfo{author}{S.~Otte}, \bibinfo{author}{S.~Oladyshkin},
  \bibinfo{author}{W.~Nowak}, \bibinfo{author}{M.~V. Butz},
\newblock \bibinfo{title}{Composing partial differential equations with
  physics-aware neural networks}  (\bibinfo{year}{2022})
  \bibinfo{pages}{10773--10801}.
%Type = Article
\bibitem[{Wang et~al.(2008)Wang, Yang, Yin, and Zhang}]{wang2008new}
\bibinfo{author}{Y.~Wang}, \bibinfo{author}{J.~Yang}, \bibinfo{author}{W.~Yin},
  \bibinfo{author}{Y.~Zhang},
\newblock \bibinfo{title}{A new alternating minimization algorithm for total
  variation image reconstruction},
\newblock \bibinfo{journal}{SIAM Journal on Imaging Sciences}
  \bibinfo{volume}{1} (\bibinfo{year}{2008}) \bibinfo{pages}{248--272}.
%Type = Article
\bibitem[{Goldstein and Osher(2009)}]{goldstein2009split}
\bibinfo{author}{T.~Goldstein}, \bibinfo{author}{S.~Osher},
\newblock \bibinfo{title}{The split bregman method for l1-regularized
  problems},
\newblock \bibinfo{journal}{SIAM journal on imaging sciences}
  \bibinfo{volume}{2} (\bibinfo{year}{2009}) \bibinfo{pages}{323--343}.
%Type = Article
\bibitem[{Crandall and Lions(1984)}]{crandall1984two}
\bibinfo{author}{M.~G. Crandall}, \bibinfo{author}{P.-L. Lions},
\newblock \bibinfo{title}{Two approximations of solutions of hamilton-jacobi
  equations},
\newblock \bibinfo{journal}{Mathematics of computation} \bibinfo{volume}{43}
  (\bibinfo{year}{1984}) \bibinfo{pages}{1--19}.
%Type = Inproceedings
\bibitem[{Li et~al.(2005)Li, Xu, Gui, and Fox}]{li2005level}
\bibinfo{author}{C.~Li}, \bibinfo{author}{C.~Xu}, \bibinfo{author}{C.~Gui},
  \bibinfo{author}{M.~D. Fox},
\newblock \bibinfo{title}{Level set evolution without re-initialization: a new
  variational formulation},
\newblock in: \bibinfo{booktitle}{2005 IEEE computer society conference on
  computer vision and pattern recognition (CVPR'05)},
  volume~\bibinfo{volume}{1}, \bibinfo{organization}{IEEE},
  \bibinfo{year}{2005}, pp. \bibinfo{pages}{430--436}.
%Type = Incollection
\bibitem[{Xin et~al.(2012)Xin, Quynh, Ying, and He}]{xin2012global}
\bibinfo{author}{S.-Q. Xin}, \bibinfo{author}{D.~T. Quynh},
  \bibinfo{author}{X.~Ying}, \bibinfo{author}{Y.~He},
\newblock \bibinfo{title}{A global algorithm to compute defect-tolerant
  geodesic distance},
\newblock in: \bibinfo{booktitle}{SIGGRAPH Asia 2012 Technical Briefs},
  \bibinfo{year}{2012}, pp. \bibinfo{pages}{1--4}.
%Type = Inproceedings
\bibitem[{Alblas et~al.(2023)Alblas, Brune, Yeung, and
  Wolterink}]{alblas2023going}
\bibinfo{author}{D.~Alblas}, \bibinfo{author}{C.~Brune}, \bibinfo{author}{K.~K.
  Yeung}, \bibinfo{author}{J.~M. Wolterink},
\newblock \bibinfo{title}{Going off-grid: Continuous implicit neural
  representations for 3d vascular modeling},
\newblock in: \bibinfo{booktitle}{Statistical Atlases and Computational Models
  of the Heart. Regular and CMRxMotion Challenge Papers: 13th International
  Workshop, STACOM 2022, Held in Conjunction with MICCAI 2022, Singapore,
  September 18, 2022, Revised Selected Papers},
  \bibinfo{organization}{Springer}, \bibinfo{year}{2023}, pp.
  \bibinfo{pages}{79--90}.
%Type = Article
\bibitem[{Basting and Kuzmin(2013)}]{basting2013minimization}
\bibinfo{author}{C.~Basting}, \bibinfo{author}{D.~Kuzmin},
\newblock \bibinfo{title}{A minimization-based finite element formulation for
  interface-preserving level set reinitialization},
\newblock \bibinfo{journal}{Computing} \bibinfo{volume}{95}
  (\bibinfo{year}{2013}) \bibinfo{pages}{13--25}.
%Type = Article
\bibitem[{Belyaev and Fayolle(2020)}]{belyaev2020admm}
\bibinfo{author}{A.~Belyaev}, \bibinfo{author}{P.-A. Fayolle},
\newblock \bibinfo{title}{An admm-based scheme for distance function
  approximation},
\newblock \bibinfo{journal}{Numerical Algorithms} \bibinfo{volume}{84}
  (\bibinfo{year}{2020}) \bibinfo{pages}{983--996}.
%Type = Article
\bibitem[{Boyd et~al.(2011)Boyd, Parikh, Chu, Peleato, Eckstein
  et~al.}]{boyd2011distributed}
\bibinfo{author}{S.~Boyd}, \bibinfo{author}{N.~Parikh},
  \bibinfo{author}{E.~Chu}, \bibinfo{author}{B.~Peleato},
  \bibinfo{author}{J.~Eckstein}, et~al.,
\newblock \bibinfo{title}{Distributed optimization and statistical learning via
  the alternating direction method of multipliers},
\newblock \bibinfo{journal}{Foundations and Trends{\textregistered} in Machine
  learning} \bibinfo{volume}{3} (\bibinfo{year}{2011}) \bibinfo{pages}{1--122}.
%Type = Inproceedings
\bibitem[{Lichtenstein et~al.(2019)Lichtenstein, Pai, and
  Kimmel}]{lichtenstein2019deep}
\bibinfo{author}{M.~Lichtenstein}, \bibinfo{author}{G.~Pai},
  \bibinfo{author}{R.~Kimmel},
\newblock \bibinfo{title}{Deep eikonal solvers},
\newblock in: \bibinfo{booktitle}{Scale Space and Variational Methods in
  Computer Vision: 7th International Conference, SSVM 2019, Hofgeismar,
  Germany, June 30--July 4, 2019, Proceedings 7},
  \bibinfo{organization}{Springer}, \bibinfo{year}{2019}, pp.
  \bibinfo{pages}{38--50}.
%Type = Article
\bibitem[{Fayolle(2021)}]{fayolle2021signed}
\bibinfo{author}{P.-A. Fayolle},
\newblock \bibinfo{title}{Signed distance function computation from an implicit
  surface},
\newblock \bibinfo{journal}{arXiv preprint arXiv:2104.08057}
  (\bibinfo{year}{2021}).
%Type = Article
\bibitem[{bin Waheed et~al.(2021)bin Waheed, Haghighat, Alkhalifah, Song, and
  Hao}]{bin2021pinneik}
\bibinfo{author}{U.~bin Waheed}, \bibinfo{author}{E.~Haghighat},
  \bibinfo{author}{T.~Alkhalifah}, \bibinfo{author}{C.~Song},
  \bibinfo{author}{Q.~Hao},
\newblock \bibinfo{title}{Pinneik: Eikonal solution using physics-informed
  neural networks},
\newblock \bibinfo{journal}{Computers \& Geosciences} \bibinfo{volume}{155}
  (\bibinfo{year}{2021}) \bibinfo{pages}{104833}.
%Type = Article
\bibitem[{Manfredi(1989)}]{manfredilimits}
\bibinfo{author}{T.~B.-E. D.-J. Manfredi},
\newblock \bibinfo{title}{Limits as $p\rightarrow \infty$ of $\triangle_pu_p=f$
  and related external provlems}  (\bibinfo{year}{1989}).
%Type = Article
\bibitem[{Paszke et~al.(2017)Paszke, Gross, Chintala, Chanan, Yang, DeVito,
  Lin, Desmaison, Antiga, and Lerer}]{paszke2017automatic}
\bibinfo{author}{A.~Paszke}, \bibinfo{author}{S.~Gross},
  \bibinfo{author}{S.~Chintala}, \bibinfo{author}{G.~Chanan},
  \bibinfo{author}{E.~Yang}, \bibinfo{author}{Z.~DeVito},
  \bibinfo{author}{Z.~Lin}, \bibinfo{author}{A.~Desmaison},
  \bibinfo{author}{L.~Antiga}, \bibinfo{author}{A.~Lerer},
\newblock \bibinfo{title}{Automatic differentiation in pytorch}
  (\bibinfo{year}{2017}).
%Type = Book
\bibitem[{Dugundji(1966)}]{dugundji1966topology}
\bibinfo{author}{J.~Dugundji}, \bibinfo{title}{Topology},
  \bibinfo{publisher}{Allyn and Bacon}, \bibinfo{year}{1966}.
%Type = Book
\bibitem[{Federer(2014)}]{federer2014geometric}
\bibinfo{author}{H.~Federer}, \bibinfo{title}{Geometric measure theory},
  \bibinfo{publisher}{Springer}, \bibinfo{year}{2014}.
%Type = Article
\bibitem[{Leung and Chow(1999)}]{leung1999adaptive}
\bibinfo{author}{C.-T. Leung}, \bibinfo{author}{T.~W. Chow},
\newblock \bibinfo{title}{Adaptive regularization parameter selection method
  for enhancing generalization capability of neural networks},
\newblock \bibinfo{journal}{Artificial Intelligence} \bibinfo{volume}{107}
  (\bibinfo{year}{1999}) \bibinfo{pages}{347--356}.
%Type = Article
\bibitem[{Xiang et~al.(2022)Xiang, Peng, Liu, and Yao}]{xiang2022self}
\bibinfo{author}{Z.~Xiang}, \bibinfo{author}{W.~Peng},
  \bibinfo{author}{X.~Liu}, \bibinfo{author}{W.~Yao},
\newblock \bibinfo{title}{Self-adaptive loss balanced physics-informed neural
  networks},
\newblock \bibinfo{journal}{Neurocomputing} \bibinfo{volume}{496}
  (\bibinfo{year}{2022}) \bibinfo{pages}{11--34}.
%Type = Inproceedings
\bibitem[{Arjovsky et~al.(2017)Arjovsky, Chintala, and
  Bottou}]{arjovsky2017wasserstein}
\bibinfo{author}{M.~Arjovsky}, \bibinfo{author}{S.~Chintala},
  \bibinfo{author}{L.~Bottou},
\newblock \bibinfo{title}{Wasserstein generative adversarial networks},
\newblock in: \bibinfo{booktitle}{International conference on machine
  learning}, \bibinfo{organization}{PMLR}, \bibinfo{year}{2017}, pp.
  \bibinfo{pages}{214--223}.
%Type = Book
\bibitem[{Sethian(1999)}]{sethian1999level}
\bibinfo{author}{J.~A. Sethian}, \bibinfo{title}{Level set methods and fast
  marching methods: evolving interfaces in computational geometry, fluid
  mechanics, computer vision, and materials science},
  volume~\bibinfo{volume}{3}, \bibinfo{publisher}{Cambridge university press},
  \bibinfo{year}{1999}.
%Type = Inproceedings
\bibitem[{Atzmon and Lipman(2020)}]{atzmon2020sal}
\bibinfo{author}{M.~Atzmon}, \bibinfo{author}{Y.~Lipman},
\newblock \bibinfo{title}{Sal: Sign agnostic learning of shapes from raw data},
\newblock in: \bibinfo{booktitle}{Proceedings of the IEEE/CVF Conference on
  Computer Vision and Pattern Recognition}, \bibinfo{year}{2020}, pp.
  \bibinfo{pages}{2565--2574}.
%Type = Article
\bibitem[{Kingma and Ba(2014)}]{kingma2014adam}
\bibinfo{author}{D.~P. Kingma}, \bibinfo{author}{J.~Ba},
\newblock \bibinfo{title}{Adam: A method for stochastic optimization},
\newblock \bibinfo{journal}{arXiv preprint arXiv:1412.6980}
  (\bibinfo{year}{2014}).

\end{thebibliography}

%% Authors are advised to submit their bibtex database files. They are
%% requested to list a bibtex style file in the manuscript if they do
%% not want to use model1-num-names.bst.

%% References without bibTeX database:

% \begin{thebibliography}{00}

%% \bibitem must have the following form:
%%   \bibitem{key}...
%%

% \bibitem{}

% \end{thebibliography}

\end{document}